\newtheorem{theorem}{Theorem} 
\newtheorem{lemma}[theorem]{Lemma}
\newtheorem{proposition}{Proposition}
\theoremstyle{definition}
\newtheorem{comp}[theorem]{Computational Result}
\newcommand{\xmark}{{\color{red}\ding{55}}}
\newcommand{\cmark}{{\color{blue}\ding{51}}}
\newcommand{\qmark}{{\fontfamily{cyklop}\selectfont \textit{?}}}
\newcommand{\tmark}{{\color{brown} $\thicksim$}}
\newcommand{\eg}{\textit{e.g.}}
\newcommand{\ie}{\textit{i.e.}}
\newcommand{\so}[1]{{\text{SO}\negmedspace\left(#1\right)}}
\newcommand{\SNR}{\operatorname{SNR}}
\title[Method of moments for \textit{ab initio} modeling] 
      {Method of moments for 3-D single particle \textit{ab initio} modeling with non-uniform distribution of viewing angles}
\author[Sharon, Kileel, Khoo, Landa and Singer]{}
\subjclass{Primary: 78M05, 90C26; Secondary: 14Q99.} 
 \keywords{cryo-EM, \textit{ab initio} modeling, autocorrelation analysis, method of moments, spherical harmonics, Wigner matrices, polynomial equations, non-convex optimization.}
 \email{nsharon@tauex.tau.ac.il}
 \email{jkileel@math.princeton.edu}
 \email{yuehaw.khoo@gmail.com}
 \email{sboris20@gmail.com}
 \email{amits@math.princeton.edu}
\thanks{$^*$Corresponding author: Nir Sharon.}  
\thanks{$^{\dagger}$The first two authors contributed equally.}
\begin{document}

\maketitle


\centerline{\scshape Nir Sharon$^{*, \dagger}$}
\medskip
{\footnotesize
 \centerline{School of Mathematical Sciences, Tel Aviv University}
} 

\medskip

\centerline{\scshape Joe Kileel$^{\dagger}$}
\medskip
{\footnotesize
 \centerline{Program in Applied and Computational Mathematics, Princeton University}}
   
   \medskip
 
 \centerline{\scshape Yuehaw Khoo}
\medskip
{\footnotesize
 \centerline{Department of Statistics and the College, The University of Chicago}}
   
   \medskip
   
    \centerline{\scshape Boris Landa}
\medskip
{\footnotesize
 \centerline{Applied Mathematics Program, Yale University}}
   
   \medskip
   
    \centerline{\scshape Amit Singer}
\medskip
{\footnotesize
 \centerline{Program in Applied and Computational Mathematics and Department of Mathematics, Princeton University}}

\bigskip

\begin{abstract} %
	Single-particle reconstruction in cryo-electron microscopy (cryo-EM) is an increasingly popular technique for determining the 3-D structure of a molecule from several noisy 2-D projections images taken at unknown viewing angles.  Most reconstruction algorithms require a low-resolution initialization for the 3-D structure, which is the goal of \textit{ab initio} modeling.  Suggested by Zvi Kam in 1980, the method of moments (MoM) offers one approach, wherein low-order statistics of the 2-D images are computed and a 3-D structure is estimated by solving a system of polynomial equations. Unfortunately, Kam's method suffers from restrictive assumptions, most notably that viewing angles should be distributed uniformly. Often unrealistic, uniformity entails the computation of higher-order correlations, as in this case first and second moments fail to determine the 3-D structure. In the present paper, we remove this hypothesis, by permitting an unknown, non-uniform distribution of viewing angles in MoM.  Perhaps surprisingly, we show that this case is \textit{statistically easier} than the uniform case, as now first and second moments generically suffice to determine low-resolution expansions of the molecule.  In the idealized setting of a known, non-uniform distribution, we find an efficient provable algorithm inverting first and second moments.  For unknown, non-uniform distributions, we use non-convex optimization methods to solve for both the molecule and distribution.
\end{abstract}	

\section{Introduction}

Single-particle cryo-electron microscopy (cryo-EM) is an imaging method for determining the high-resolution 3-D structure of biological macromolecules without crystallization~\cite{frank2006three, kuhlbrandt2014resolution}.  The reconstruction process in cryo-EM determines the 3-D structure of a molecule from its noisy 2-D tomographic projection images.  By virtue of the experimental setup, each projection image is taken at an unknown viewing direction and has a very high level of noise, due to the small electron dose one can apply to the specimen before inflicting severe radiation damage, \eg,~\cite{biyani2017focus, fischer2015structure, merk2016breaking}. The computational pipeline that leads from the raw data, given many large unsegmented micrographs of projections, to the 3-D model consists of the following stages. The first step is particle picking, in which 2-D projection images are selected from micrographs. The selected particle images typically undergo 2-D classification to assess data quality and further improve particle picking. At this point, the 3-D reconstruction process begins, where often it is divided into two substeps of low-resolution modeling and 3-D refinement.  In this paper, we focus on the mathematical aspects of the former, namely the modeling part. In particular, we suggest using the method of moments (MoM) for \textit{ab initio} modeling. We illustrate this workflow with an overview given in Figure~\ref{fig:schematic}.

The last step in the reconstruction, also known as the refinement step, aims to improve the resolution as much as possible. This refinement process is typically a variant of the expectation-maximization (EM) algorithm which seeks the maximum likelihood estimator (MLE) via an efficient implementation, \eg,~\cite{scheres2012relion}. As such, 3-D refinement requires an initial structure that is close to the correct target structure~\cite{grigorieff2007frealign, scheres2012bayesian}. Serving this purpose, an \textit{ab initio} model is the result of a reconstruction process which depends solely on the data at hand with no \textit{a priori} assumptions about the 3-D structure of the molecule~\cite{reboul2018single}. We remark that the two primary challenges for cryo-EM reconstruction are the high level of noise and the unknown viewing directions. Mathematically, without the presence of noise, the unknown viewing directions could be recovered using common lines~\cite{vainshtein1986determination, van1987angular}. Then, the 3-D structure follows, for example, by tomographic inversion, see, \eg,~\cite{anden2018structural}. Reliable detection of common lines is limited however to high signal-to-noise (SNR) ratio.  As a result, the application of common lines based approaches is often limited to 2-D class averages rather than the original raw images~\cite{singer2011three}. 
Other alternatives such as frequency marching~\cite{barnett2017} and optimization using stochastic gradient have been suggested~\cite{punjani2017cryosparc}.
As optimization processes are designed to minimize highly non-convex cost functions, methods like SGD are not guaranteed to succeed.  In addition, as in the case of EM, it is not \textit{a priori} clear how many images are required.

\begin{figure}[h]  
	\begin{center}
		\includegraphics[width=0.9\textwidth]{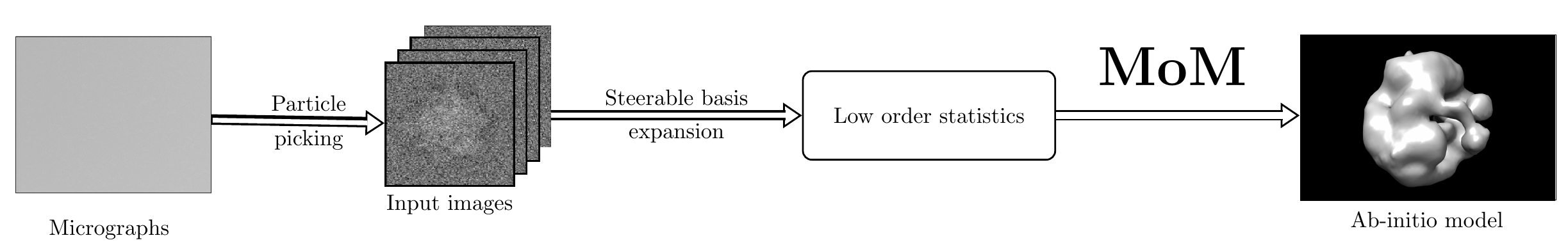}
	\end{center}	
	\caption{A schematic flowchart of 3-D reconstruction using method of moments (MoM).} 
	\label{fig:schematic}
\end{figure}

Approximately forty years ago, Zvi Kam proposed a method for 3-D \textit{ab initio} reconstruction based on computing the mean and covariance of the 2-D noisy images~\cite{kam1980reconstruction}. In order to uniquely determine the volume, the third moment (triple correlation) is also used besides the mean and covariance. In this approach, known as Kam's method, the 3-D volume is reconstructed without estimating the viewing directions. In this sense, Kam's method is strikingly different from common lines based approaches and maximum likelihood and other optimization methods that rely on orientation estimation for each image. 
 Crucially, Kam's method is effective at \textit{arbitrary} levels of noise, given sufficiently many picked particles for accurate estimation of the moment statistics. Additionally, Kam's method does not require any starting model, and it requires only one pass through the data to compute moments (contrary to other approaches needing access to the measurements multiple times). Despite the aforementioned advantages, Kam's method relies on the restrictive assumption that the viewing directions for the images are distributed uniformly over the sphere. This hypothesis, alongside other technical issues, has so far prevented a direct application of Kam's method to experimental cryo-EM data, for which viewing angles are typically non-uniform \cite{baldwin2019non, glaeser2017opinion, naydenova2017measuring, tan2017addressing}. 
This situation motivates us to explore generalizations of Kam's method better suited to cryo-EM data.\footnote{We remark that Kam's method, assuming uniform rotations, is of significant current interest in X-ray free electron laser (XFEL) single molecule imaging, where the assumption of uniformity more closely matches experimental reality \cite{donatelli2015, pande2014xfel, natureCommXfel}.}

In this paper, we generalize Kam's theory to the case of non-uniform distribution of viewing directions. We regard Kam's original approach with uniform distribution of viewing angles as a degenerate instance of MoM. In our formulation, we estimate both the 3-D structure and the unknown distribution of viewing angles jointly from the first two moments of the Fourier transformed images.  More precisely, for $n$ images $I_j,j=1,\ldots,n$, the first and second empirical moments of the Fourier transformed images, given in polar coordinates, $\widehat{I}_j(r,\varphi), j=1,\ldots,n$, are 
\begin{equation} \label{eqn:empirical_moments}
\widetilde{m}_1(r,\varphi) =  \frac{1}{n} \sum_{j=1}^n \widehat{I}_j(r,\varphi),\quad  \text{ and } \quad \widetilde{m}_2(r,\varphi,r',\varphi') = \frac{1}{n} \sum_{j=1}^n \widehat{I}_j(r,\varphi)\widehat{I}_j(r',\varphi'),
\end{equation}
which upon the above discretization become 2-D and 4-D tensors, respectively. Our basic rationale for trying to obtain the volume from the first two moments is as follows. Supposing the distribution of rotations of the \textit{image plane} to be uniform, then in the limit $n \rightarrow \infty$ the first moment is radially symmetric, that is, it is only a function of $r$ but is independent of $\varphi$.  Therefore, $\widetilde{m}_1$ may be regarded as a 1-D vector. Similarly, the second moment is a 3-D tensor (rather than 4-D) since it will only depend on $\varphi$ and $\varphi'$ through $\varphi-\varphi'$ as $n \rightarrow \infty$.  Also $I_j(r',\varphi')$ is linearly related to the molecule's volume via a tomographic projection. Thus, for images of size $N\times N$ pixels, the first and second moments should give rise to $\mathcal{O}(N^3)$ polynomial equations for the unknown volume and distribution. Assuming the volume is of size $N\times N \times N$ (and the distribution is of lower dimensionality), then the first and second moments have ``just'' the right number of equations (in terms of leading order) to determine the unknowns. Unfortunately, when the distribution of viewing directions is uniform, as noted by Kam~\cite{kam1980reconstruction}, the information encoded in the second moment is algebraically redundant; essentially it is the autocorrelation function (or equivalently, the power spectrum), and this information is insufficient for determining the structure of the molecule.  As we will see, a non-uniform distribution of viewing directions introduces additional terms in both the first and second moments, and extends the number of independent equations beyond the autocorrelation case.  In particular, we will show that non-uniformity guarantees uniqueness from the analytical counterparts of $\widetilde{m}_1$ and $\widetilde{m}_2$ in cases of a known distribution, and it guarantees finitely many solutions in other, more realistic, cases of an unknown distribution.

Our work is inspired by several earlier studies on simplified models in a setting called Multi-Reference Alignment (MRA). In MRA, a given group of transformations acts on a vector space of signals \cite{bandeira2017estimation}. For example, the group $\so{2}$ acts on the space of band-limited signals over the unit circle by rotating them counterclockwise (as a 1-D analog of cryo-EM). The task then is to estimate a ground truth signal from multiple noisy samples, corresponding to unknown group elements of a finite cyclic subgroup of $\so{2}$ acting on the signal.  The papers \cite{bandeira2017optimal, bendory2017bispectrum} show that for a uniform distribution over the group, the signal can be estimated from the third moment, and the number of samples required scales like the third power of the noise variance. On the other hand, for a non-uniform and also aperiodic distributions over the group, the signal can be estimated from the second moment, and the required number of samples scales quadratically with the noise variance~\cite{abbe2018multireference}. 

Despite the success of signal recovery in MRA from the first two moments under the action of the cyclic group, it is not apparent that such a strategy is still applicable in the case of cryo-EM. First, in cryo-EM, each image is obtained from the ground truth volume not just by applying a rotation in $\so{3}$, but also a tomographic projection. Moreover, the studies mentioned above (of MRA) consider finite abelian groups, whereas, in the case of cryo-EM, the group under consideration is the continuous non-commutative group $\so{3}$. The goal of this paper is then to investigate whether the first and second moment of the images is also sufficient for solving the inverse problem of structure determination in the cryo-EM setting.

\subsection{Our contribution}
We formulate the reconstruction problem in cryo-EM as an inverse problem of determining the volume and the distribution of viewing directions from the first two moments of the images. Assuming the volume and distribution are band-limited functions, they are discretized by Prolate Spheroidal Wave Functions (PSWFs) and Wigner matrices, respectively. The moments give rise to a polynomial system in which the unknowns are the coefficients of the volume and the distribution.  Using computational algebraic geometry techniques~\cite{CLO-book1,eisenbud-book,NAG-book}, we exhibit a range of band limits for the volume and the distribution such that the polynomial system  has only finitely many solutions, pointing to the possibility of exact recovery in these regimes.  Additionally, we comment on numerical stability issues, by providing condition number formulas for moment inversion. In the setting where the rotational distribution is known, we prove that the number of solutions is generically 1 and present an efficient algorithm for recovering the volume using ideas from tensor decomposition~\cite{harshman1970foundations}. For the practical case of an unknown distribution, we rely on methods from non-convex optimization and demonstrate, with synthetic data, successful {\em ab initio} model recovery of a molecule from the first two moments.

\subsection{Organization}
The paper is organized as follows. In Section~\ref{section: mom}, we present discretizations for the volume and distribution and derive the polynomial system obtained from the first two moments. In Section~\ref{sec:algebraic geometry}, we demonstrate that there exists a range of band limits where the polynomial system for the unknown molecule and distribution has only finitely many solutions. In Section~\ref{sec: implementation details}, we discuss some implementation details on how the system is solved and present numerical and visual results. Proofs and background material are provided in appendices.  For research reproducibility, MATLAB code is publicly available at \href{https://github.com/nirsharon/nonuniformMoM}{GitHub.com}.\footnote{The full address of the GitHub repository is https://github.com/nirsharon/nonuniformMoM.}

\section{Method of moments}\label{section: mom}
 
We begin by introducing the image formation model. Then, convenient basis for discretizing various continuous objects, namely the images and the volume (in the Fourier domain) as well as the distribution for orientations, are introduced. From these, relationships between the moments of the 2-D images and the 3-D molecular volume can be derived, enabling us to fit the molecular structure to the empirical moments of the images.
\subsection{Image formation model and the 3-D reconstruction problem}
In cryo-EM, data is acquired by projecting particles embedded in ice along the direction of the beaming electrons, resulting in tomographic images of the particles. The particles orient themselves randomly with respect to the projection direction. 
More formally, let $\phi \colon \mathbb{R}^3 \to \mathbb{R}$ be the Coulomb potential of the  3-D volume, and the projection operator be denoted by $\mathcal{P} \colon \mathbb{R}^3 \rightarrow \mathbb{R}^2$, where
\begin{equation}
\mathcal{P}\phi(x_1,x_2) := \int_{-\infty}^{\infty} \phi(x_1,x_2,x_3) \, dx_3.
\end{equation}
Assuming the $j$-th particle comes from the same volume $\phi$ but rotated by $R_j \in \textup{SO}(3)$, the image formation model is \cite{bhamre2016formation, frank2006three}
\begin{equation} \label{eqn:model_formation}
I_j = h_j * \mathcal{P}\left(R_j^{T} \cdot \phi \right) + \varepsilon_j , \quad R_j \in \so{3}, \quad j=1,\ldots,n \, ,
\end{equation}
where $\varepsilon_j$ is a random field modeling the noise term and $h_j$ is a point spread function, whose Fourier transform is known as the contrast transfer function (CTF) ~\cite{Mindell2003ctffind3, Rohou2015ctffind4, turovnova2017efficient}. Each image is assumed to lie within the box $[-1,1]\times [-1,1]$.  For size $N\times N$ discretized images, we assume the random field $\varepsilon_j \sim \mathcal{N}(0,\sigma^2 I_{N^2}),\ j=1,\ldots,n$. Here $R_j$ denotes an element in the group of $3 \times 3$ rotations $\so{3}$, and we define the group action by\footnote{Here we prefer to write the action of $R^T$ and correspondingly later we use Wigner $U$-matrices, instead of $R$ and Wigner $D$-matrices.  While simply notational, these conventions allow us to cite identities from \cite{chirikjian2016harmonic} verbatim, which are in terms of Wigner $U$-matrices and not Wigner $D$-matrices.}  
\begin{equation}
R_j^{T} \cdot \phi(x_1,x_2,x_3) := \phi(R_j \begin{bmatrix} x_1 & x_2 & x_3 \end{bmatrix}^T).
\end{equation}
The rotations $R_j$'s are not known since the molecules can take any orientation with respect to projection direction. For the purpose of simplifying the exposition, we shall henceforth disregard the CTF, by assuming
\begin{equation} \label{eqn:model_images}
I_j =  \mathcal{P}\left(R_j^T \cdot\phi \right) + \varepsilon_j , \quad j=1,\ldots,n.
\end{equation}
The presence of CTF is not expected to have a major impact on our main results, and we will incorporate the CTF in a future work. Typically, it is convenient to consider Fourier transform of the images, since by projection slice theorem, the Fourier transform $\widehat{I_j}$ of $I_j$ gives a slice of the Fourier coefficients $\widehat \phi$ of the volume $\phi$:
\begin{equation}
\label{eqn:FourierSlice}
\widehat {I_j}(x_1,x_2) = \widehat {\mathcal{P}(R_j^{T}\cdot \phi)}(x_1,x_2) + \widehat{\varepsilon_j}= (R_j^{T}\cdot \widehat{\phi})(x_1,x_2,x_3) \vert_{x_3=0} + \widehat{\varepsilon_j}.
\end{equation}
The goal of cryo-EM is to recover $\widehat \phi$ from the Fourier coefficients of the projections $\widehat {I_j}(x_1,x_2)$. While reconstructing $\widehat \phi$ given estimated $R_j$'s amounts to solving a standard computed tomography problem, we wish to reconstruct $\widehat \phi$ directly from the noisy images without estimating the rotations, for reasons detailed above. To this end, we assume the rotations are sampled from a distribution $\rho$ on $\so{3}$, where $\rho \colon \so 3\rightarrow \mathbb{R}$ is a smooth band-limited function. Then from the empirical moments of the images $\{\widehat {I_j}\}_{j=1}^n$, we jointly estimate the volume $\widehat \phi$ and the distribution $\rho$. 

\subsection{Representation of the volume, the distribution of rotations and the images} \label{subsec:representation}
As mentioned previously, the proposed method of moments consists of fitting the analytical moments
\begin{equation}
\label{eqn:analytical moments}
m_1 = \mathbb{E}_{R\sim \rho}[\widehat{\mathcal{P}\left(R^T \cdot \phi \right)}],\quad \text{  and  } \quad m_2 = \mathbb{E}_{R \sim \rho}[\widehat{\mathcal{P}\left(R^T \cdot \phi \right)}\otimes \widehat{\mathcal{P}\left(R^T \cdot \phi \right)}].
\end{equation}
to their empirical counterparts $\widetilde{m}_1$ and $\widetilde{m}_2$ as appears in~\eqref{eqn:empirical_moments} after debiasing.\footnote{By the law of large numbers, $\widetilde{m}_1 \rightarrow m_{1}$ and $\widetilde{m}_2 \rightarrow m_{2} + \sigma^2 I$ almost surely as $n \rightarrow \infty$, so $m_1$ is fitted to $\widetilde{m}_{1}$ and $m_{2}$ to $\widetilde{m}_{2} - \sigma^2 I$.  For notational convenience, we drop $\sigma^2 I$ in what follows, either assuming $\tilde{m}_{2}$ has been appropriately debiased already or $\sigma=0$.} Through fitting to the empirical moments, we seek to determine the Fourier volume $\hat \phi$ and also the distribution $\rho$. In this section, we present discretizations of $\hat \phi$ and $\rho$ by expanding them using convenient bases. 

\subsubsection{Basis for the Fourier volume $\widehat \phi$}
Since the image formation model involves rotations of the Fourier volume $\widehat \phi$, it is convenient to represent $\widehat \phi$ as an element of a function space closed under rotations;  in fact, this is the same as representing $\widehat \phi$ using spherical harmonics (see the Peter-Weyl theorem~\cite{chirikjian2016harmonic}):
\begin{equation} \label{eqn:phi_hat}
\widehat{\phi}\left(\kappa, \theta, \varphi  \right) = \sum_{\ell=0}^L  \sum_{m=-\ell}^\ell \sum_{s=1}^{S(\ell)} A_{\ell, m, s} F_{\ell,s}(\kappa) Y_\ell^m (\theta,\varphi). 
\end{equation}
Here $Y_\ell^m$ are the (complex) spherical harmonics:
\begin{equation} \label{eqn:sph_har}
Y_\ell^m(\theta,\varphi) = \sqrt{\frac{(2\ell+1)}{4 \pi}\frac{(\ell-m)!}{(\ell+m)!}} \,\, P_\ell^m(\cos \theta) \,\, e^{im\varphi}
\end{equation}
with associated Legendre polynomials $P_\ell^m$ defined by: 
\begin{equation}\label{eqn:assocLegen}
P_\ell^m(x) = \frac{(-1)^m}{2^{\ell}\ell!} (1-x^{2})^{m/2} \frac{d^{\ell+m}}{dx^{\ell+m}}(x^{2}-1)^{\ell}. 
\end{equation}
In Cartesian coordinates, spherical harmonics are polynomials of degree $\ell$. Without loss of generality, the radial frequency functions $F_{\ell,s}$ should form an orthonormal family (for each fixed $\ell$) with respect to $\kappa^2 d\kappa$, where $s = 1, \ldots, S(\ell)$ is referred to as the radial index.  Choices of radial functions suitable for molecular densities include spherical Bessel functions~\cite{andrews1992special}, which are eigenfunctions of the Laplacian on a closed ball with Dirichlet boundary condition, as well as the radial components of 3-D prolate spheroidal wave functions ~\cite{slepian1964prolate}.

We assume the volume is band-limited with Fourier coefficients supported within a radius of size $\pi N/2$, \ie, the Nyquist cutoff frequency for the images $I_j$'s discretized on a grid of size $N \times N$ (over the square $[-1,1]\times [-1,1])$.  Under this assumption, the maximum degree and radial indices $L$ and $S(\ell)$ in (\ref{eqn:phi_hat}) are essentially finite.  Further details on the particular basis functions $F_{\ell,s}$ and cutoffs $L$ and $S(\ell)$ that we choose to use are deferred to Section~\ref{app:pswf} in the appendix. Note that in practice, as we target low-resolution modeling, one can choose to decrease either the cutoff or the grid size to obtain more compact settings. The coefficients $A_{\ell,m,s} \in \mathbb{C}$ furnish our representation of $\widehat{\phi}$ using spherical harmonics. Note that since $\phi$ is real valued, its Fourier transform is conjugate-symmetric, which imposes restrictions on the coefficients $A_{\ell,m,s}$. The specific constraints are presented in Section~\ref{subse:constraints_in_optimization}.

The advantage of expanding $\widehat \phi$ in terms of spherical harmonics is that the space of degree $\ell$ spherical harmonics is closed under rotation; in group-theoretic language, this space forms a linear representation of $\textup{SO}(3)$.\footnote{\noindent In fact, this is an irreducible representation of $\textup{SO}(3)$ and varying $\ell$ these give all irreps.}    
Thus the
action of a rotation on $\widehat \phi$ amounts to a linear transformation on the expansion coefficients $A_{\ell,m,s}$ (with a block structure according to $\ell$ and $s$).  More precisely, fixing the vector space spanned by $\{Y^m_\ell(\theta,\varphi)\}_{m=-\ell}^\ell$ 
for a specific $\ell$, the action of a rotation $R$ on this vector space is represented by the \textit{Wigner matrix} $U^\ell(R)\in \mathbb{C}^{(2\ell+1)\times(2\ell+1)}$ (see \cite[p.~343]{chirikjian2016harmonic}) so that:
\begin{equation} \label{eqn:rotating_Y}
R^T \cdot Y_\ell^m(x) =  Y_\ell^m\left( R x \right) = \sum_{m^\prime = -\ell}^\ell U^\ell_{m,m^\prime}(R) Y_\ell^{m^\prime}(x), \quad x\in S^2.
\end{equation}
In particular, the matrix $U^\ell(R)$ is unitary, with entries degree $\ell$ polynomials in the entries of $R$  \cite{chirikjian2016harmonic}.  For all $R_1, R_2 \in \textup{SO}(3)$ and $\ell$, the group homomorphism property reads $U^{\ell}(R_{1}R_{2}) = U^{\ell}(R_{1})U^{\ell}(R_{2})$. In light of~\eqref{eqn:rotating_Y}, 3-D bases of the form $\{F_{\ell,s}(\kappa) Y_\ell^m(\theta,\phi)\}_{\ell,m,s}$ have been called \textit{steerable bases}.

\subsubsection{Basis for the probability distribution of rotations $\rho$}
 As we shall see, when expanding the volume in terms of spherical harmonics, the analytical moments \eqref{eqn:analytical moments} involve integrating different monomials of $\{U^l(R)\}_{\ell=0}^L$ with respect to the measure $\rho(R) dR$. To this end,  we assume the probability density $\rho$ over $\so{3}$  is a smooth band-limited function (and in a function space closed under rotation) by expanding
 \begin{equation} \label{eqn:distribution_extansion}
\rho(R) = \sum_{p=0}^P \sum_{u,v=-p}^p B_{p,u,v} U^p_{u,v}(R) , \quad R \in \so{3}.
\end{equation}
By Peter-Weyl, these form an orthonormal basis for $L^{2}(\textup{SO}(3))$, and for higher $p$ they are increasingly oscillatory functions on $\textup{SO}(3)$. Thus, expansion~\eqref{eqn:distribution_extansion} is analogous to using spherical harmonics to expand a smooth function on the sphere, or using Fourier modes for a function on the circle.  The cutoff $P \in \mathbb{N}$ is the band limit of the distribution $\rho$; we shall see in the next section that since we use only first and second moments it makes sense to assume $P \leq 2L$. Note that in the special case of a uniform distribution, the only nonzero coefficient is $B_{0,0,0} =1$. Also, $dR$ denotes the \textit{Haar measure}, which is the unique volume form on the group of total mass one that is invariant under left action.
Using the Euler angles parameterization of $\so{3}$, the Haar measure is of the form
\begin{equation} 
dR = \frac{1}{8\pi^2} \sin(\beta)  d\alpha d\beta d \gamma,
\end{equation}
where the normalizing constant ensures $ \int_{\textup{SO}(3)} dR = \int_{\alpha = 0}^{2\pi} \int_{\beta = 0}^\pi \int_{\gamma = 0}^{2\pi} dR  = 1 $.

\subsubsection{Basis for the 2-D images}
At this point, we discuss convenient representations for the images after Fourier transform, $\widehat I_j$.  Similarly to volumes, it is desirable to represent images using a function space closed under in-plane rotations, \ie, $\textup{SO}(2)$.  By the Peter-Weyl theorem, this is the same as expanding using Fourier modes, in a 2-D steerable basis:
\begin{equation}\label{eqn:image_expand}
\widehat I_j(\kappa, \varphi) = \sum_{q=-Q}^{Q} \sum_{t=1}^{T(q)} a^j_{q,t} \, f_{q,t}(\kappa) \, e^{i q \varphi}.
\end{equation}
Here the radial frequency functions $f_{q,t}$ (for fixed $q$) are taken to be an orthonormal basis with respect to $\kappa d\kappa$, with $\kappa$ referred to as the radial frequency. Comparing to expansion (\ref{eqn:phi_hat}) (see Section~\ref{subsec:representation}), it makes most sense to set $Q=L$.  Again, owing to the Nyquist frequency for the discretized images $I_j$, we may bound the cutoffs $T(q)$.  Typical choices for $f_{q,t}$ for representing tomographic images include Fourier-Bessel functions~\cite{zhao2013fourier} and the radial components of 2-D prolate spheroidal wave functions ~\cite{slepian1964prolate}.  Details on our specific choices are given in Section~\ref{sec:appendix_2D_PSWF} in the appendix.

\subsubsection{Choice of radial functions}
For the finite expansions in~\eqref{eqn:phi_hat} and~\eqref{eqn:image_expand} to accurately represent the Fourier transforms of the electric potential and its slices, one should carefully choose the radial functions $F_{\ell,s}$ and $f_{q,t}$, together with the truncation-related quantities $L$, $S(\ell)$, $Q$, and $T(q)$.
In this work, we consider $F_{\ell,s}$ and $f_{q,t}$ to be the radial parts of the three-dimensional and two-dimensional PSWFs~\cite{slepian1964prolate}, respectively.  In Appendix~\ref{app:pswf}, we describe some of the key properties of the PSWFs, and propose upper bounds for setting $L$, $S(\ell)$, $Q$, and $T(q)$.  In practice, band limits would be selected by balancing these expressivity considerations together with the well-posedness and conditioning considerations of Section~\ref{sec:algebraic geometry}.

\subsection{Low-order moments}
In this section, we derive the analytical relationship between the first two moments for the observed images $\{a^j_{q,t}\}_{j,q,t}$, and the coefficients $\{A_{\ell,m,s}\}_{\ell,m,s}$ and $\{B_{p,u,v}\}_{p,u,v}$ of the volume and distribution of rotations. These relationships will be used to determine $\{A_{\ell,m,s}\}_{\ell,m,s}$ and $\{B_{p,u,v}\}_{p,u,v}$ via solving a nonlinear least-squares problem. 

To this end, we first register a crucial relationship between the coefficients of the 2-D images and the 3-D volume. By indexing the images in terms of $R\in \so{3}$ (instead of $j$ in \eqref{eqn:image_expand}), we have:
\begin{equation} \label{eqn:image_expand_R}
\widehat{I_R}(\kappa,\varphi) = \sum_{q=-Q}^Q \sum_{t=1}^{T(q)} a^R_{q,t} f_{q,t}(\kappa) e^{i q\varphi}.
\end{equation}
On the other hand, using the Fourier slice theorem and \eqref{eqn:rotating_Y}:
\begin{align} \label{eqn:rotate_volume_SH}
\widehat{I_R}(\kappa,\varphi) &= R^T \cdot \widehat\phi(\kappa,\frac{\pi}{2},\varphi) \\
&=  \, \sum_{\ell =0}^L \sum_{s=1}^{S(\ell)} \sum_{m=-\ell}^\ell A_{\ell,m,s}  \, F_{\ell,s}(\kappa) \,\, R^T \! \cdot Y^{m}_{\ell}( \frac{\pi}{2}, \varphi) \\
&=  \sum_{\ell =0}^L \sum_{s=1}^{S(\ell)} \sum_{m=-\ell}^\ell \sum_{m^\prime=-\ell}^{\ell}  A_{\ell,m,s} \, F_{\ell,s}(\kappa) \, U^{\ell}_{m,m^\prime}(R) \, Y^{m^\prime}_{\ell}(\frac{\pi}{2}, \varphi).  
\end{align}
Multiplying \eqref{eqn:image_expand_R} and  \eqref{eqn:rotate_volume_SH} by $f_{q,t}(\kappa) e^{-i q\varphi}$ and integrating against $\frac{1}{2\pi} \kappa d\kappa d\varphi $, then combining the orthogonality relation
\begin{equation*}
\frac{1}{2\pi}\int_0^\infty \int_0^{2\pi} f_{q_{1},t_{1}}(\kappa) e^{iq_{1}\varphi}  f_{q_{2}, t_{2}}(\kappa) e^{-iq_{2}\varphi}d\varphi \kappa d\kappa \, = \, \mathbbm{1}_{q_{1}=q_{2}} \, \mathbbm{1}_{t_{1}=t_{2}}
\end{equation*}
with $Y^{m^\prime}_{\ell}(\frac{\pi}{2}, \varphi) \propto e^{i m^\prime \varphi}$, tells us
\begin{equation} \label{eqn:a_A_relation}
a^R_{q,t} =  \sum_{\ell =\vert q \vert}^L \sum_{s=1}^{S(\ell)} \sum_{m=-\ell}^\ell   A_{\ell,m,s}  \, U^{\ell}_{m,q}(R) \, \gamma_{\ell,s}^{q, t},
\end{equation}
where $\gamma_{\ell,s}^{q,t}$ are constants depending on the radial functions:
\begin{align} \label{eqn:gamma_def}
\gamma_{\ell,s}^{q,t} :\!&= \frac{1}{2\pi}\int_0^\infty \int_0^{2\pi} Y^{q}_{\ell}(\frac{\pi}{2}, \varphi) \, e^{-iq\varphi} \, F_{\ell,s}(\kappa) \, f_{q,t}(\kappa) \kappa d\kappa d\varphi  \\
 &=  \frac{1}{2\pi} \, \sqrt{\frac{(2\ell+1)}{4 \pi}\frac{(\ell-q)!}{(\ell+q)!}}  \,\, P^{q}_{\ell}(0) \int_0^\infty F_{\ell,s}(\kappa) \, f_{q,t}(\kappa) \kappa d\kappa.
\end{align}
From the term $P^{q}_{\ell}(0)$, we see $\gamma_{\ell,s}^{q,t} = 0$ if $q \not\equiv \ell$ (mod $2$) (and if $\vert q \vert > \ell$ then $\gamma_{\ell,s}^{q,t} := 0$).  Also one may check $\gamma_{\ell,s}^{-q,t} = (-1)^q \gamma_{\ell,s}^{q,t}$. Equation~\eqref{eqn:a_A_relation} connects 2-D image coefficients with 3-D volume coefficients.  We note we may as well choose $Q = L$ in \eqref{eqn:image_expand_R}, since if $\vert q \vert>L$ then $a^R_{q,t} = 0$. In practice, the coefficients $\gamma_{\ell,s}^{q,t}$ are calculated via numerical integration over a closed segment, according to the localization property of the PSWFs, see Appendix~\ref{app:pswf} and~\cite{lederman2017numerical}.

\subsubsection{The first moment}
In this section, from \eqref{eqn:a_A_relation} the relationship between the first moment of the images and the volume is derived. Taking the expectation over $R$, and using the distribution expansion~\eqref{eqn:distribution_extansion}, we get
\begin{align}
\mathbb{E}_R[a_{q,t}^R]  &=  \sum_{\ell=\vert q \vert}^{L} \sum_{s=1}^{S(\ell)} \sum_{m=-\ell}^{\ell}  A_{\ell,m,s}  \gamma_{\ell,s}^{q,t} \int U^{\ell}_{m,q}(R) \rho(R) dR \\
&=  \sum_{\ell=\vert q \vert}^{L} \sum_{s=1}^{S(\ell)} \sum_{m=-\ell}^{\ell} \sum_{p=0}^{P} \sum_{u,v= -p}^{p} A_{\ell,m,s} \, B_{p,u,v} \, \gamma_{\ell,s}^{q,t} \int U^{\ell}_{m,q}(R) U^p_{u,v}(R)  dR \\
&=  \sum_{\ell = \vert q \vert}^{\min(L,P)} \,\, \sum_{s=1}^{S(\ell)} \,\, \sum_{m=-\ell}^{\ell} A_{\ell,m,s} \,\, B_{\ell,-m,-q} \,\, \gamma_{\ell,s}^{q,t} \,\, \frac{(-1)^{m+q}}{2\ell+1}. \label{eqn:first_mom}
\end{align} 
The last equation follows from the orthogonality of the Wigner matrix entries~\cite[p. 68]{Angular_Momentum_book}
\begin{equation}\label{eqn:wigner_orthog}
\int_{\textup{SO}(3)} \overline{U^{\ell}_{m,n}(R)} U^{p}_{u,v}(R) \, dR  = \frac{1}{2\ell + 1}\,\, {\mathbbm{1}}_{\ell = p} \,\, {\mathbbm{1}}_{u = m} \,\, {\mathbbm{1}}_{v = n},
\end{equation}
and 
\begin{equation}\label{eqn:wigner_conj}
\overline{U^p_{u,v}(R)} = (-1)^{u+v} \, U^p_{-u,-v}(R).
\end{equation}
The first moment gives a set of bilinear forms in the unknowns $\{A_{\ell,m,s}\}_{\ell,m,s}$ and $\{B_{p,u,v}\}_{p,u,v}$,
as seen in~\eqref{eqn:first_mom} for each $(q,t)$ with $\vert q \vert \leq \min(L,P)$ and $1 \leq t \leq T(q)$.

It is convenient to provide compact notation for the first moment formula.  To this end, we introduce:
\begin{enumerate}\setlength{\itemsep}{1pt}
\item $\mathcal{A}_{\ell}$, a matrix of size $S(\ell) \times (2\ell + 1)$ given by $(\mathcal{A}_{\ell})_{s,m} = A_{\ell,m,s}$
\item $\beta^{q}_{\ell}$, a vector of size $2\ell + 1$ given by $(\beta^{q}_{\ell})_{m} = \tfrac{(-1)^{m}}{2\ell + 1}B_{\ell,-m,-q}$
\item $\Gamma^{q}_{\ell}$, a matrix of size $T(q) \times S(\ell)$ given by 
$(\Gamma^{q}_{\ell})_{t,s} = (-1)^{q} \gamma_{\ell,s}^{q,t}$. 
\end{enumerate}
Item 2 is zero if $\ell < \lvert q \rvert$ and item 3 is zero if either $ \ell < \lvert q \rvert$ or $\ell \not\equiv q \, (\textup{mod }2)$. In this notation, the first moment formula \eqref{eqn:first_mom} (with fixed $q$ and varying $t$) reads:
\begin{equation}\label{eqn:first_mom_compacy}
m_{1}(q) \,\, := \,\, \left( \mathbb{E}[a^{R}_{q,t}] \right)_{t=1, \ldots, T(q)} \hspace{1em} = \sum_{\substack{\ell \, : \, \lvert q \rvert  \leq \ell \leq L \\ \ell \equiv q \, (\textup{mod }2) }}\Gamma^{q}_{\ell} \, \mathcal{A}_{\ell} \, \beta^{q}_{\ell}.
\end{equation}
Here $m_{1}(q) \in \mathbb{C}^{T(q)}$ is nonzero only if $\lvert q \rvert \leq \min(L,P)$.

\subsubsection{The second moment}
Higher moments require higher powers of the image coefficients, and so in the case of the second moment and for $\vert q_{1} \vert, \vert q_{2} \vert \leq L$, we have
\begin{eqnarray}\label{eqn:second_momv1}
\mathbb{E}_R \left[ a^R_{q_1,t_1} a^R_{q_2,t_2} \right] = \sum_{\ell_1 = |q_1|}^{L} \sum_{s_1=1}^{S(\ell_1)}  \sum_{m_{1} = -\ell_{1}}^{\ell_{1}} \sum_{\ell_2 = |q_2|}^{L} \sum_{s_2=1}^{S(\ell_2)} \sum_{m_{2} = -\ell_{2}}^{\ell_{2}} A_{\ell_1,m_1,s_1} \gamma_{\ell_1,s_1}^{q_1,t_1} \\ \times \,\,\,\,  A_{\ell_2,m_2,s_2}   \gamma_{\ell_2,s_2}^{q_2,t_2} \int U^{\ell_1}_{m_1,q_1}(R) U^{\ell_2}_{m_2,q_2}(R) \rho(R) dR
\end{eqnarray}
where
\begin{equation}\label{eqn:triple_Uv1}
 \int U^{\ell_1}_{m_1,q_1}(R) U^{\ell_2}_{m_2,q_2}(R) \rho(R) dR = \sum_{p=0}^{P} \sum_{u, v=-p}^{p} B_{p,u,v} \int U^{\ell_1}_{m_1,q_1}(R) U^{\ell_2}_{m_2,q_2}(R) U^{p}_{u,v}(R) dR.
\end{equation}

The product of two Wigner matrix entries is expressed as a linear combination of Wigner matrix entries \cite[p. 351]{chirikjian2016harmonic},
\begin{gather} \label{eqn:CG_expand}
U^{\ell_1}_{m_1,q_1}(R) U^{\ell_2}_{m_2,q_2}(R) = \sum_{\ell_{3} =  \vert \ell_{2} - \ell_{1} \vert}^{\ell_{1} + \ell_{2}} \mathcal{C}_{\ell_3}(\ell_1, \ell_2, m_1, m_2, q_1, q_2) \, U^{\ell_3}_{m_1+m_2, n_1+n_2}(R),
\end{gather}
where 
\begin{equation} \label{eqn:C_asProductOfCGcoefs}
\mathcal{C}_{\ell_3}(\ell_1, \ell_2, m_1, m_2, q_1, q_2) = 
C(\ell_1, m_1; \ell_2, m_2 \vert \ell_3, m_1+m_2) C(\ell_1, q_1; \ell_2, q_2 \vert \ell_3, q_1+q_2) ,
\end{equation}	
is the product of two Clebsch-Gordan coefficients. This product is nonzero only if $(\ell_{1}, \ell_{2}, \ell_3)$ satisfy the triangle inequalities. Substituting \eqref{eqn:CG_expand} into \eqref{eqn:triple_Uv1}, and invoking \eqref{eqn:wigner_orthog} and \eqref{eqn:wigner_conj}, we obtain:
\begin{multline}\label{eqn:triple_Uv2}
\int U^{\ell_1}_{m_1,q_1}(R) U^{\ell_2}_{m_2,q_2}(R) \rho(R) dR = \sum_{p} \mathcal{C}_{p}(\ell_1, \ell_2, m_1, m_2, q_1, q_2) \\
\times \,\,\,\, B_{p,-m_1-m_2,-q_1-q_2} \,\, \frac{(-1)^{m_1+m_2+q_1+q_2}}{2p+1}
\end{multline}
where the sum is over $p$ satisfying $\max(\vert \ell_1 - \ell_2 \vert, \vert m_1 + m_2 \vert, \vert q_1 + q_2 \vert) \leq p \leq \min(\ell_1 + \ell_2, P)$.  Now substituting into \eqref{eqn:second_momv1} gives:
\begin{multline}\label{eqn:second_momv2}
\mathbb{E}_R \left[ a^R_{q_1,t_1} a^R_{q_2,t_2} \right] = \sum_{\ell_1, s_1, m_1, \ell_2, s_2, m_2} A_{\ell_1, m_1, s_1} \, A_{\ell_2, m_2, s_2} \, \gamma^{q_1,t_1}_{\ell_1,s_1} \, \gamma^{q_2,t_2}_{\ell_2,t_2} \, (-1)^{q_1+q_2} \,\, \times  \\ 
 \sum_{p} B_{p,-m_1-m_2,-q_1-q_2}  \mathcal{C}_{p}(\ell_1, \ell_2, m_1, m_2, q_1, q_2) \frac{(-1)^{m_1+m_2}}{2p+1}
\end{multline}
where the first sum has the range of \eqref{eqn:second_momv1} and the second sum has range of \eqref{eqn:triple_Uv2}.  The second moment thus gives a set of polynomials in unknowns $\{A_{\ell,m,s}\}_{\ell,m,s}$ and $\{B_{p,u,v}\}_{p,u,v}$, quadratic in the volume coefficients and linear in the distribution coefficients,
namely, the expression in \eqref{eqn:second_momv2} for each $(q_1,t_1,q_2,t_2)$ with $\vert q_1 \vert \leq L$, $\vert q_2 \vert \leq L$, $\lvert q_1 + q_2 \rvert \leq P$, $1 \leq t_1 \leq T(q_1)$ and $1 \leq t_2 \leq T(q_2)$.  
Also, it may be assumed that $P \leq 2L$, since $B_{p,u,v}$ with $p > 2L$ does not contribute in either~\eqref{eqn:second_momv2} or~\eqref{eqn:first_mom}.

As for the first moment, it will be convenient to rewrite the second moment in compact notation.  Let us further introduce:
\begin{enumerate}\addtocounter{enumi}{3}
\item $\mathcal{B}^{q_{1}, q_{2}}_{\ell_{1}, \ell_{2}}$, a matrix of size $(2 \ell_{1} + 1) \times (2 \ell_{2} + 1)$ given by 
\[ (\mathcal{B}^{q_{1}, q_{2}}_{\ell_{1}, \ell_{2}})_{m_{1}, m_{2}} = \sum_{p} B_{p,-m_{1}-m_{2},-q_{1}-q_{2}}  \mathcal{C}_{p}(\ell_1, \ell_2, m_1, m_2, q_1, q_2) \tfrac{(-1)^{m_{1} + m_{2}}}{2p+1} , \] 
where the sum is over $\max(\lvert \ell_{1} - \ell_{2} \rvert, \lvert m_{1} + m_{2} \rvert, \lvert q_{1} + q_{2} \rvert) \leq p \leq \min(\ell_{1}+\ell_{2}, P)$ and $C_{p}$ denotes the product Clebsch-Gordan coefficients in \eqref{eqn:C_asProductOfCGcoefs}.
\end{enumerate}
Item 4 is zero if either $\ell_{1} < \lvert q_{1} \rvert$ or $\ell_{2} < \lvert q_{2} \rvert$ or $\max(\lvert \ell_{1} - \ell_{2} \rvert, \lvert q_{1} + q_{2} \rvert) > P$.
Now for fixed $q_1, q_2$ and varying $t_1, t_2$, the second moment \eqref{eqn:second_momv2} neatly reads:
\begin{equation}\label{eqn:second_mom_compacy}
m_{2}(q_1,q_2) \,\, := \,\, \Big{(} \mathbb{E}[ a^{R}_{q_{1}, t_{1}} a^{R}_{q_{2},t_{2}} ]\Big{)}_{\substack{ t_{1} = 1, \ldots, T(q_{1}) \\ t_{2} = 1, \ldots, T(q_{2})}} \hspace{1em} =  
\sum_{\substack{\ell_{1}, \ell_{2}\, : \, \lvert q_1 \rvert  \leq \ell_1 \leq L \\ \lvert q_2 \rvert  \leq \ell_2 \leq L  \\ \ell_1 \equiv q_1 \, (\textup{mod }2) \\ \ell_2 \equiv q_2 \, (\textup{mod }2) \\ \lvert \ell_{1} - \ell_{2} \rvert \leq P}}
\Gamma^{q_1}_{\ell_1} \, \mathcal{A}_{\ell_1} \, \mathcal{B}^{q_{1}, q_{2}}_{\ell_{1}, \ell_{2}} \, \mathcal{A}^{T}_{\ell_{2}} \, (\Gamma^{q_{2}}_{\ell_{2}})^{T}.
\end{equation}
Here $m_{2}(q_{1}, q_{2}) \in \mathbb{C}^{T(q_{1}) \times T(q_{2})}$ is nonzero only if $\lvert q_1 \rvert, \lvert q_2 \rvert \leq L$ and $\lvert q_1 + q_2 \rvert \leq P$.

\section{Uniqueness Guarantees and Conditioning} \label{sec:algebraic geometry}

Here, we derive uniqueness guarantees and comment on intrinsic conditioning for the polynomial system defined by the first and second moments, \eqref{eqn:first_mom_compacy} and \eqref{eqn:second_mom_compacy}.  

Analysis comes in four cases, according to assumptions on the distribution $\rho$: whether $\rho$ is known or unknown; and if $\rho$ is invariant to in-plane rotations, i.e., $\rho$ depends only on the viewing directions up to rotations that retain the $z$-axis. This invariance restricts $\rho$ to be a non-uniform distribution function over $S^2$, see subsection~\ref{sec:in-planeCon}. If $\rho$ is not invariant to in-plane rotations, we say $\rho$ is totally non-uniform as a distribution on the entire $ \so{3}$. Throughout, our general finding is \textit{well-posedness}, \ie, the molecule is uniquely determined by first and second moments up to finitely many solutions, under genericity assumptions, for a range of band limits $L$ and $P$.  In the case of a known totally non-uniform distribution, we prove the number of solutions is $1$, and give an efficient, explicit algorithm to solve for $\{A_{\ell, m ,s}\}$.  For all cases, sensitivity of the solution to errors in the moments is quantified by condition number formulas. 

\subsection{Known, totally non-uniform \texorpdfstring{$\rho$}{distribution}}\label{sec:easy-case}
For this case, we have a provable algorithm that recovers $\{A_{\ell,m,s}\}$ from \eqref{eqn:first_mom_compacy} and \eqref{eqn:second_mom_compacy} (up to the satisfaction of technical genericity and band limit conditions). Remarkably, while the polynomial system is nonlinear (consisting of both quadratic and linear equations), our method is based only on linear algebra.  The main technical idea is \textit{simultaneous diagonalization} borrowed from Jennrich's well-known algorithm for third-order tensor decomposition \cite{harshman1970foundations}, that was also used recently for signal recovery in MRA~\cite{perry2017sample}.
\begin{theorem}\label{thm:known_rho}
The molecule $\{ A_{\ell, m, s} \}$ is uniquely determined by the analytical first and second moments, \eqref{eqn:first_mom_compacy} and \eqref{eqn:second_mom_compacy}, in the case the distribution $\{ B_{p,u,v} \}$ is \textup{totally non-uniform}, \textup{known} and $P \geq 2L$, provided it also holds:
\begin{enumerate}[(i)]
\item The matrices $B_{1} := \mathcal{B}^{L, L}_{L, L}$ and $B_{2} = \mathcal{B}^{L, -L}_{L,L}$ of size $(2L + 1) \times (2L+1)$ both have full rank, and $B_{1} B_{2}^{-1}$ has distinct eigenvalues.  Likewise $B_{3} := \mathcal{B}^{L-1,L-1}_{L-1,L-1}$ and $B_{4} = \mathcal{B}^{L-1,1-L}_{L-1,L-1}$ of size $(2L - 1) \times (2L - 1)$ both have full rank, and $B_{3} B_{4}^{-1}$ has distinct eigenvalues.
\item Writing $B_{1} B_{2}^{-1} =: Q_{12} D_{12} Q_{12}^{-1}$ and $B_{3} B_{4}^{-1} =: Q_{34} D_{34} Q_{34}^{-1}$ for eigendecompositions, the vectors $b_{12} := Q_{12}^{-1} \beta^{L}_{L}$ of size $2L+1$  and 
$b_{34} := Q_{34}^{-1} \beta^{L-1}_{L-1}$ of size $2L-1$ both have no zero entries. 
\item For $\ell \leq L-2$, the matrix $\mathcal{B}_{\ell,L}^{\ell,L}$ of size $(2\ell+1) \times (2L+1)$ has full row rank.
\item For all $\ell$, the matrix $\mathcal{A}_{\ell}$ of size $S(\ell) \times (2\ell + 1)$ has full column rank.  
\item For $\ell \geq \lvert q \rvert$ with $\ell \equiv q \,\, (\textup{mod }2)$, the matrix $\Gamma^{q}_{\ell}$ of size $T(q) \times S(\ell)$ has full column rank.
\end{enumerate}
Moreover in this case, there is a provable algorithm inverting \eqref{eqn:first_mom_compacy} and \eqref{eqn:second_mom_compacy} to get $\{A_{\ell, m, s}\}$ in time $\mathcal{O}\left(L^2 \cdot T^3\right)$, where $T := \max_{q} T(q)$.
\end{theorem}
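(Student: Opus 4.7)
I would prove Theorem~\ref{thm:known_rho} by staging a Jennrich-style simultaneous diagonalization on principled slices of the second moment to recover the highest-degree block $\mathcal{A}_L$, then reusing the same device at level $L-1$, and finally descending inductively in $\ell$ using only linear algebra. The first moment enters only to fix a residual diagonal gauge from the eigendecomposition.

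\emph{Top block.} First I would observe that in~\eqref{eqn:second_mom_compacy}, for $(q_1, q_2) \in \{(L, L), (L, -L)\}$, the parity constraint $\ell_i \equiv q_i\,(\mathrm{mod}\,2)$ combined with $|q_i| \leq \ell_i \leq L$ forces $\ell_1 = \ell_2 = L$, so
\begin{equation*}
m_2(L, \pm L) \;=\; \Gamma^{L}_{L}\,\mathcal{A}_L\, B_{i}\,\mathcal{A}_L^{T}\,(\Gamma^{\pm L}_{L})^T, \qquad i \in \{1,2\}.
\end{equation*}
Hypothesis~(v) lets me left-pseudoinvert the two $\Gamma$ factors to extract $Y_i := \mathcal{A}_L B_i \mathcal{A}_L^T$. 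By~(i) and~(iv), the column space of $Y_1$ equals that of $\mathcal{A}_L$ and has dimension $2L+1$; choosing an orthonormal basis $U$ of it lets me write $\mathcal{A}_L = UM$ with $M$ invertible, and $\tilde Y_i := U^T Y_i U = M B_i M^T$ becomes an invertible $(2L+1) \times (2L+1)$ matrix. Then $\tilde Y_1 \tilde Y_2^{-1} = M(B_1 B_2^{-1}) M^{-1}$ is conjugate to the known $B_1 B_2^{-1}$, whose eigenvalues are distinct by~(i). Matching eigenvalues fixes the permutation, so eigendecomposition returns the columns of $MQ_{12}$ up to an unknown right-multiplicative diagonal scaling; equivalently, $\mathcal{A}_L$ is recovered up to one unknown diagonal column rescaling.

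\emph{Scaling via the first moment.} The first moment at $q=L$ similarly collapses to $m_1(L) = \Gamma^L_L \,\mathcal{A}_L\, \beta^L_L$. Left-pseudoinverting $\Gamma^L_L$ and expressing $\mathcal{A}_L$ in the recovered gauge converts the residual scaling ambiguity into a diagonal linear system whose coefficient vector is precisely $b_{12} = Q_{12}^{-1}\beta^L_L$; hypothesis~(ii) guarantees every entry is nonzero, so the scaling, and hence $\mathcal{A}_L$, is pinned down uniquely.

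\emph{Descent in $\ell$.} Replacing $L$ by $L-1$ and invoking the $B_3, B_4, b_{34}$ parts of~(i),(ii), the identical pipeline applied to $m_2(L-1, \pm(L-1))$ and $m_1(L-1)$ recovers $\mathcal{A}_{L-1}$. For $\ell \le L-2$, I would proceed by induction on $L-\ell$: assuming every $\mathcal{A}_{\ell'}$ with $\ell' > \ell$ is known, examine $m_2(\ell, L)$, in which parity forces $\ell_2 = L$ while $\ell_1$ ranges over $\{|\ell|, |\ell|+2, \ldots, L\}$. Subtracting the known contributions from $\ell_1 > \ell$ isolates
\begin{equation*}
m_2^{\mathrm{red}}(\ell, L) \;=\; \Gamma^\ell_\ell \,\mathcal{A}_\ell\, \mathcal{B}^{\ell,L}_{\ell,L}\, \mathcal{A}_L^T (\Gamma^L_L)^T.
\end{equation*}
I would then left-cancel $\Gamma^\ell_\ell$ by~(v), right-cancel $\mathcal{A}_L^T (\Gamma^L_L)^T$ (full row rank by~(iv),(v)), and finally right-cancel the full-row-rank $\mathcal{B}^{\ell,L}_{\ell,L}$ from~(iii) to extract $\mathcal{A}_\ell$.

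\emph{Main obstacle and complexity.} The crux is the non-square regime $S(L) > 2L+1$, in which Jennrich's classical formulation does not directly apply; the remedy is the orthogonal compression by $U$, whose legitimacy rests on~(i) and~(iv). The remaining issue, the diagonal gauge ambiguity of the eigendecomposition, is exactly what the first moment together with~(ii) eliminates. For complexity, each level does $\mathcal{O}(L)$ subtractions and a constant number of pseudoinversions of matrices of dimension $\leq T$, costing $\mathcal{O}(L \cdot T^3)$; summing over $\mathcal{O}(L)$ levels yields the claimed $\mathcal{O}(L^2 \cdot T^3)$ runtime.
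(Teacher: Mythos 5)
Your proposal is correct and follows essentially the same route as the paper's proof: a Jennrich-style simultaneous diagonalization of the $m_2(L,\pm L)$ (and $m_2(L-1,\pm(L-1))$) slices to recover the top blocks up to a diagonal gauge, the first moment together with condition (\textit{ii}) to fix that gauge, and reverse frequency marching with pseudo-inverse cancellations (conditions (\textit{iii})--(\textit{v})) for the lower-$\ell$ blocks, with the same $\mathcal{O}(L^2 T^3)$ count. The only cosmetic difference is that you cancel the $\Gamma$ factors and compress to a square $(2L+1)\times(2L+1)$ matrix before eigendecomposing, whereas the paper eigendecomposes the $T(L)\times T(L)$ product $m_2(L,L)\,m_2(L,-L)^{\dagger}$ directly.
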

\begin{proof}
	For this proof, we need some general properties of the \textit{Moore-Penrose pseudo-inverse}, denoted by $\dagger$, as in \cite{ben2003generalized}.  In particular, if $Y \in \mathbb{C}^{n_1 \times n_2}$ has full column rank and $Z \in \mathbb{C}^{n_2 \times n_3}$ has full row rank, then $Y^{\dagger} Y= I_{n_2}$, $Z Z^{\dagger} = I_{n_2}$, $(YZ)^{\dagger} = Z^{\dagger} Y^{\dagger}$, and also, pseudo-inversion and transposition commute.
	
	Proceeding, the second moment with $q_{1}=L, q_{2}=L$ tells us:
	\begin{equation}\label{eq:secMoM1}
	m_{2}(L,L) \,\,\, = \,\,\, \Gamma^{L}_{L} \mathcal{A}_{L} B_{1} (\mathcal{A}_{L})^{T}  (\Gamma^{L}_{L})^{T} \,\, \in \,\, \mathbb{C}^{T(L) \times T(L)},
	\end{equation}
	and with $q_{1}=L, q_{2}=-L$:
	\begin{equation}\label{eq:secMoM2}
	m_{2}(L,-L) \,\, = \,\,\, \Gamma^{L}_{L} \mathcal{A}_{L} B_{2} (\mathcal{A}_{L})^{T} (\Gamma^{-L}_{L})^{T} \,\, \in \,\, \mathbb{C}^{T(L) \times T(L)},
	\end{equation}
	
	\noindent where $\Gamma_{L}^{L} = (-1)^L \Gamma_{L}^{-L}$.  We compute $(-1)^L$ times the Moore-Penrose psuedo-inverse of \eqref{eq:secMoM2} and then multiply this on the right of \eqref{eq:secMoM1}.  Because $\Gamma_{L}^{L}$ and $\mathcal{A}_{L}$ are each tall with full column rank by assumptions ($v$) and ($iv$), respectively, and $B_2$ is invertible by ($i$), properties of the pseudo-inverse imply:
	\begin{align}\label{eq:like_jenrich}
	(-1)^L m_{2}(L,L) m_{2}(L,-L)^{\dagger} &= \left( \Gamma^{L}_{L} \mathcal{A}_{L} B_{1} (\mathcal{A}_{L})^{T} (\Gamma_{L}^{L})^{T}  \right)  \left( \Gamma^{L}_{L} \mathcal{A}_{L} B_{2} (\mathcal{A}_{L})^{T} (\Gamma_{L}^{L})^{T}  \right)^{\dagger} \nonumber \\ \nonumber 
	&=  (\Gamma^{L}_{L} \mathcal{A}_{L}) B_{1} (\mathcal{A}_{L})^{T} (\Gamma_{L}^{L})^{T}   (\Gamma_{L}^{L})^{T \dagger}  (\mathcal{A}_{L})^{T \dagger} B_{2}^{-1} (\Gamma^{L}_{L} \mathcal{A}_{L})^{\dagger} \\ \nonumber
	&=(\Gamma^{L}_{L} \mathcal{A}_{L}) B_{1} B_{2}^{-1} (\Gamma^{L}_{L} \mathcal{A}_{L})^{\dagger} \\ \nonumber
	&= (\Gamma^{L}_{L} \mathcal{A}_{L}) Q_{12} D_{12} Q_{12}^{-1} (\Gamma^{L}_{L} \mathcal{A}_{L})^{\dagger} \\ 
	&= \left(\Gamma^{L}_{L} \mathcal{A}_{L} Q_{12}\right) D_{12} \left(\Gamma^{L}_{L} \mathcal{A}_{L} Q_{12}\right)^{\dagger},
	\end{align}
	where we have substituted in an eigendecomposition  $B_{1} B_{2}^{-1} = Q_{12} D_{12} Q_{12}^{-1}$.
	As $B_{1}B_{2}^{-1}$ has distinct eigenvalues by condition ($i$), we see that the eigenvectors of $(-1)^L m_{2}(L,L) m_{2}(L,-L)^{\dagger}$ are unique up to scale and given as the columns of $\Gamma^{L}_{L} \mathcal{A}_{L} Q_{12}$. 
	Thus, $\Gamma^{L}_{L} \mathcal{A}_{L} Q_{12} = X\Lambda$, where $X$ consists of eigenvectors of \eqref{eq:like_jenrich} and $\Lambda$ is an unknown (as yet) diagonal matrix.  
	
	To disambiguate the scales $\Lambda$, we compare with the first moment for $q=L$:
	\begin{equation}
	m_{1}(L)=\Gamma_{L}^{L} \mathcal{A}_{L} \beta_L^{L} = X \Lambda Q_{12}^{-1} \beta_L^{L} = X \Lambda b_{12}.
	\end{equation}
	Multiplying on the left by $X^{\dagger}$ gives $X^{\dagger} m_{1}(L) = \Lambda b_{12}$, an equality of matrix-vector products in which the only unknown is the diagonal matrix $\Lambda$.  By the full support of $b_{12}$ (assumption ($ii$)), this determines $\Lambda$.  Substituting into $X \Lambda$, we now know $\Gamma^{L}_{L} \mathcal{A}_{L} Q_{12}$.  Multiplying on the left by $\Gamma^{L \dagger}_{L}$ and on the right by $Q_{12}^{-1}$ tells us $\mathcal{A}_{L}$.  
	
	Backward marching, the second moment with $q_1 = L-2$ and $q_2=L$ reads: 
	\begin{equation}
	m_{2}(L-2,L)= \Gamma^{L-2}_{L} \mathcal{A}_{L} B^{L-2,L}_{L,L} (\mathcal{A}_{L})^{T} (\Gamma^{L}_{L})^{T} \,\, + \,\, \Gamma^{L-2}_{L-2} \mathcal{A}_{L-2} B^{L-2,L}_{L-2,L} (\mathcal{A}_{L})^{T} (\Gamma^{L}_{L})^{T}.
	\end{equation} 
	At this point, we know the first term, and thus the second term gives us $ \mathcal{A}_{L-2}$ by appropriately multiplying by pseudo-inverses ($B^{L-2,L}_{L,L}$ is right-invertible by ($iii$)).  
	
	Then, we may look at the second moments with $q_1 = L-4$ and $q_2=L$ to similarly determine $\mathcal{A}_{L-4}$, and so on, to $\mathcal{A}_{0}$ or $\mathcal{A}_{1}$ (depending on the parity of $L$). Analogous reasoning and usage of the assumptions gives $\mathcal{A}_{L-1}, \mathcal{A}_{L-3}, \ldots$
	
	We have provided an algorithm to solve for each $\mathcal{A}_{\ell}$, which proves uniqueness of $\mathcal{A}_{\ell}$ as a byproduct.  The time complexity of the algorithm is $\mathcal{O}(L^{2} T^3)$ since it involves $\mathcal{O}(L^2)$ matrix operations  --matrix multiplications, pseudo-inversions or eigendecompositions -- of matrices whose dimensions are all bounded by $T$.  (Note that back-substituting to solve for $\mathcal{A}_{\ell}$ involves $\mathcal{O}(L-\ell)$ such matrix operations.)
\end{proof}

We remark that condition (\textit{iv}), which just involves the choice of radial bases, appears to always hold for PSWFs using the cutoffs proposed in Appendix \ref{app:pswf}.  Conditions (\textit{i}), ($ii$) and (\textit{iii}) just involve the distribution, and are full-rank, spectral and non-vanishing hypotheses.  Condition (\textit{iv}) just involves the molecule and in particular requires $S(L) \geq 2L+1$, which limits $L$ to be less than the Nyquist frequency where $S(L_{\textup{Nyquist}}) = 1$.  

Our algorithm goes by \textit{reverse}\footnote{Reverse frequency marching is natural given the sparsity structure of \eqref{eqn:second_mom_compacy}: only $\mathcal{A}_{\ell_1}$ and $\mathcal{A}_{\ell_2}$ with $\ell_{1} \geq |q_1|$, $\ell_{1} \equiv q_{1} \,\, (\textup{mod }2)$ and $\ell_{2} \geq |q_2|$, $\ell_{2} \equiv q_{2} \,\, (\textup{mod }2)$ appear in the moments $m_{2}(q_{1}, q_{2})$.} frequency marching, as we solve for top-frequency coefficients from the second moment \eqref{eqn:second_mom_compacy} where $q_1, q_2 = \pm L, \pm (L-1)$ via eigenvectors (similar to simultaneous diagonalization in Jennrich's algorithm), and then solving for lower-frequency coefficients via linear systems.   While our conditions in Theorem~\ref{thm:known_rho} are certainly not necessary, fortunately for generic\footnote{\noindent This means generic with respect to the \textit{Zariski topology} \cite{book-harris}.  Equivalently, there is a non-zero polynomial $p$ in $A,B$ such that $p(A,B) \neq 0$ implies the conditions in Theorem~\ref{thm:known_rho} are met.} $(A,B)$, those conditions are satisfied, so that the method applies:

\begin{lemma}\label{lem:hay}
Condition (\textit{ii}) in Theorem \ref{thm:known_rho} holds for Zariski-generic $\{B_{p,u,v}\}$.  If $S(L) \geq 2L+1$, then condition (\textit{iii}) holds for Zariski-generic $\{A_{\ell, m, s} \}$.  At least for $L \leq 100$, conditions (\textit{i}) and ($iii$) hold for Zariski-generic $\{B_{p,u,v}\}$.
\end{lemma}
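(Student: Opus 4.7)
The plan is to observe that each of conditions (i)--(iv) in Theorem~\ref{thm:known_rho} is Zariski-open on the relevant parameter space. Indeed, full rank of a square matrix is the non-vanishing of its determinant; full column (or row) rank of a rectangular matrix is the non-vanishing of at least one maximal minor; distinctness of eigenvalues is the non-vanishing of the discriminant of the characteristic polynomial; and non-vanishing of a vector entry is the non-vanishing of a coordinate function. Since the complex affine space of parameters is irreducible, any non-empty Zariski-open subset is dense, so each of the three claims reduces to exhibiting a single parameter choice where the condition holds.

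For the second claim (which I read as condition (iv) under $S(L) \geq 2L+1$, since condition (iii) depends only on $B$), I first note that the PSWF cutoffs of Appendix~\ref{app:pswf} make $S(\ell)$ non-increasing in $\ell$, so $S(\ell) \geq 2\ell + 1$ for all $\ell \leq L$. A witness is then immediate: pick $\{A_{\ell,m,s}\}$ so that each $\mathcal{A}_\ell \in \mathbb{C}^{S(\ell) \times (2\ell+1)}$ has the identity as its top $(2\ell+1) \times (2\ell+1)$ block, giving full column rank trivially. For condition (ii), I would restrict to the Zariski-open locus where (i) holds, so that $Q_{12}^{\pm 1}$ and $Q_{34}^{\pm 1}$ are locally rational functions of $B$ and every entry of $b_{12}$, $b_{34}$ is a rational function of $B$. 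A witness with all entries nonzero is constructed by choosing $B_1 B_2^{-1}$ and $B_3 B_4^{-1}$ to be diagonal with distinct eigenvalues, so that $Q_{12}, Q_{34}$ may be taken to be identity and $b_{12} = \beta^L_L$, $b_{34} = \beta^{L-1}_{L-1}$. One then checks that the $B_{p,u,v}$ coefficients feeding into $\beta^L_L$ (third index $-L$) and into $\beta^{L-1}_{L-1}$ (third index $-(L-1)$) are disjoint from those feeding into $B_1, B_2, B_3, B_4$ (third index $\pm 2L$, $0$, $\pm(2L-2)$), so the two constructions can be carried out simultaneously.

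The subtle part is the claim that conditions (i) and (iii) are generic in $B$, because the matrices $B_1, \ldots, B_4$ and $\mathcal{B}^{\ell,L}_{\ell,L}$ depend linearly on $B$ through products of Clebsch-Gordan coefficients, whose sparsity and sign patterns a priori could confine the images to proper subvarieties of the ambient matrix spaces. The plan here is purely computational: for each $L = 1, \ldots, 100$, sample a random $B$ and numerically verify that $B_1, B_2, B_3, B_4$ are invertible, that $B_1 B_2^{-1}$ and $B_3 B_4^{-1}$ have pairwise distinct eigenvalues (by inspecting the discriminant or a lower bound on eigenvalue gaps), and that each $\mathcal{B}^{\ell,L}_{\ell,L}$ with $\ell \leq L-2$ has full row rank. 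Since the failure locus is Zariski-closed and thus either empty or proper, one certified witness per $L$ forces the open locus to be dense. The main obstacle to removing the $L \leq 100$ restriction would be an analytical control of how the Clebsch-Gordan contractions act on $\{B_{p,u,v}\}$ uniformly in $L$, which would require showing that the linear maps $B \mapsto B_i$ are generically surjective onto full-dimensional matrices and that the associated discriminant and minor polynomials are not identically zero — a genuinely harder combinatorial task that the paper sidesteps by computation.
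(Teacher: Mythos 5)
Your proposal matches the paper's proof in approach: both reduce each condition to Zariski-openness plus the exhibition of a single witness point, treat conditions (\textit{ii}) and (\textit{iv}) as essentially immediate, and fall back on computational verification at randomly sampled points for conditions (\textit{i}) and (\textit{iii}) up to $L \leq 100$ (the paper's proof is in fact terser, declaring (\textit{ii}) and (\textit{iv}) ``obviously generic'' with no witness given, and you correctly read the lemma's second claim as referring to condition (\textit{iv})). The one step where you go beyond what the paper justifies --- arranging $B_1 B_2^{-1}$ and $B_3 B_4^{-1}$ to be diagonal, which presumes unestablished control over the linear maps $B \mapsto B_i$, and the variable-disjointness claim, which has edge cases at small $L$ --- is the only point I would treat with caution, though the underlying observation that the coefficients entering $\beta^L_L$ are generically free given $Q_{12}$ is the right one and suffices.
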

\begin{proof}[Proof of Lemma \ref{lem:hay}]
	Conditions ($i$)-($iv$) are all Zariski-open, \ie, their failure implies $\{A_{\ell,m,s}\}$ or $\{B_{p,u,v}\}$ obey polynomial equations.  As such, to conclude genericity, it suffices to exhibit a single point $\{A_{\ell,m,s}\}$ or $\{B_{p,u,v}\}$, where the conditions are met.  For conditions ($i$), ($iii$), we verified the conditions hold at randomly selected points on computer up to $L \leq 100$.  Conditions ($ii$) and ($iv$) are obviously generic.
\end{proof}

By uniqueness, $A$ is a well-defined function of the first and second moments $m_1$ and $m_2$ almost everywhere.  It is useful to quantify the ``sensitivity'' of $A$ to errors in $m_1, m_2$, as, \eg, in practice one can access only empirical estimates $\widetilde{m}_{1}$ and $\widetilde{m}_{2}$.   An \textit{a posteriori} (absolute) condition number for $A$ is given by the reciprocal of the least singular value of the Jacobian matrix of the algebraic map:
\begin{equation}\label{eqn:poly-map}
{m}_{B}: \,\, \{A_{\ell, m, s} \} \mapsto \big{\{}m_1(q), \, m_2(q_1,q_2)\big{\}}.
\end{equation}
Throughout this section, all condition formulas are in the sense of  \cite[Section~14.3]{condition-book}, for which the domain and image of our moment maps are viewed as Riemannian manifolds.  To this end, when $\rho$ is unknown, dense open subsets of the orbit spaces $\{(A,B) \,\,  \textup{mod SO}(3) \}$, $\{A \,\,  \textup{mod SO}(3) \}$, $\{B \, \, \textup{mod SO}(3) \}$ naturally identify as Riemmannian manifolds (for the construction, see \cite{group-book}).

\subsection{Known, in-plane uniform \texorpdfstring{$\rho$}{distribution}} \label{sec:uni2}
For this case, given a particular image size (and other image parameters), together with band limits $L$ and $P$, we have code\footnote{Available in GitHub: https://github.com/nirsharon/nonuniformMoM/JacobianTest.} which decides if, for generic $A$ and $B$, the molecule $A$ is determined by \eqref{eqn:first_mom_compacy} and \eqref{eqn:second_mom_compacy}, up to finitely many solutions.  The basis for this code is the so-called \textit{Jacobian test} for algebraic maps, see Appendix~\ref{sec:appendix_Jacobian}.  Below is an illustrative computation.
\begin{comp}\label{comp:jacobian}
Consider $43 \times 43$ pixel images, and the following parameters for prolates (representative values): a bandlimit $c$ (see Appendix~\ref{app:pswf}) chosen as the Nyquist frequency, 2-D prescribed accuracy~\eqref{eqn:epsilon_eq} set to $\epsilon = 10^{-3}$ and 3-D truncation parameter~\eqref{eq:PSWFs truncation rule} to be $\delta = 0.99$~\footnote{The value of $\delta$ means we allow only $1\%$ of the energy to be outside the ball, and is chosen to best model a molecule structure which is assumed to be mostly supported inside a ball.}. We varied band limits $L$ in \eqref{eqn:phi_hat} and $P$ in \eqref{eqn:distribution_extansion}, and randomly fixed \eqref{eqn:distribution_extansion} to give a known in-plane uniform distribution. For each $(L,P)$, we computed the numerical rank of the Jacobian matrix of the polynomial map $m_B$ of \eqref{eqn:poly-map} at a randomly chosen $A$, with random $B$. The Jacobian was convincingly of full numerical rank for a variety of band limits, as seen in Table~\ref{table:jac-kn-inp}. Cases where the gap between the two least singular values of the Jacobian matrix exceeds a threshold of $10^6$ are set as indecisive numerics, and appears in the table as \qmark. Note that if the rank was calculated in exact arithmetic, this gives a proof that for generic $(A,B)$ generic fibers of the map $m_{B}$ consist of finitely many $A$; \ie, first and second moments (with known in-plane uniform distribution) determine the molecule up to finitely many solutions. For fibers and related definitions, see Appendix~\ref{sec:appendix_Jacobian}.
\end{comp}

Again, the sensitivity of $A$ as a locally defined function of~\eqref{eqn:first_mom_compacy} and~\eqref{eqn:second_mom_compacy} is quantified by the reciprocal of the least singular value of the Jacobian matrix of $m_{B}$.

\subsection{Unknown, totally non-uniform \texorpdfstring{$\rho$}{distribution}} \label{sec:uni3}
In this case, it is important to note that solutions come in symmetry classes.  If $(A,B)$ have specified moments, then so too for $(R \cdot A, R \cdot B)$ for all $R \in \textup{SO}(3)$, that is, we may jointly rotate the molecule and probability distribution and the moments are left invariant.  So, solutions come in $3$-dimensional equivalence classes, and we are interested in solutions modulo $\textup{SO}(3)$.

That said, we have code which accepts a particular image size (and other image parameters), together with band limits $L$ and $P$.  The code then numerically decides which of the following situations occur: \textit{i)} for generic $(A,B)$, both $A$ and $B$ are determined by \eqref{eqn:first_mom_compacy} and \eqref{eqn:second_mom_compacy} up to finitely many solutions modulo $\textup{SO}(3)$; \textit{ii)} for generic $(A,B)$, the molecule $A$ is determined by \eqref{eqn:first_mom_compacy} and \eqref{eqn:second_mom_compacy} up to finitely many solutions modulo $\textup{SO}(3)$, whereas the distribution $B$ has infinitely many solutions; \textit{iii)} for generic $(A,B)$, both $A$ and $B$ have infinitely many solutions modulo $\textup{SO}(3)$.  Note these cases are (essentially) exhaustive, since if $B$ is determined so is $A$ in the regime of Theorem \ref{thm:known_rho}.  Moreover, we noticed the case \textit{ii)} really does arise, \eg, this seems to happen when $P=2L$.
\begin{comp}\label{comp:jacobian2}
We keep the running example of $43 \times 43$ pixel images, and the prolates parameters of a bandlimit $c$ chosen as the Nyquist frequency, 2-D prescribed accuracy~\eqref{eqn:epsilon_eq} set to $\epsilon = 10^{-3}$ and 3-D truncation parameter~\eqref{eq:PSWFs truncation rule} of $\delta = 0.99$. We varied band limits $L$ in \eqref{eqn:phi_hat} 
and $P$ in \eqref{eqn:distribution_extansion}. For each $(L,P)$, we computed the numerical rank of the Jacobian matrix of the polynomial map
\begin{equation}\label{eqn:moment-map2}
{m}: \,\, \{A_{\ell, m, s}, B_{p, u, v} \} \mapsto \big{\{}m_1(q), \, m_2(q_1,q_2)\big{\}}.
\end{equation}
at a randomly chosen point in the domain.  The numerical rank of the Jacobian convincingly equaled three less (that is $d_1 = 3$, see Appendix~\ref{sec:appendix_Jacobian}) than full column rank for a variety of band limits, see Table \ref{table:jac-unk-tot}. Cases where the gap between the third and fourth least singular values of the Jacobian matrix exceeds a threshold of $10^6$ are set as indecisive numerics, and appears in the table as \qmark. If the rank were calculated in exact arithmetic, this furnishes a proof that generic fibers of the map $m$ consist of finitely many $\textup{SO}(3)$-orbits; that is, first and second moments determine both the molecule and the totally non-uniform distribution up to finitely many solutions (modulo global rotation).
\end{comp}

For band limits $L$ and $P$ such that generically there are only finitely many solutions for $(A,B)$ mod $\textup{SO}(3)$, the sensitivity of $(A,B) \textup{ mod SO}(3)$ as a (locally defined) function of \eqref{eqn:first_mom_compacy} and \eqref{eqn:second_mom_compacy}  is quantified by the reciprocal of the fourth least singular of $m$.
For band limits such that generically there are only finitely many solutions for $A$ mod $\textup{SO}(3)$, the sensitivity of $A \textup{ mod SO}(3)$ as a locally defined of  \eqref{eqn:first_mom_compacy} and \eqref{eqn:second_mom_compacy} is quantified by the reciprocal of the fourth least singular value of
\begin{equation} \label{eqn:Jacobian}
\mathcal{P}_{A}\,  \textup{Jac}(m \rvert_{(A,B)})^{\dagger}
\end{equation}
where $\dagger$ denotes pseudo-inverse and $\mathcal{P}_{A}$ is the differential of $(A,B) \mapsto A \textup{ mod SO}(3)$. We compute~\eqref{eqn:Jacobian} by analytically differentiating~\eqref{eqn:first_mom_compacy} and~\eqref{eqn:second_mom_compacy}, evaluating at $(A,B)$ and place as diagonal blocks of a matrix, and finally applying pseudo-inverse which is SVD-based.

\subsection{Unknown, in-plane uniform \texorpdfstring{$\rho$}{distribution}} \label{sec:uni4}
Again in this case, solutions come in $3$-symmetry classes, orbits under the action of global rotation, so we are interested in solutions modulo $\textup{SO}(3)$.
We have code which accepts a particular image size (and other image parameters), together with band limits $L$ and $P$, and numerically decides if for generic $(A,B)$, both $A$ and $B$ are determined by \eqref{eqn:first_mom_compacy} and \eqref{eqn:second_mom_compacy} up to finitely many solutions modulo $\textup{SO}(3)$, or if there are infinitely many solutions.  We did not find parameters giving a ``mixed'' result as in case \textit{ii)} above.  
\begin{comp}\label{comp:jacobian3}
For $43 \times 43$ pixel images, and the parameters for prolates (representative values): a bandlimit $c$ chosen as the Nyquist frequency, 2-D prescribed accuracy~\eqref{eqn:epsilon_eq} set to $\epsilon = 10^{-3}$ and 3-D truncation parameter~\eqref{eq:PSWFs truncation rule} of $\delta = 0.99$. We varied band limits $L$ in \eqref{eqn:phi_hat} 
and $P$ in \eqref{eqn:distribution_extansion}, restricting \eqref{eqn:distribution_extansion} to an in-plane uniform distribution. For each $(L,P)$, we computed the numerical rank of the Jacobian matrix of the polynomial map:
\begin{equation}\label{eqn:moment-map3}
{m}: \,\, \{A_{\ell, m, s}, \, B_{p, u, 0} \} \,  \mapsto \big{\{}m_1(q), \, m_2(q_1,q_2)\big{\}}.
\end{equation}
at a randomly chosen point in the domain.  The numerical rank of the Jacobian convincingly equaled three less than full column rank for a variety of band limits, see Table \ref{table:jac-ukn-inp}. Cases where the gap between the third and fourth least singular values of the Jacobian matrix exceeds a threshold of $10^6$ are set as indecisive numerics, and appears in the table as \qmark. If the rank was calculated in exact arithmetic, this furnishes a proof that generic fibers of the map $m$ consist of finitely many $\textup{SO}(3)$-orbits; that is, first and second moments determine both the molecule and the in-plane uniform distribution up to finitely many solutions (modulo global rotation).
\end{comp}

For band limits $L$ and $P$ such that generically there are only finitely many solutions for $(A,B)$ mod $\textup{SO}(3)$, the sensitivity of $(A,B) \textup{ mod SO}(3)$ as a  function of moments  is quantified by the reciprocal of the fourth least singular of $m$. For example, in the $P=2$ row of Table \ref{table:jac-ukn-inp}, when evaluating at random $(A,B)$, this worked out to: 
\[ 1.98 \times 10^{15}, \quad 47.1,   \quad  209,    \quad  2700, \quad  4.66 \times 10^4, \quad 1.17 \times 10^6,  \quad 6.02 \times 10^7,  \quad 9.10 \times 10^8 . \]
Further, in the $L=4$ column of Table 1, evaluating at random $(A,B)$ gave:
\[ 1.44 \times 10^{16}, \quad 2.15 \times 10^{15}, \quad 209,  \quad 154, \quad 1360. \]
In practice, we run this refined Jacobian test (takes $< 1$ minute on a standard laptop) to identify well-conditioned band limits $L$ and $P$ before we attempt non-convex optimization.

\begin{table}[ht]
    \centering
    \caption{\textit{Uniqueness for inverting the first two moments in the case of a known, in-plane uniform $\rho$, according to band limits.}  Generically finitely many solutions for $A$ is denoted by \cmark, infinitely many solutions for $A$ is denoted by \xmark, and indecisive numerics is denoted by \qmark.} 
    \vspace{0.5em}
    \renewcommand{\arraystretch}{1.2}
    \begin{tabular}{l c c c c c c c c c}
    \toprule
         & $L = 2$  & $L = 3$ & $L = 4$ & $L = 5$ & $L = 6$ & $L=7$ & $L=8$ & $L=9$ & $L=10$\\
    \midrule
    $ P  = 0$ & \xmark  & \xmark & \xmark & \xmark & \xmark & \xmark & \xmark & \xmark & \xmark\\
    $P  = 1$ & \xmark  & \xmark & \qmark & \cmark & \cmark & \cmark & \cmark & \cmark & \cmark\\
    $ P = 2$ & \xmark  & \cmark & \cmark & \cmark & \cmark & \cmark & \cmark & \cmark & \cmark \\
    $ P  = 3$ & \xmark  & \cmark & \cmark & \cmark & \cmark & \cmark & \cmark & \cmark & \cmark\\
    $ P  = 4$ & \xmark  & \cmark & \cmark & \cmark & \cmark & \cmark & \cmark & \cmark & \cmark\\
    \bottomrule
    \end{tabular}
    \label{table:jac-kn-inp}
\end{table}

\begin{table}[ht]
    \centering
     \caption{\textit{Uniqueness for inverting the first two moments in the case of an unknown, totally non-uniform $\rho$, according to band limits.}  Generically finitely many solutions for $(A,B) \textup{ mod SO(3)}$ is denoted by \cmark, finitely many solutions for $A \textup{ mod SO(3)}$ but infinitely many solutions for $B \textup{ mod SO(3)}$ is denoted by \tmark, infinitely many solutions for $A \textup{ mod SO(3)}$ is denoted by \xmark, and indecisive numerics is denoted by \qmark.}
    \vspace{0.5em}
    \renewcommand{\arraystretch}{1.2}
    \begin{tabular}{l c c c c c c c c c}
    \toprule
         & $L = 2$  & $L = 3$ & $L = 4$ & $L = 5$ & $L = 6$ & $L=7$ & $L=8$ & $L=9$ & $L=10$\\
    \midrule
    $ P  = 0$ & \xmark  & \xmark & \xmark & \xmark & \xmark & \xmark & \xmark & \xmark & \xmark\\
    $P  = 1$ & \cmark  & \cmark & \cmark & \cmark & \cmark & \cmark & \cmark & \cmark & \qmark\\
    $ P = 2$ & \cmark  & \cmark & \cmark & \cmark & \cmark & \cmark & \cmark & \cmark & \cmark \\
    $ P  = 3$ & \cmark  & \cmark & \cmark & \cmark & \cmark & \cmark & \cmark & \cmark & \cmark\\
    $ P  = 4$ & \tmark  & \cmark & \cmark & \cmark & \cmark & \cmark & \cmark & \cmark & \cmark\\
    \bottomrule
    \end{tabular}
    \label{table:jac-unk-tot}
\end{table}

\begin{table}[ht]
    \centering
     \caption{\textit{Uniqueness for inverting the first two moments in the case of an unknown, in-plane uniform $\rho$, according to band limits.}  Generically finitely many solutions for $(A,B) \textup{ mod SO(3)}$ is denoted by \cmark, infinitely many solutions for $A \textup{ mod SO(3)}$ and $B \textup{ mod SO(3)}$ is denoted by \xmark, and indecisive numerics is denoted by \qmark.}
    \vspace{0.5em}
    \renewcommand{\arraystretch}{1.2}
    \begin{tabular}{l c c c c c c c c c}
    \toprule
         & $L = 2$  & $L = 3$ & $L = 4$ & $L = 5$ & $L = 6$ & $L=7$ & $L=8$ & $L=9$ & $L=10$\\
    \midrule
    $ P  = 0$ & \xmark  & \xmark & \xmark & \xmark & \xmark & \xmark & \xmark & \xmark & \xmark\\
    $P  = 1$ & \xmark  & \xmark & \xmark & \xmark & \xmark & \xmark & \xmark & \xmark & \xmark\\
    $ P = 2$ & \xmark  & \cmark & \cmark & \cmark & \cmark & \cmark & \qmark & \qmark & \qmark \\
    $ P  = 3$ & \xmark  & \cmark & \cmark & \cmark & \cmark & \cmark & \cmark & \qmark & \qmark\\
    $ P  = 4$ & \xmark  & \cmark & \cmark & \cmark & \cmark & \cmark & \cmark & \qmark & \qmark\\
    \bottomrule
    \end{tabular}
    \label{table:jac-ukn-inp}
\end{table}

\section{Numerical Optimization and First Visual Examples} \label{sec: implementation details}

After studying the theoretical properties of the polynomial system which is defined by the first two moments, we discuss in this section aspects of numerically inverting the polynomial map via optimization.  

\subsection{Incorporating natural constraints in optimization} \label{subse:constraints_in_optimization}

When determining the coefficients $A = \{A_{\ell,m,s}\}_{\ell,m,s}$  and $B = \{B_{p,u,v}\}_{p,u,v}$, the search space has to be restricted in order to ensure the coefficients stem from some physical volume and density. 
\subsubsection{Constraints on the volume}
To ensure the volume $\phi\colon \mathbb{R}^3\rightarrow \mathbb{R}$ is a real-valued function, one has to ensure its Fourier transformation $\widehat \phi: \mathbb{R}^3 \rightarrow \mathbb{C}$ satisfies conjugate symmetry $\hat{\phi}(\kappa,\theta,\varphi) = \bar{\hat{\phi}}(\kappa,\pi-\theta,\pi+\varphi)$. That is, in spherical coordinates, 
\begin{equation*}
\sum_{\ell=0}^L \sum_{m=-\ell}^\ell \sum_{s=1}^{S(\ell)}  \overline{A_{\ell,m,s} Y^m_\ell(\theta,\varphi)F_{\ell,s}(\kappa)} =
\sum_{\ell=0}^L \sum_{m=-\ell}^\ell \sum_{s=1}^{S(\ell)} A_{\ell,m,s} Y^m_\ell(\pi-\theta,\pi+\varphi)F_{\ell,s}(\kappa) .
\end{equation*}
Assuming the basis $\{F_{\ell,s}\}$ is a set of real-valued functions, along with the facts that
$ \overline{Y^m_\ell(\theta,\varphi)} = (-1)^m Y^{-m}_\ell(\theta,\varphi) $ and 
$ Y^m_\ell(\pi-\theta,\pi+\phi) = (-1)^\ell Y^{m}_\ell(\theta,\phi)$, 
we get
\begin{equation*}
\sum_{\ell,m,s} \overline{A_{\ell,-m,s}} (-1)^{-m} Y_\ell^{-m}(\theta,\varphi)F_{\ell,s}= \sum_{\ell,m,s} A_{\ell,m,s}  (-1)^\ell Y^m_\ell(\theta,\varphi) F_{\ell,s}
\end{equation*}
This further implies
\begin{equation} \label{eqn:reality_cons}
\overline{A_{\ell,m,s}} (-1)^{-m} = A_{\ell,-m,s} (-1)^\ell.
\end{equation}
Having such relationships, $\{A_{\ell,m,s}\}_{\ell,m,s}$ can thus be written in terms of some real coefficients $\{\alpha_{\ell,m,s}\}_{\ell,m,s}$ as:
\begin{equation} \label{eqn:real_A_coefs}
A_{\ell,m,s} = 
\begin{cases}
\alpha_{\ell,m,s}  - i(-1)^{l+m}\alpha_{\ell,m,s}, & m>0,\\
i^l \alpha_{\ell,m,s} & m=0,\\
(-1)^{l+m} \alpha_{\ell,m,s}+ i\alpha_{\ell,m,s}, & m<0.\\
\end{cases}
\end{equation}
The latter means that instead of solving a complex optimization problem in terms of the coefficients $A_{\ell,m,s}$, one can work with the real coefficients $\alpha_{\ell,m,s}$ of ~\eqref{eqn:real_A_coefs}. Otherwise, the equality constraints~\eqref{eqn:reality_cons} are required.
\subsubsection{Constraints on the density}
Similarly, to ensure the density $\rho$ being a real-valued function, we need to ensure 
\begin{equation}
\sum_{p=0}^P \sum_{u=-p}^p\sum_{v=-p}^p   B_{p,u,v} U^p_{u,v}(R) = \sum_{p=0}^P \sum_{u=-p}^p\sum_{v=-p}^p  \overline{B_{p,u,v} U^p_{u,v}(R)}.
\end{equation}
The fact that $ \overline{U^p_{u,v}(R)} = (-1)^{v-u}U^p_{-u,-v}(R)$ leads to
\begin{equation} \label{eqn:reality_cons_rho}
B_{p,u,v} = (-1)^{u-v} \overline{B_{p,-u,-v}}.
\end{equation}
Again, from such relationships, it can be shown that an alternative to~\eqref{eqn:reality_cons_rho} can be written in terms of real coefficients $\beta_{p,u,v}$:
\begin{equation}
B_{p,u,v} = 
\begin{cases}
\beta_{p,u,v} + (-1)^{u-v} i \beta_{p,-u,-v}, &(u,v) \succ_{\textup{lex}} (0,0), \\
\beta_{p,0,0}, &(u,v) = 0, \\
\beta_{p,u,v} - (-1)^{u-v} i \beta_{p,-u,-v}, &(u,v) \prec_{\textup{lex}} (0,0).
\end{cases}
\end{equation}
Here, $\prec_{\textup{lex}}$ is the lexicographical order, that is 
$(u_1,v_1) \prec_{\textup{lex}} (u_2,v_2)$ iff $u_1<u_2$ or both $u_1=u_2$ and $v_1<v_2$.

Two additional constraints are required. First, the integral of any density function is one. To ensure such a correct normalization, we simply let
\begin{equation}\label{eq:B_norm}
B_{0,0,0} = \int \sum_{p=0}^P \sum_{u=-p}^p \sum_{v=-p}^p B_{p,u,v} U^p_{u,v}(R) dR = 1,
\end{equation}
which means it is no longer considered as unknown. Finally, the nonnegativity of the density is ensured via a collocation method, that is requiring
\begin{equation} \label{eqn:positivity_cons}
\rho(R_i) = \sum_{p,u,v} B_{p,u,v} U^p_{u,v}(R_i)\geq 0 ,
\end{equation}
for $R_i$'s on a near uniform, refined grid on $\so{3}$. While~\eqref{eqn:positivity_cons} does not prevent the density from becoming negative off the $\so{3}$ grid, requiring the density to be non-negative entirely on $\so{3}$ leads to an optimization problem that is much more costly to solve in practice.  Note that we do not enforce positivity of $\rho$ by requiring it to be a sum-of-squares, as, \eg, already in the case of an in-plane uniform distribution on the sphere $S^2 \subset \mathbb{R}^3$, not all nonnegative polynomials may be written as a sum-of-squares, see Motzkin's example when $P=6$ \cite{Motzkin}.

\subsection{Accommodating invariance to in-plane rotations}\label{sec:in-planeCon}
While molecules typically exhibit preferred orientations, there is no physical reason why molecules should have preferred \textit{in-plane} orientations. In this section, we focus on the case of non-uniform rotational distributions invariant to in-plane rotations since these distributions better model real cryo-EM data sets.

For simplicity, we fix the image plane as perpendicular to the $z$-axis. We add the prior that the density for drawing $R$ equals the density for drawing $R z(\alpha)$, for all $R \in \so{3}$ and all rotations $z(\alpha)$ of $\alpha \in \mathbb{R}$ radians about the $z$-axis.  This assumption reads
\begin{equation}
\rho(R) = \rho\left( Rz(\alpha)\right) \quad  R \in \so{3}, \quad \alpha \in \mathbb{R}.
\end{equation}
Therefore,
\begin{align}
\sum_{p,u,v} B_{p,u,v} \, U^{p}_{uv}(R) \,  &= \, \sum_{p,u,v} B_{p,u,v} \, U^{p}_{uv}(Rz(\alpha)) \\
&= \, \sum_{p,u,v} B_{p,u,v} \, \Big{(}U^{p}(R)U^{p}\big{(}z(\alpha)\big{)}\Big{)}_{uv} 
\end{align}
Here we used the group representation property of $U^{p}$.  Checking explicitly the action of $z(\alpha)$ on degree $p$ spherical harmonics,
\begin{equation}
U^{p}\big{(}z(\alpha)\big{)} = \textup{diag}(e^{-ip\alpha}, e^{-i(p-1)\alpha}, \ldots, e^{ip\alpha}).
\end{equation}
So continuing the above,
\begin{equation}
\sum_{p,u,v} B_{p,u,v} \, U^{p}_{uv}(R) =  \sum_{p,u,v} B_{p,u,v} \, U^{p}_{uv}(R) e^{iv\alpha}, \quad R \in \so{3}, \quad \alpha \in \mathbb{R}.
\end{equation}
This is equivalent to $B_{p,u,v} = 0$ for $v \neq 0$ where $v$ ranges over $-p,-p+1, \ldots, p$.  To sum, we have found that in-plane invariance is captured by:
\begin{equation}
d\rho(R) = \sum_{p, u} B_{p,u,0} \, U^{p}_{u0}(R) \, dR 
\end{equation}
For a sanity check, a distribution with in-plane invariance should sample a rotation with density only depending on which point maps to the north pole. Namely, $\rho(R)$ should only depend on the last column of $R$, that is, $R(:,3) = R_{\bullet3}$. Indeed, this holds as $U^{p}_{u0}(R) = (-1)^{u} \sqrt{\frac{4\pi}{2l+1}} \overline{Y^{u}_{p}}(R_{\bullet3})$ \cite[Eqn. 9.44, Pg. 342]{chirikjian2016harmonic}.

Restricting the expansion of $\rho$ as above, we easily see the first moment is independent of $\varphi$.  It is now merely a linear combination of basis functions $F_{\ell,s}(\kappa)$. Likewise, for the second moment, angular dependency is only on the difference $\varphi_{1} - \varphi_{2}$, meaning it is a linear combination of basis functions $e^{i m (\varphi_{1} - \varphi_{2})} F_{\ell_1,s_{1}}(\kappa_{1}) F_{\ell_2,s_{2}}(\kappa_{2})$. Thus, in subsection~\eqref{sec:uni4}, we have the following polynomial map, now with fewer $B$ variables and fewer invariants than in subsection~\eqref{sec:uni3}
\begin{equation}
    m \colon \{A_{\ell, m, s}, B_{p,u,0} \} \mapsto \{ \, \mathbb{E}_R[a^R_{0,t}] , \quad \mathbb{E}_R[a^R_{q_1,t_1}a^R_{-q_1,t_2}]  \, \}    . 
\end{equation}

\subsection{Direct method -- known totally non-uniform distribution}
 
 For the ``easy'' case of a known, totally non-uniform distribution, we have implemented the provable algorithm in Theorem~\ref{thm:known_rho}.  The method's performance is illustrated by way of an example.  
 As the ground truth volume, we use EMD-0409, that is, the catalytic subunit of protein kinase A bound to ATP and IP20~\cite{herzik2019high}, as presented at the online cryo-EM data-bank~\cite{lawson2015emdatabank}. 
 The volumetric array's original dimension is $128$ voxels in each direction, which we downsampled by a factor of three to $43$.  
 The volume was expanded using PSWFs with a band limit $c$ chosen to be the Nyquist frequency and 3-D truncation parameter~\eqref{eq:PSWFs truncation rule} of $\delta = 0.99$. Before downsampling, the full expansion consists of degree $L = 40$; with downsampling and proper truncation, we aim to recover the terms up to degree $L = 7$.  For the known totally non-uniform distribution, we took $P=14$ (per Theorem~\ref{thm:known_rho}), and then formed a particular distribution using a sums-of-squares.  Precisely, we formed a random linear combination of Wigner entries up to degree $7$, multiplied this by its complex conjugate, invoked~\eqref{eqn:wigner_conj} and~\eqref{eqn:C_asProductOfCGcoefs} to rewrite the result as a linear combination of Wigner entries up to degree $14$, repeated for a second square, added, and finally normalized to satisfy~\eqref{eq:B_norm}.  Then, with the distribution known as such, the volume contributes $1080$ unknowns (without discounting for \eqref{eqn:reality_cons}).  Providing the algorithm with $m_1$ and $m_2$, our method took $0.24$ seconds on a standard laptop, and recovered the unknowns $A$ up to a relative error in $\mathcal{L}^{2}$ norm of $5.4 \times 10^{-11}$.  Visual results are in Figure~\ref{fig:known_dist}.   
 
 \begin{figure}[ht]
 	\begin{center}
 		\includegraphics[width=.4\linewidth]{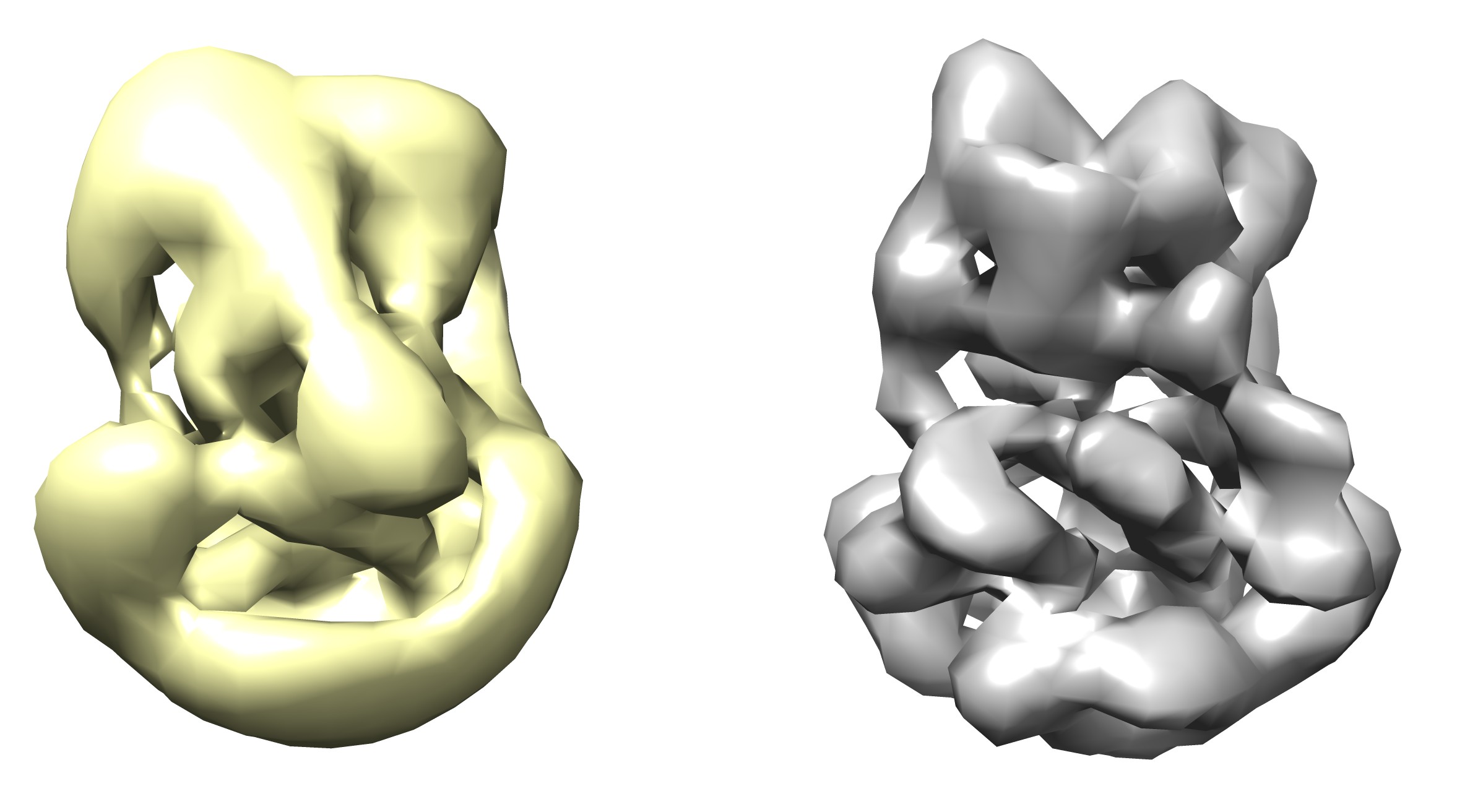} \qquad
 		\includegraphics[width=.4\linewidth]{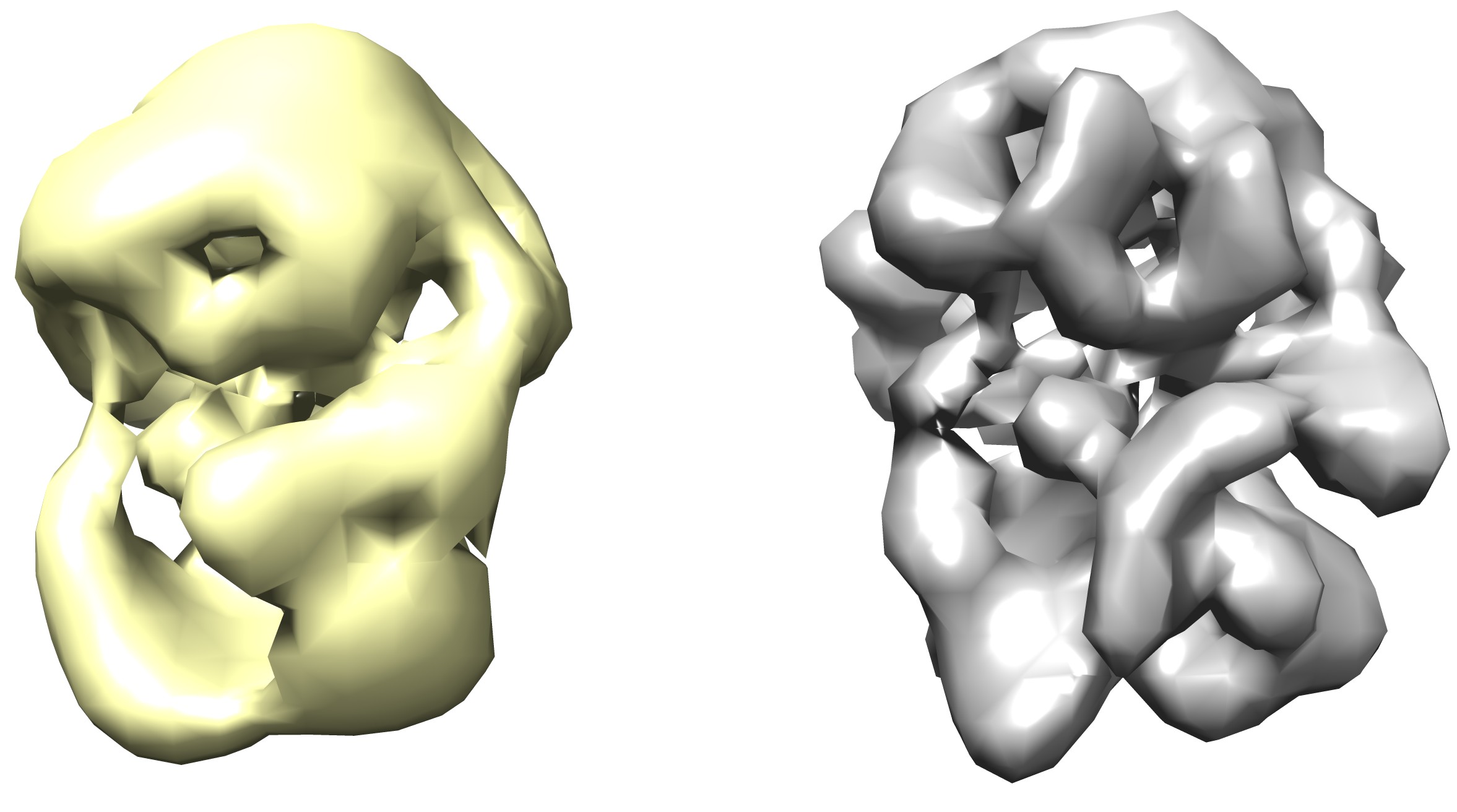} \\
 		View 1 \qquad  \qquad \qquad \qquad \qquad  \qquad \qquad View 2 
 		\caption{Two views of the reconstruction as provided by the algorithm of Theorem~\ref{thm:known_rho} to the case of known, totally non-uniform distribution. The ground truth volume appears on the right of each pair (in gray), whereas the lower degree estimation resulting from the downsampled volume appears on the left (in yellow). Note that the estimation is visually identical to the truncated volume, and it thus illustrates the effect of truncation.}   
 		\label{fig:known_dist}
 	\end{center}
 \end{figure}


\subsection{Setting up a least-squares formulation}

For the cases where we lack a direct method, we formulate the problem in terms of minimizing a least-squares cost function. First, we define the unknowns of our optimization process to be the coefficients of the volume $A = \{A_{l,m,s}\}$ and distribution $B = \{B_{p,u,v}\}$. The explicit formulas~\eqref{eqn:first_mom_compacy} and \eqref{eqn:second_mom_compacy} provide means to write the low-order moments~\eqref{eqn:analytical moments} as functions of our unknown coefficients, that is $m_1 = m_1(A,B)$ and $m_2 = m_2(A,B)$.

In practice, given data images, one estimates the low-order statistics using the empirical moments $\widetilde{m}_1$ and $\widetilde{m}_2$ of~\eqref{eqn:empirical_moments}, but now given in PSWFs coordinates
\begin{equation} \label{eqn:empirical_mom}
\left(\widetilde{m}_1 \right)_{q,t} = \frac{1}{n}\sum_{j=1}^n a^j_{q,t} \quad \text{  and  } \quad \left(\widetilde{m}_2 \right)_{q_1,t_1,q_2,t_2} = \frac{1}{n} \sum_{j=1}^n a^j_{q_1,t_1} a^j_{q_2,t_2},
\end{equation}
The connection between the empirical moments and their analytical formulas as functions of our unknowns gives rise to a nonlinear least-squares 
\begin{multline} \label{eqn:LS}
\min_{A,B} \sum_{q=-Q}^Q \sum_{t=0}^{T(q)} \, \left(m_1(A,B)_{q,t} - \left(\widetilde{m}_1 \right)_{q,t} \right)^2 \\
+ \lambda \sum_{q_1,q_2=-Q}^Q \, \sum_{t_1,t_2=0}^{T(q)} \, \left(m_2(A,B)_{q_1,t_1,q_2,t_2} - \left(\widetilde{m}_2 \right)_{q_1,t_1,q_2,t_2} \right)^2 \, ,
\end{multline}
where $\lambda$ is a parameter chosen to balance the errors from both terms. In particular, two main considerations determine the value of $\lambda$. First is the number of elements in each summand. Namely, the second moment includes many more entries than the first moment. Therefore, without the effect of noise, $\lambda$ is set to be the ratio between the number of entries in first moment and the second moment.  The second factor to balance is the different convergence rates of the empirical moments, see also \cite{abbe2018multireference}. The nonlinear least-squares~\eqref{eqn:LS} may be adjusted to incorporate the constraints on $\{A_{l,m,s}\}$ and $\{B_{p,u,v}\}$ that ensure $\phi$ is a real-valued volume and $\rho$ a probability density.

We remark that it is interesting to consider pre-conditioners, or more intricate weighings, in the formation of the nonlinear least-squares cost \eqref{eqn:LS}.
Such might alleviate high condition numbers observed in Section~\ref{sec:algebraic geometry}, and potentially accelerate optimization algorithms.
While we have not tested a pre-conditioner in optimization experiments yet, one possibility would be to consider the following normalized cost:

\begin{multline} \label{eqn:LS1}
\min_{A,B} \sum_{q=-Q}^Q \sum_{t=0}^{T(q)} \, \left(m_1(A,B)_{q,t} - \left(\widetilde{m}_1 \right)_{q,t} \right)^2 \Big{/} \, \left( \widetilde{m}_1 \right)_{q,t}^2  \\
+ \lambda \sum_{q_1,q_2=-Q}^Q \, \sum_{t_1,t_2=0}^{T(q)} \, \left(m_2(A,B)_{q_1,t_1,q_2,t_2} - \left(\widetilde{m}_2 \right)_{q_1,t_1,q_2,t_2} \right)^2  \Big{/} \, \left( \widetilde{m}_2 \right)_{q_1, t_1, q_2, t_2}^2.
\end{multline}
Effectively, \eqref{eqn:LS1} scales each polynomial in $(A,B)$ given by $m_1$ and $m_2$ to take value \nolinebreak $1$.

\subsection{Complexity analysis of inverting the moments via gradient-based optimization} \label{subsec:complexity}

Before moving forward to further numerical examples, we state the computational load of minimizing the least-squares cost function~\eqref{eqn:LS}. It is worth noting that in many modern \textit{ab initio} algorithms, like SGD~\cite{punjani2017cryosparc} and EM~\cite{scheres2012relion}, the runtime of each iteration is measured with respect to the size of the set of data images, which can be huge. In our approach, we only carry out one pass over the data to collect the low-order statistics. In here, we assume the empirical moments are already given, and so the complexity of each iteration is merely a function of the size of the moments or equivalently depends on the size and resolution of the data images, as reflected by their PSWF representations.

Many possible algorithms exist to minimize the least squares problem~\eqref{eqn:LS}, for example direct gradient descent methods, such as trust-region~\cite{powell1990trust}, or alternating approaches, including alternating stochastic gradient descent. Here, we present the complexity of evaluating the cost function and its gradient, regardless of the specific algorithm or implementation one wishes to exploit.

For simplicity, denote by $S$ and $T$ two bounds for the radial indices $S(\ell)$ and $T(q)$ of the 3-D and 2-D PSWF expansions, respectively. Typically, it is sufficient to take $S=S(0)$ and $T=T(0)$, as radial degree decreases as overall degree ($\ell$) increases. 

Starting from the first moment~\eqref{eqn:first_mom_compacy}: with a fixed $\ell$ we have to apply two matrix-vector products in a row which requires an order of $\mathcal{O}\left( S \ell + TS \right)$ arithmetic operations. The variable $\ell$ increases up to $L$, which sums up to a total of $L \cdot \mathcal{O}\left( S \ell + TS \right) = \mathcal{O}\left( LS (L + T) \right)$. The gradient uses the precomputed remainder $m_1(A,B) - \left(\widetilde{m}_1 \right)$ and is calculated by two terms with similar complexity as the above. Namely, the cost of both evaluation and gradient calculations is again $\mathcal{O}\left( LS (L + T) \right)$.

For the second moment, we follow~\eqref{eqn:second_mom_compacy}: establishing $\Gamma^{q}_{\ell} \, \mathcal{A}_{\ell}$ is done in $\mathcal{O}\left(TSL\right)$ and applying the product in $\mathcal{O}\left(TL^2\right)$. Overall, the evaluation is bounded by 
 \begin{equation}
 \mathcal{O}\left(L^2(TSL + TL^2)\right) = \mathcal{O}\left(TL^3 (S+L)\right). 
 \end{equation}
The gradient is a bit more complicated, in short, there are two terms for the volume derivatives and one term for the distribution part, with the precomputed remainder $m_2(A,B) - \left(\widetilde{m}_2 \right)$ we get an overall complexity of $\mathcal{O}\left(L^2 S (L^2+T^2+TL)\right)$. In summary, the first moment requires third-order complexity with respect to the different parameters where the second moment requires a total power of five.

Finally, the parameters $T$, $S$, and $L$ can be described by the PSWF representation: the length $L$ of the 3-D PSWF expansion and the bound on the radial indices $S$ are related to the parameter $c$ of sampling rate, and are bounded according to~\eqref{eq:Asymptotic number of terms}. Additional bound, now on the radial 2-D expansion $T$, uses the accuracy parameter $\epsilon$ of the 2-D images and the above $L$ as given in~\eqref{eqn:epsilon_eq}. For more details on those parameters, see Appendix~\ref{app:pswf}.
 
\subsection{Remark on using semidefinite programming (SDP) relaxation} 
Solving the nonlinear least-squares problem in Eq. \eqref{eqn:LS} could suffer from slow convergence because the cost function is a polynomial of degree 6. We remark that in principle, it is possible to apply a semidefinite programming relaxation to facilitate the optimization. For convenience, let the second moments $m_2(A,B)_{q_1,t_1,q_2,t_2}$ be summarized as
\begin{equation}
m_2(A,B) _{q_1,t_1,q_2,t_2}:= G_{q_1,t_1,q_2,t_2}(A A^T \otimes B)
\end{equation}
where $G_{q_1,t_1,q_2,t_2}(\cdot)$ is a linear operator that captures the RHS of Eq. \eqref{eqn:second_momv2}. If we define
\begin{equation*}
\bar A = A A^T,
\end{equation*}
the optimization problem can be written as 
\begin{multline*} 
\min_{\substack{A,\bar A, B\\ \bar A = A A^T}} \sum_{q=-Q}^Q \sum_{t=0}^{T(q)} \, \left(m_1(A,B)_{q,t} - \left(\widetilde{m}_1 \right)_{q,t} \right)^2 \\
+ \lambda \sum_{q_1,q_2=-Q}^Q \, \sum_{t_1,t_2=0}^{T(q)} \, \left(G_{q_1,t_1,q_2,t_2}(\bar A \otimes B) - \left(\widetilde{m}_2 \right)_{q_1,t_1,q_2,t_2} \right)^2 \, .
\end{multline*}
To deal with the non-convex constraint $\bar A = A A^T$, we propose the following relaxed constraint 
\begin{equation}\label{eqn:relaxation}
\bar A \succeq AA^T ,
\end{equation}
which gives the following non-linear least squares problem 
\begin{multline} \label{eqn:LS3}
\min_{\substack{A,\bar A, B\\ \bar A \succeq A A^T}} \sum_{q=-Q}^Q \sum_{t=0}^{T(q)} \, \left(m_1(A,B)_{q,t} - \left(\widetilde{m}_1 \right)_{q,t} \right)^2 \\
+ \lambda \sum_{q_1,q_2=-Q}^Q \, \sum_{t_1,t_2=0}^{T(q)} \, \left(G_{q_1,t_1,q_2,t_2}(\bar A \otimes B) - \left(\widetilde{m}_2 \right)_{q_1,t_1,q_2,t_2} \right)^2 \, .
\end{multline}
Comparing with \eqref{eqn:LS}, although \eqref{eqn:LS3} is still a non-convex problem, the degree of the polynomial in the cost function of \eqref{eqn:LS3} is 4 (instead of 6).  Furthermore, one can solve \eqref{eqn:LS3} efficiently by minimizing $(A, \bar A)$ and $B$ in an alternating fashion. Therefore if at the optimum $\bar A \approx AA^T$ in spite of the relaxation \eqref{eqn:relaxation}, solving \eqref{eqn:LS3} can be advantageous. 

We remark on the special case when the density coefficient $B$ is given. In this situation, one can consider an SDP relaxation 
\begin{equation} \label{eqn:nuc_norm}
     \begin{alignat}{2}
	&\!\min_{\substack{A,\bar A,\\ \bar A \succeq A A^T}}        &\qquad& \text{Tr}(\bar A)     \\
	&\text{subject to} &      & \abs{ m_1(A,B)_{q,t} - \left(\widetilde{m}_1 \right)_{q,t} } \leq \epsilon_{q,t}, \quad 0 \leq t \leq T(q),\quad -Q\leq q \leq Q,  \nonumber  \\
	&                  &      & \abs{G_{q_1,t_1,q_2,t_2}(\bar A \otimes B) - \left(\widetilde{m}_2 \right)_{q_1,t_1,q_2,t_2} } \leq \epsilon_{q_1,t_1,q_2,t_2},  \nonumber \\
	 &                  &      &0 \leq t_1 \leq T(q_1),\quad 0 \leq t_2 \leq T(q_2),\quad -Q\leq q_1, q_2 \leq Q. \nonumber
	\end{alignat}
\end{equation}

The nuclear norm minimization strategy as in matrix completion~\cite{candes2010matrix} is used to promote $\bar A$ to be of rank-1. We test the SDP in \eqref{eqn:nuc_norm} when given a fixed $B_0$. We generate $B_0$ for a non-uniform distribution from a 6-th degree nonnegative polynomial over the rotation group, i.e. letting $P=6$. We generate a volume with random coefficients $A_0$ with $L=3$. Noise is added to the moments in the following manner:
\begin{align*} 
 \left(\widetilde{m}_1 \right)_{q,t} & = m_1(A_0,B_0)_{q,t} + \abs{ m_1(A_0,B_0)_{q,t}} z_{q,t}, \\
 \left(\widetilde{m}_2 \right)_{q_1,t_1,q_2,t_2} & = G_{q_1,t_1,q_2,t_2}(A_0 A_0^* \otimes B_0) + \abs{G_{q_1,t_1,q_2,t_2}(A_0 A_0^* \otimes B_0)} z_{q_1,t_1,q_2,t_2} .
 \end{align*}
 Where 
 \[z_{q,t}, z_{q_1,t_1,q_2,t_2} \sim \text{Uniform}[-\epsilon,\epsilon] , \] 
 and
 \[
  0 \leq t \leq T(q),\ -Q\leq q \leq Q,\ 0 \leq t_1 \leq T(q_1),\ 0 \leq t_2 \leq T(q_2),\ -Q\leq q_1,q_2 \leq Q. \]
In this case, we set in~\eqref{eqn:nuc_norm},
\[
\epsilon_{q,t} = \epsilon \abs{ m_1(A_0,B_0)_{q,t}} \quad  \text{  and  } \quad  \epsilon_{q_1,t_1,q_2,t_2} = \epsilon \abs{ G_{q_1,t_1,q_2,t_2}(A_0 A_0^*\otimes B_0)} .
\]
The stability results in recovering $A_0$ are shown in Figure~\ref{fig:sdp_stab}. We ran five simulations for every $\epsilon$ and average the relative error
\begin{equation*}
\mathtt{RE} = \frac{\| \bar A-A_0 A_0^* \|_F}{\|A_0 A_0^*\|_F}.
\end{equation*}
Results show an exact recovery in the noiseless case and slowly increasing in relative error as $\epsilon$ grows. 

 \begin{figure}[ht]
	\begin{center}
		\includegraphics[width=.5\linewidth]{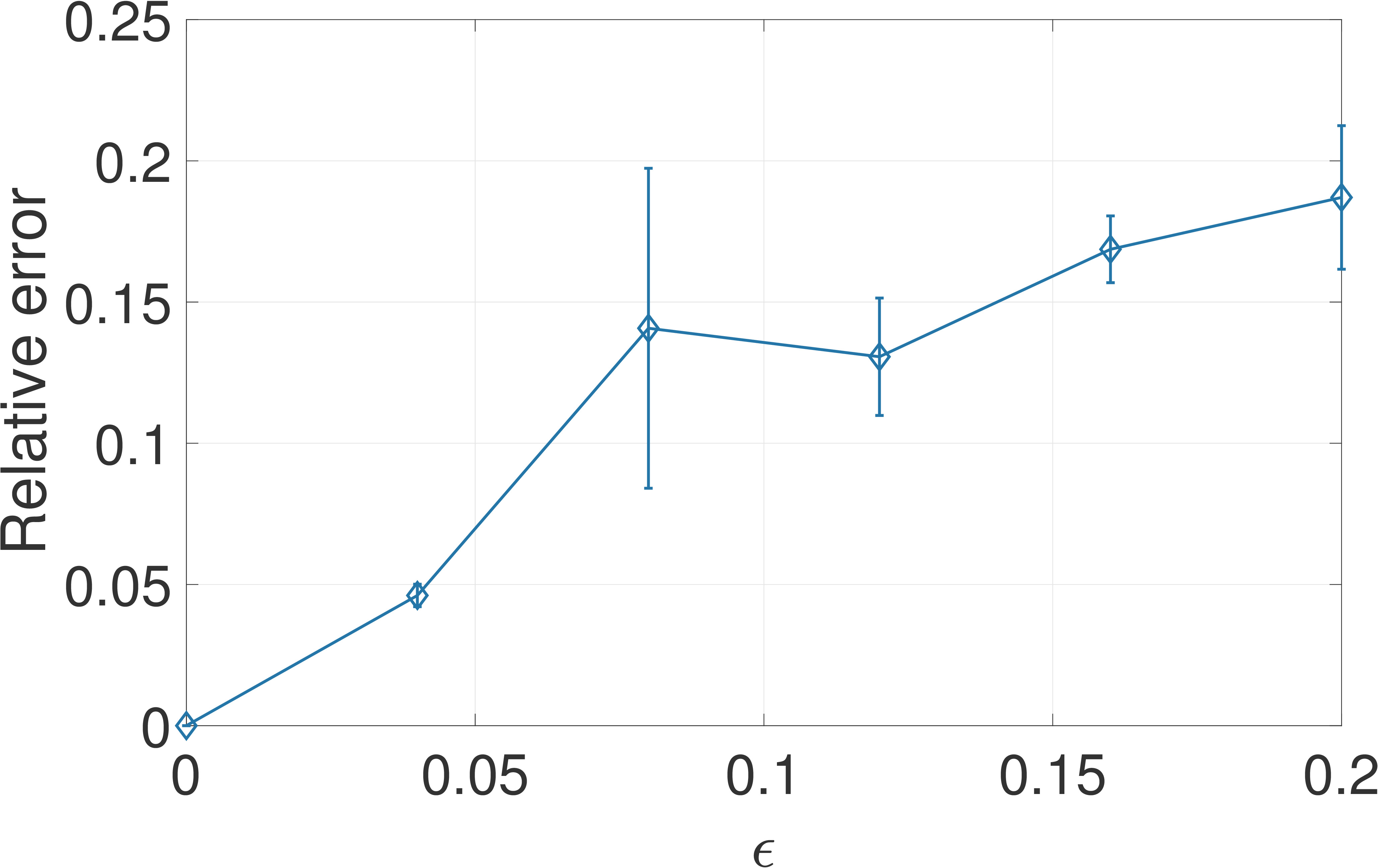} 
		\caption{Stability of the SDP in \eqref{eqn:nuc_norm} when fixing the density to be a non-uniform density. }   
		\label{fig:sdp_stab}
	\end{center}
\end{figure}

\subsection{Volume from moments -- non-uniform vs. uniform} \label{subsec:nonVSuniform}

As a first numerical example, we present a recovery comparison between the cases of uniform and non-uniform distributions of rotations. In this example, we use as a ground truth a low degree approximation of a mixture of six Gaussians, given in a non-symmetric conformation. The approximation, which we ultimately use as our reference, is attained by discretizing the initial volume to $23 \times 23 \times 23$ and truncating the PSWFs expansion to $L=4$. This expansion consists of $118$ coefficients in total. The other PSWFs parameters that we use are a band limit $c$ that corresponds to the Nyquist frequency and 3-D truncation parameter~\eqref{eq:PSWFs truncation rule} of $\delta = 0.99$. The original volume and its approximation appear in Figure~\ref{fig:vols}.

We divide the example into two scenarios of different distributions, uniform and non-uniform. In each case, we start from the analytic moments~\eqref{eqn:analytical moments}, calculated with respect to 2-D prescribed accuracy~\eqref{eqn:epsilon_eq} of $\epsilon = 10^{-3}$, and obtain an estimation based on minimizing the least squares cost function~\eqref{eqn:LS}. The optimization is carried with a gradient-based method, specifically we use an implementation of the \textit{trust-region} algorithm, see \eg,~\cite{powell1990trust}. In the first case, we use as the distribution of rotations a quadratic expansion $P=2$ which is in-plane uniform. Based on the in-plane invariance, we present this distribution as a function on the sphere in Figure~\ref{fig:dist1}. For the second case, we use a uniform distribution of rotations. 

In both cases, we let the optimization reach numerical convergence, where the progress in minimization is minor. In this example, it is usually at about $100-150$ iterations. In the case of non-uniform distribution, we observe that choosing a random initial guess can have an effect on the speed of convergence but has almost no influence on the resulted volume. In other words, we gain numerical evidence for uniqueness. The estimated volume, in this case, is depicted on the left side of Figure~\ref{fig:recovery1}. 

On the other hand, in the case of a uniform distribution, while convergence was typically quicker than in the non-uniform case, the results vary between different initial guesses, indicating the richness of the space of possible solutions. One such solution appears on the right side of Figure~\ref{fig:recovery1}. This behavior of the optimization solver agrees with our previous knowledge on the ill-posedness of Kam's method and also with the Jacobian test which shows degree deficiency of the polynomial system defined by the first and second moment under the uniform distribution.

\begin{figure}[ht]
	\centering
	\begin{subfigure}[t]{0.45\textwidth}
		\includegraphics[width=.8\linewidth]{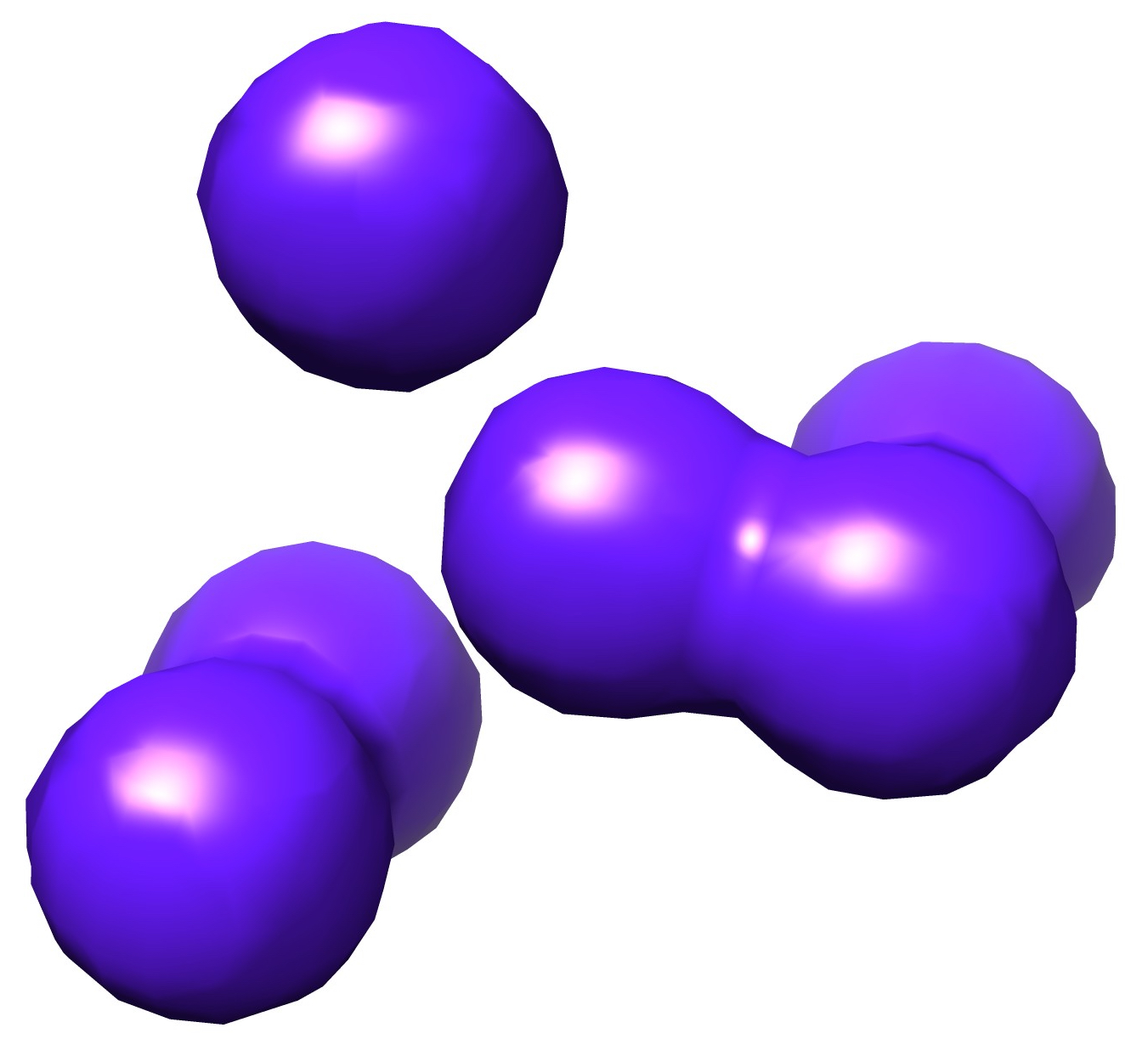}
		\caption{Mixture of Gaussians}   
	\end{subfigure}   \quad
	\begin{subfigure}[t]{0.45\textwidth}
		\includegraphics[width=.8\linewidth]{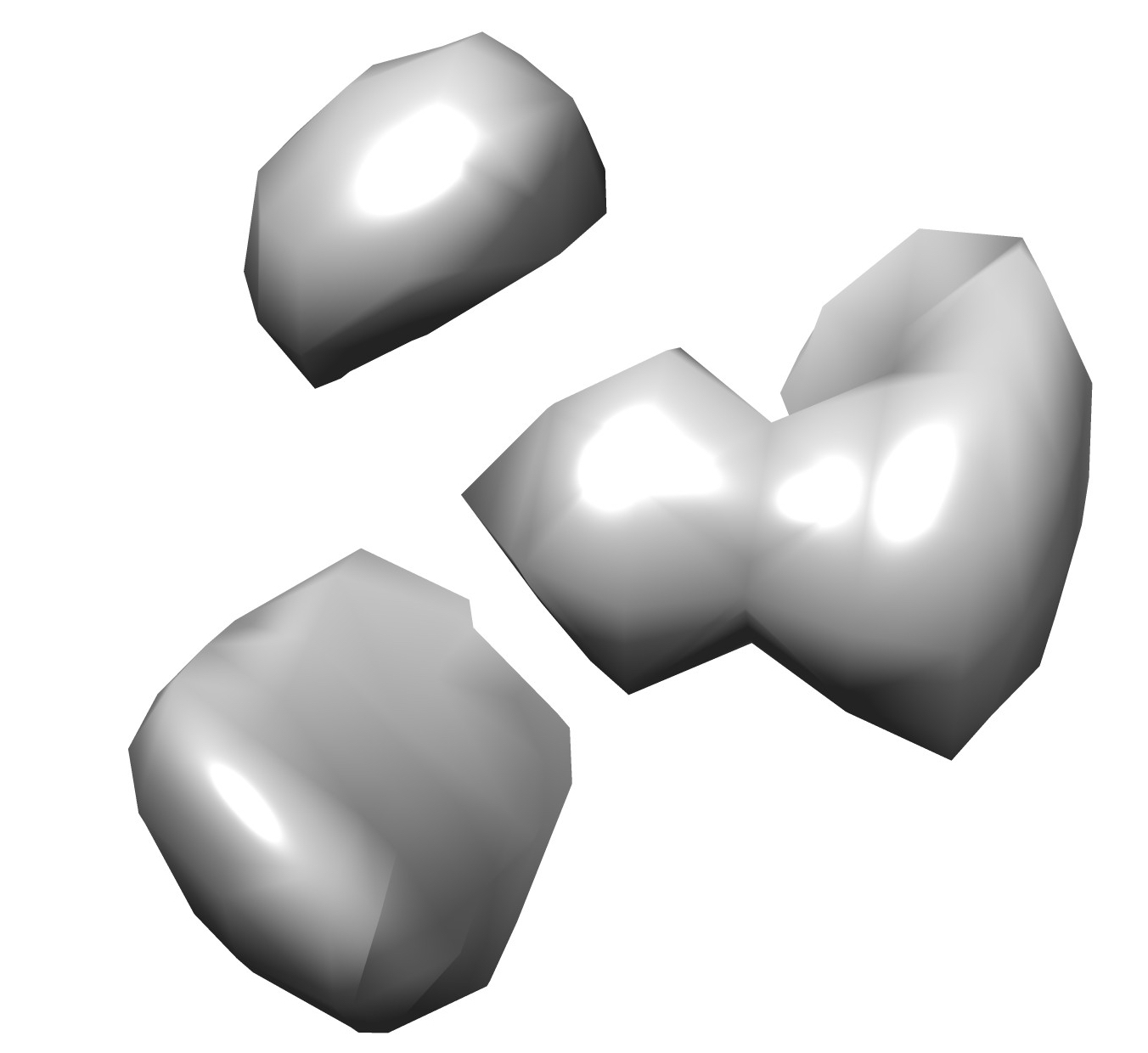}
		\caption{A low degree approximation using PSWF expansion with $L=4$}  \label{fig:vols_trunc} 
	\end{subfigure} 
\caption{Ground truth volumes}    
	\label{fig:vols}
\end{figure}

\begin{figure}[ht]
	\centering
			\includegraphics[width=.42\linewidth]{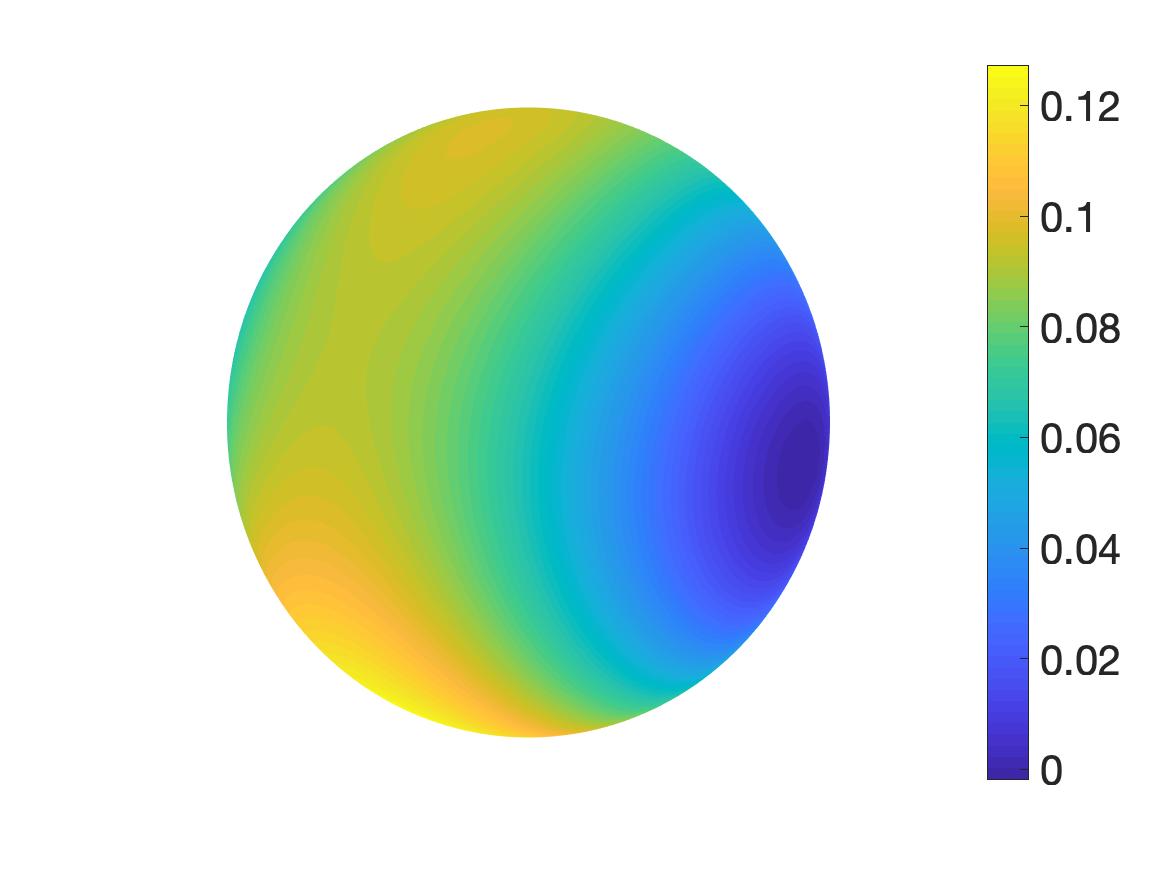}
			\caption{The non-uniform distribution of viewing angles which we use for Section~\ref{subsec:nonVSuniform}. This distribution satisfies in-plane invariance and depicted as a function on the sphere}  
				\label{fig:dist1} 
\end{figure}

\begin{figure}[ht]
	\centering
	\begin{subfigure}[t]{0.48\textwidth}
		\includegraphics[width=\textwidth]{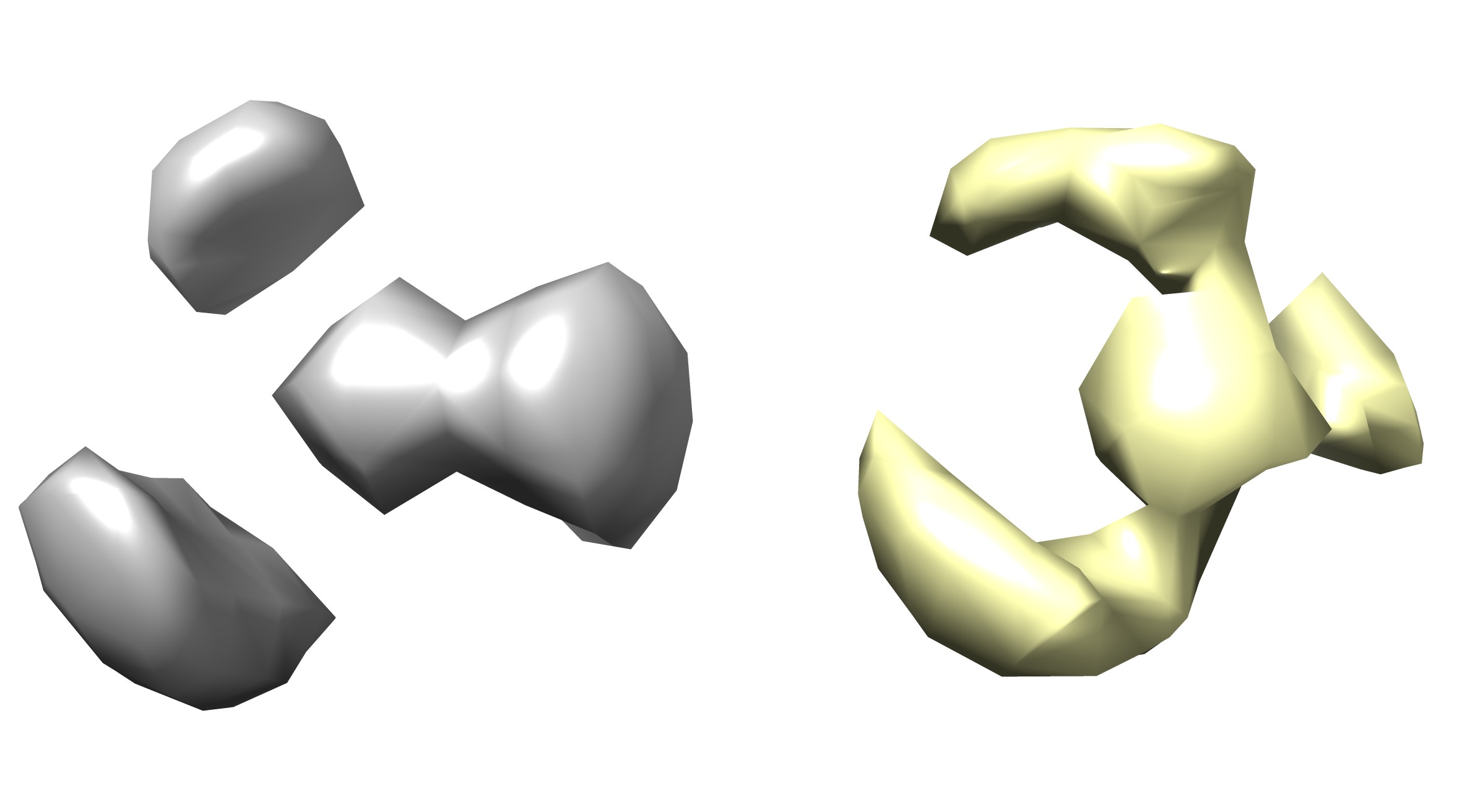}
		\caption{Recovery under non-uniform distribution} 
	\end{subfigure}      \quad
	\begin{subfigure}[t]{0.48\textwidth}
		\includegraphics[width=\textwidth]{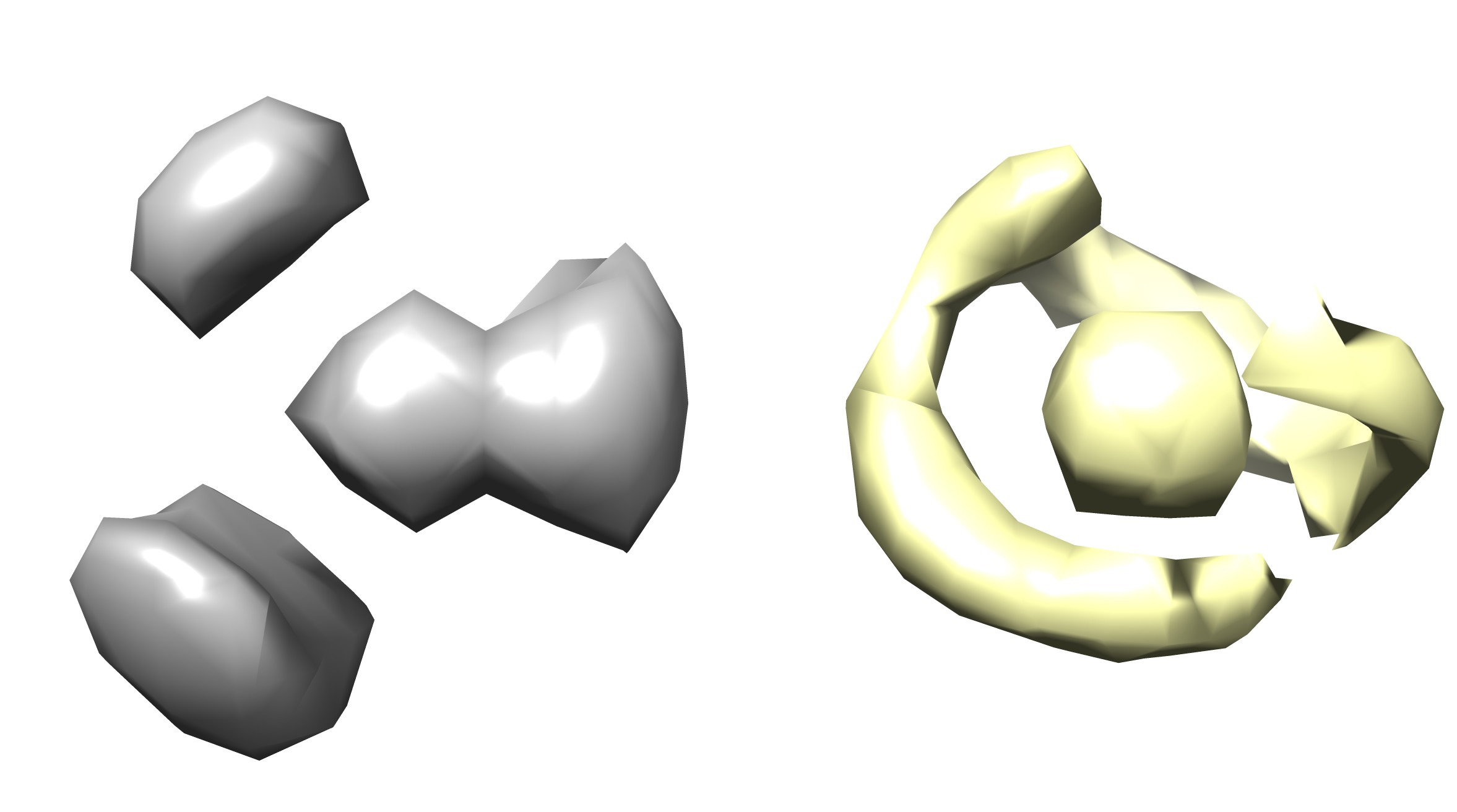}
		\caption{Recovery under uniform distribution}   
	\end{subfigure}
	\caption{Comparison of reconstructions for two cases of non-uniform and uniform distribution of rotations: ground truth volume, as also seen in Figure~\ref{fig:vols_trunc}, appears on the left of each pair (in gray), where the estimation is on the right (in yellow)} 
	\label{fig:recovery1}
\end{figure}

\subsection{Comparing volumes using FSC}

A commonly used cryo-EM resolution measure is the Fourier shell correlation (FSC)~\cite{harauz1986exact}. The FSC measures cross-correlation coefficient
between two 3-D volumes over each corresponding shell. That is, given two volumes $\phi_1$ and $\phi_2$, the FSC in a shell $\kappa$ is calculated using all voxels $\boldsymbol{\kappa}$ on this $\kappa$-th shell:
\begin{equation} \label{eqn:fsc}
\operatorname{FSC}(\kappa) = \frac{\sum_{\norm{\boldsymbol{\kappa}}=\kappa} \phi_1(\boldsymbol{\kappa})    \overline{\phi_2(\boldsymbol{\kappa})}  }{\sqrt{\sum_{\norm{\boldsymbol{\kappa}}=\kappa}\abs{ \phi_1(\boldsymbol{\kappa})}^2 \sum_{\norm{\boldsymbol{\kappa}}=\kappa}\abs{ \phi_2(\boldsymbol{\kappa})}^2 }}
\end{equation}
Customary, the resolution is determined by a cutoff value. The threshold question is discussed in~\cite{van2005fourier}, where in our case since we wish to compare a reconstructed volume against its ground truth, we use the $0.5$ threshold. Since we focus on \textit{ab initio} modeling, we aim to estimate a low-resolution version of the molecule from the first two moments. Thus, we expect the cutoff to reach a value which ensures a good starting point for a refinement procedure.

\subsection{Visual example and the effect of non-uniformity}

We next introduce an example for the most realistic scenario of an unknown, in-plane uniform distribution, by inverting the moment map of a real-world structure through minimization of a least-squares cost function~\eqref{eqn:LS}. In this example, we once again illustrate the feasibility of numerically approaching the solution, without any prior assumption on the volume.

The example is constructed as follows. As the ground truth volume, we once again use EMD-0409, the catalytic subunit of protein kinase A bound to ATP and IP20~\cite{herzik2019high}, as presented at the online cryo-EM data-bank~\cite{lawson2015emdatabank}. The map original dimension is $128\times 128 \times 128$ voxels. Since we aim to recover a low-resolution model, we reduce complexity and downsample it by a factor of three to $43\time 43 \time 43$. We firstly expand this volume using PSWFs with a band limit $c$ chosen as the Nyquist frequency and 3-D truncation parameter~\eqref{eq:PSWFs truncation rule} of $\delta = 0.99$. The full expansion consists of degree $L=40$, and after truncation to maximize conditioning, as done in Section~\ref{sec:uni3}, we aim to recover the low degree counterpart up to degree $L=6$. The moments were calculated with respect to 2-D prescribed accuracy~\eqref{eqn:epsilon_eq} of $\epsilon = 10^{-3}$ and in the absence of noise. The volume contributes $657$ unknowns to be optimized.

As the ground truth distribution, we choose three different functions: uniform, highly non-uniform and a non-uniform case in-between. The two non-uniform cases are cubic spherical harmonics expansions ($P=3$) and satisfy in-plane invariance and so we present them in Figure~\ref{fig:nonuniform_dists} as functions on the sphere, together with a histogram to compare and illustrate their ``non-uniformness''. The non-uniform distributions add extra $15$ unknowns which means that, in total, we optimize $672$ unknowns in the cases of non-uniform distribution and only $657$ unknowns in the case of uniform distribution.

\begin{figure}[ht]
	\centering
	\begin{subfigure}[t]{0.31\textwidth}
		\includegraphics[width=.99\linewidth]{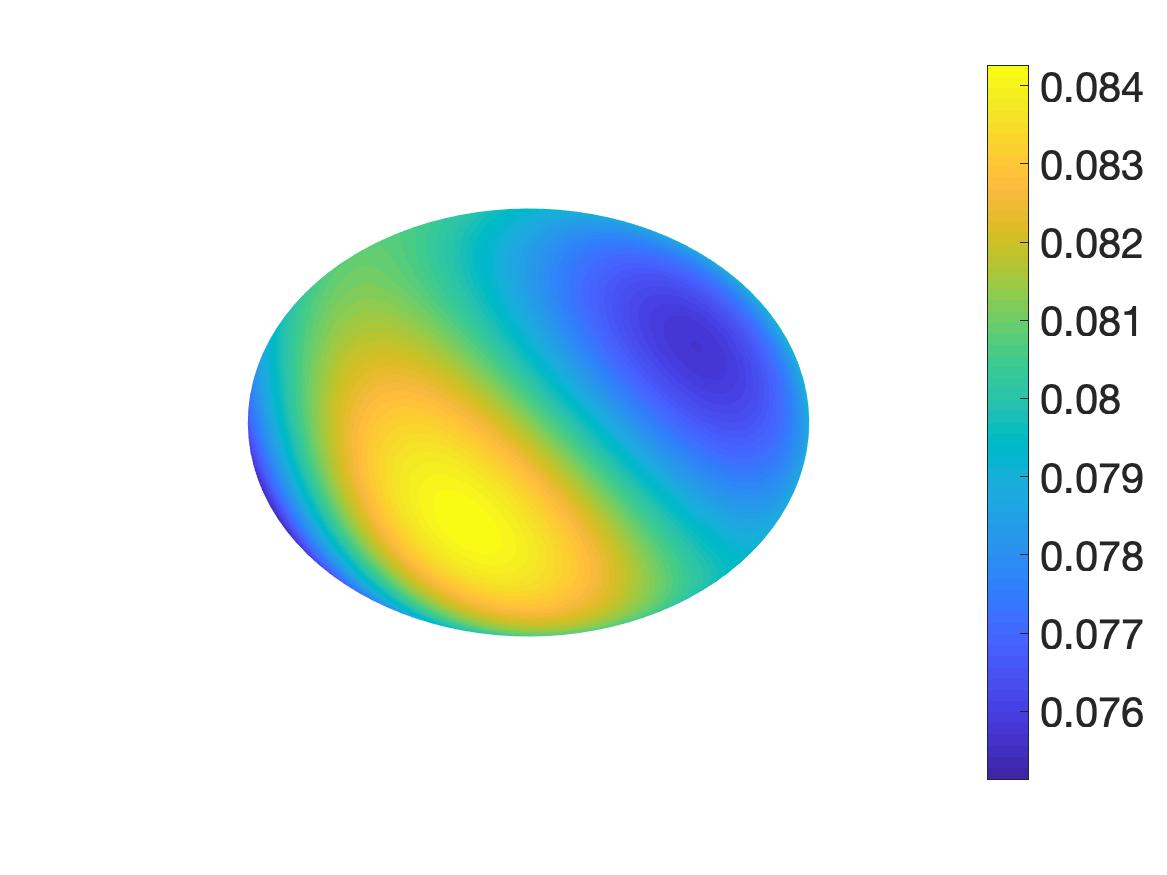}
		\caption{The less non-uniform distribution function on the sphere}   
	\end{subfigure}   \quad
	\begin{subfigure}[t]{0.31\textwidth}
		\includegraphics[width=.98\linewidth]{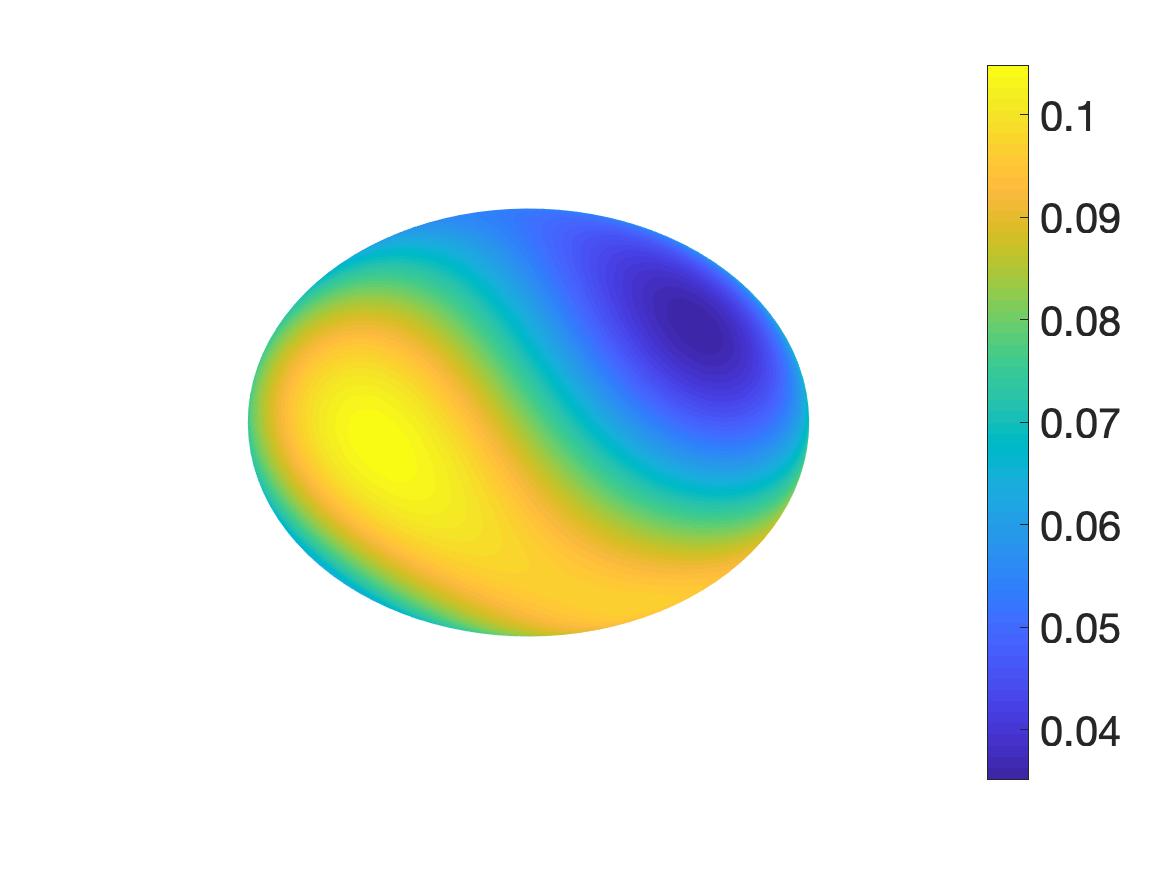}
		\caption{The more non-uniform distribution function on the sphere} 
	\end{subfigure}       \quad
	\begin{subfigure}[t]{0.3\textwidth}
		\includegraphics[width=.99\linewidth]{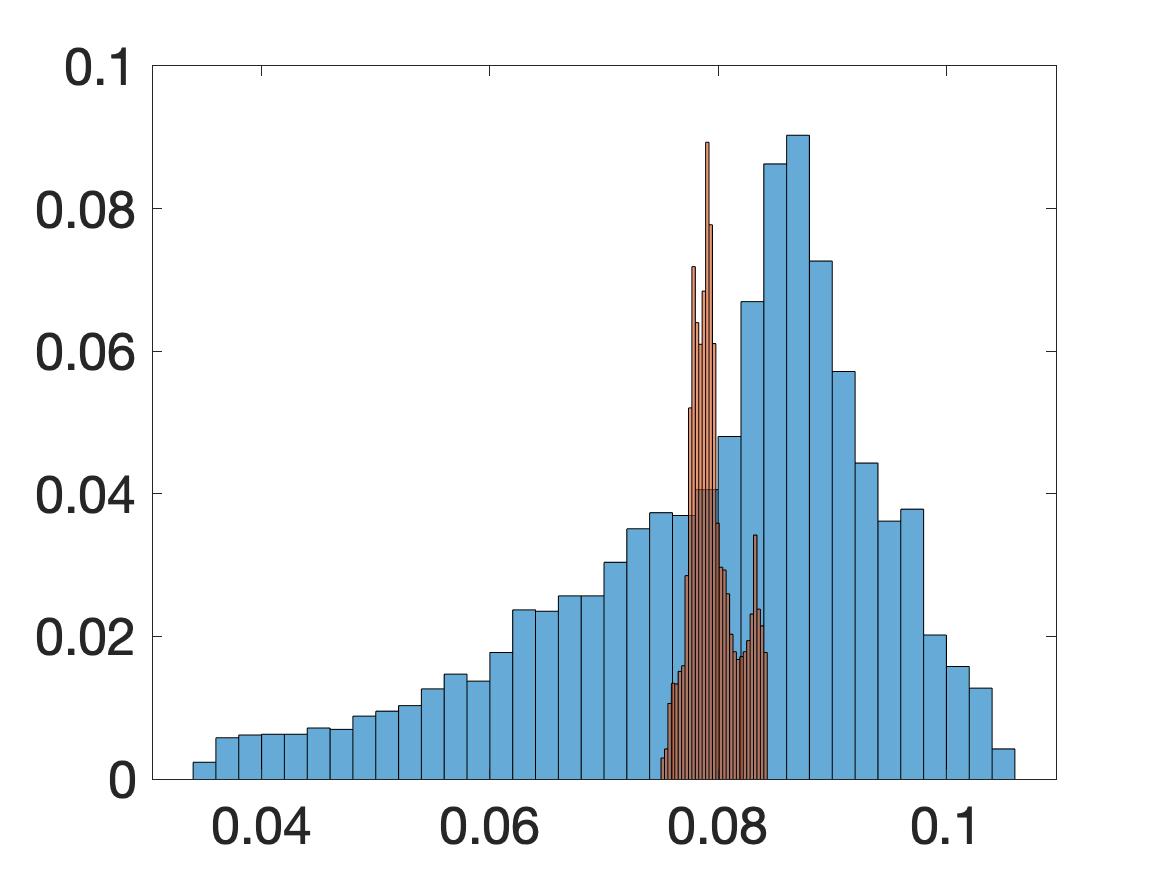}
		\caption{The probability of each value to appear in the distribution: a comparison to illustrate the different non-uniformity levels of the two distributions}   
	\end{subfigure}  
	\caption{The two non-uniform distributions in use} 
	\label{fig:nonuniform_dists}
\end{figure}

In the optimization process, we use the limit of the empirical moments~\eqref{eqn:empirical_mom} ($n \to \infty$) as our input moments. As before, we use a \textit{trust-region} algorithm, see \eg,~\cite{powell1990trust}, which is a gradient-based method. To fix the initialization between the different cases, we start the search with the zero volume. In cases of non-uniform distribution, we provide a random non-uniform distribution to start with. Our method is implemented in MATLAB R2017b, and we calculated the example on a laptop with a 2.9 GHz Intel Core i5 processor and 16 GB 2133 MHz memory. 

The result we present next is obtained after $60$ iterations of trust-region, each iteration usually uses up to $30$ inner iterations to estimate the most accurate step size. The runtime of this example is about $55$ minutes for each model, where at this point, our naive implementation does not support any parallelization which potentially can lead to a significant improvement in the total runtime. For example, the evaluation of the second moment and its associated gradient part are related to the leading complexity term as described in Section~\ref{subsec:complexity}. Their implementation is based upon matrix product as seen in the form~\eqref{eqn:second_mom_compacy}. This part can remarkably benefit from parallel execution. Note that evaluating the PSWF functions, as well as the product Clebsch-Gordan coefficients (which appears in the moments), are all calculated offline as a preprocessing step.

We present a comparison between the different FSC curves for the three cases. As implied by Figure~\ref{fig:fsc}, the resolution increases (lower FSC cut) as the non-uniformity becomes more significant. Specifically, with the uniform distribution we obtain merely $39.1 \textup{\AA} $, where for the two other non-uniform cases we get $22.5 \textup{\AA} $ and $19.0 \textup{\AA} $ as the non-uniformity increases in the examples of Figure~\ref{fig:fsc}.

\begin{figure}[ht]
	\begin{center}
		\includegraphics[width=.8\textwidth]{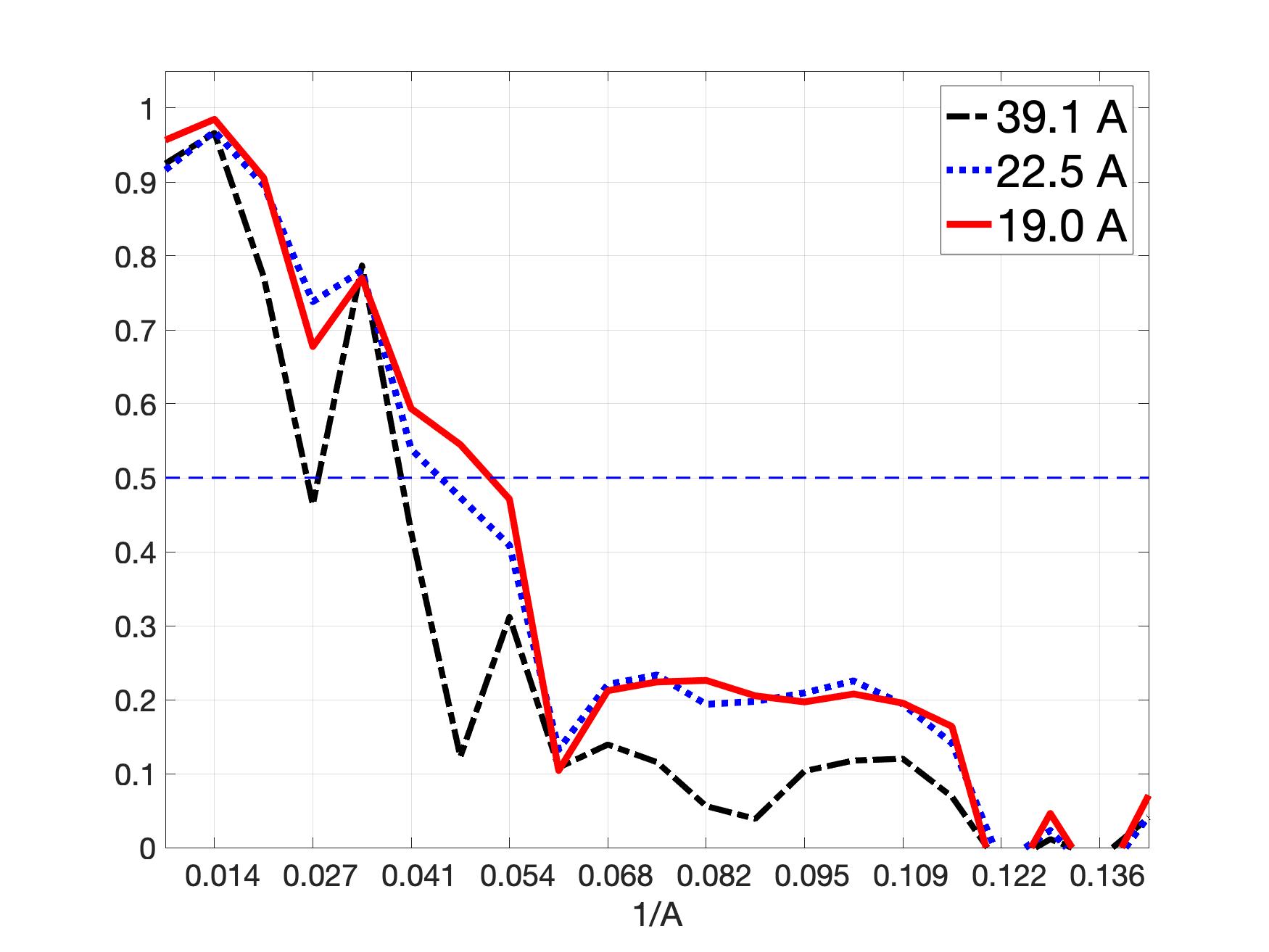}
	\end{center}
		\caption{The FSC curves of the three test cases. The dashed curve (in black) is of the uniform distribution, the dot line (blue) is of the less radical non-uniform case, and the solid curve (red) is of the most non-uniform distribution case. As customary, we use the conventional FSC cutoff value of $0.5$.} 
		\label{fig:fsc}
\end{figure}

A visual demonstration of the output of the optimization is presented in Figure~\ref{fig:recon}, where we plot side by side the ground truth and three models, from the uniform to the most non-uniform one.

\begin{figure}[ht]
	\centering
\begin{center}
		\includegraphics[width=.48\linewidth]{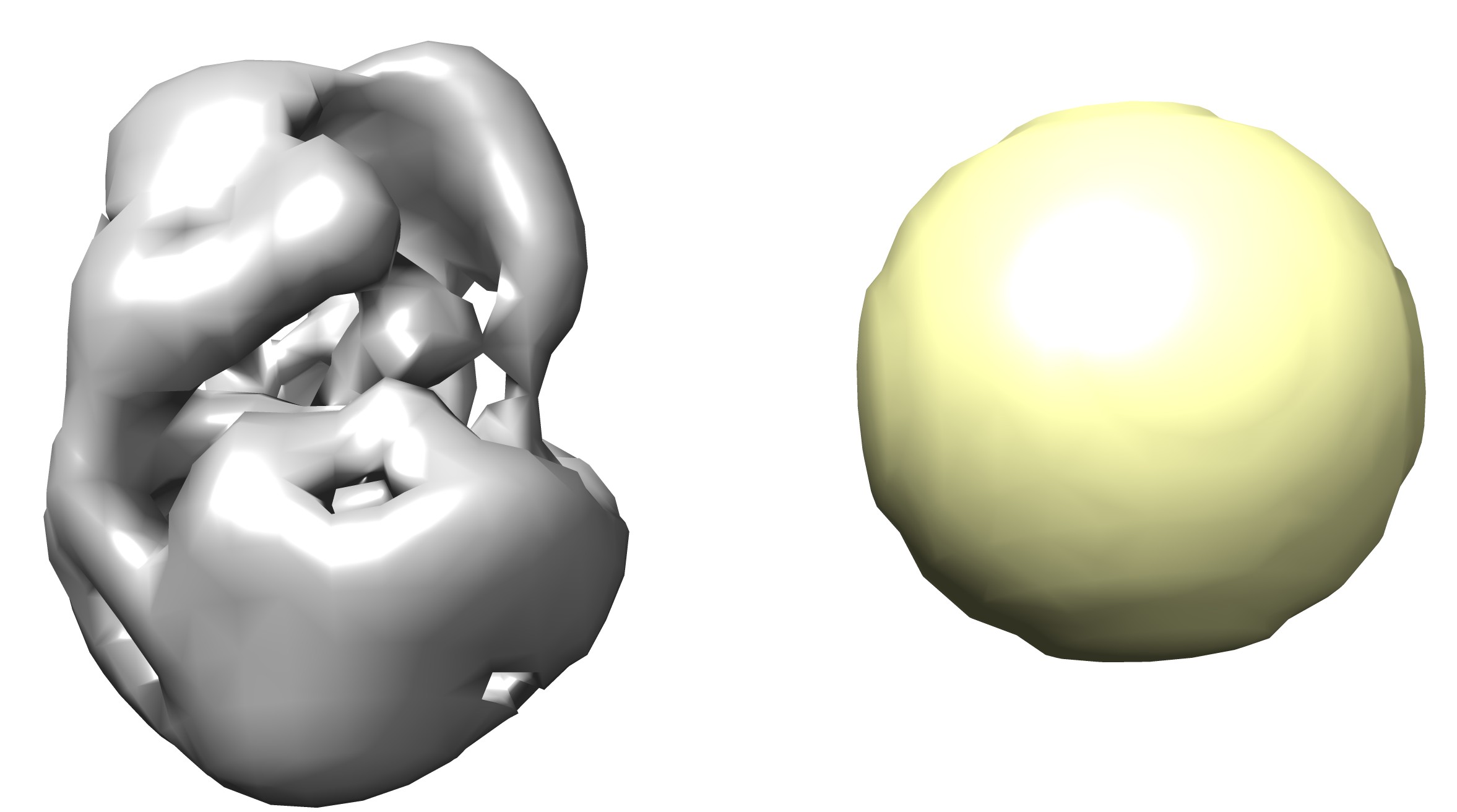} \,
		\includegraphics[width=.48\linewidth]{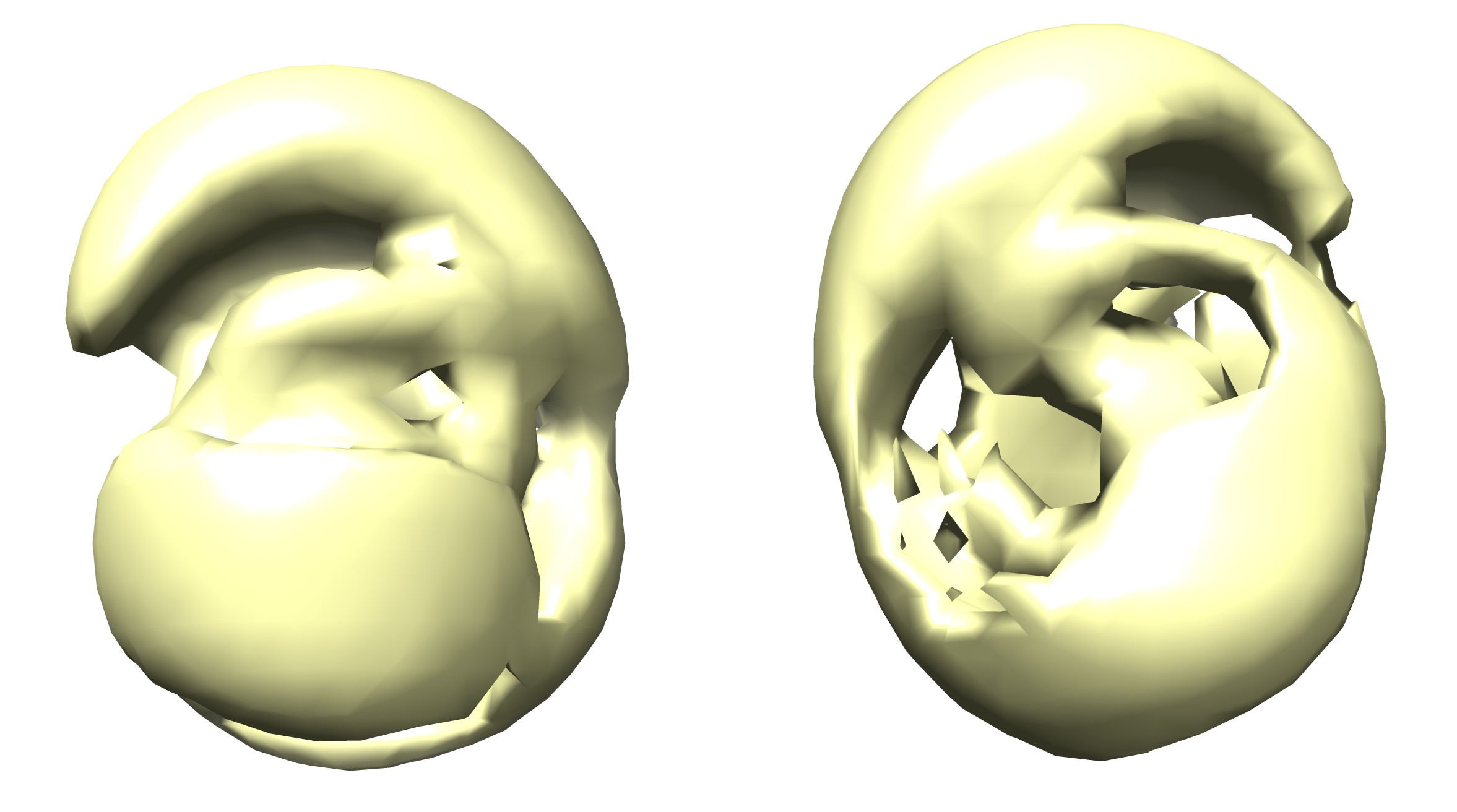}
\end{center}
	\caption{The estimations which were obtained by inverting the moments via optimization. The ground truth volume appears on the left (in gray), where the models are on the right (in yellow), ordered as associated with the different distributions, from uniform on the left to the most non-uniform on the right.} 
	\label{fig:recon}
\end{figure}

\subsection{Recovery from noisy images}

We conclude this section with an example of recovering a volume from its noisy projection images. The volume is a mixture of six Gaussians, synthetically designed to have no spatial symmetry. The volume's size is $15 \times 15 \times 15$ and its full PSWF expansion is of length $L=13$, with band limit $c$ chosen as the Nyquist frequency and 3-D truncation parameter~\eqref{eq:PSWFs truncation rule} of $\delta = 0.99$. 
We use an in-plane uniform distribution of rotations, very localized on a $45$ degree cone, represented with an expansion length of $P=3$. The distribution function is shown on Figure~\ref{fig:dist_noisy_im} and can model a realistic scenario of highly anisotropic viewing directions (see, e.g.,~\cite{baldwin2019non}). Using the distribution, we generated projection images to obtain $200,000$ observations. These images were then contaminated with noise. The SNR of an image $I_j$ with the noise term $\varepsilon_j$ is $\SNR_j = \norm{I_j-\varepsilon_j}^2/\norm{\varepsilon_j}^2$, using the Frobenius  norm. The noise was chosen to achieve an average SNR value of $1/3$. Three examples of clean images and their noisy versions are depicted in Figure~\ref{fig:noisy_im}. As seen in Figure~\ref{fig:noisy_im}, the projections are hardly noticed in the noisy images for the naked eye.

\begin{figure}[ht]
	\centering
	\includegraphics[width=0.35\textwidth]{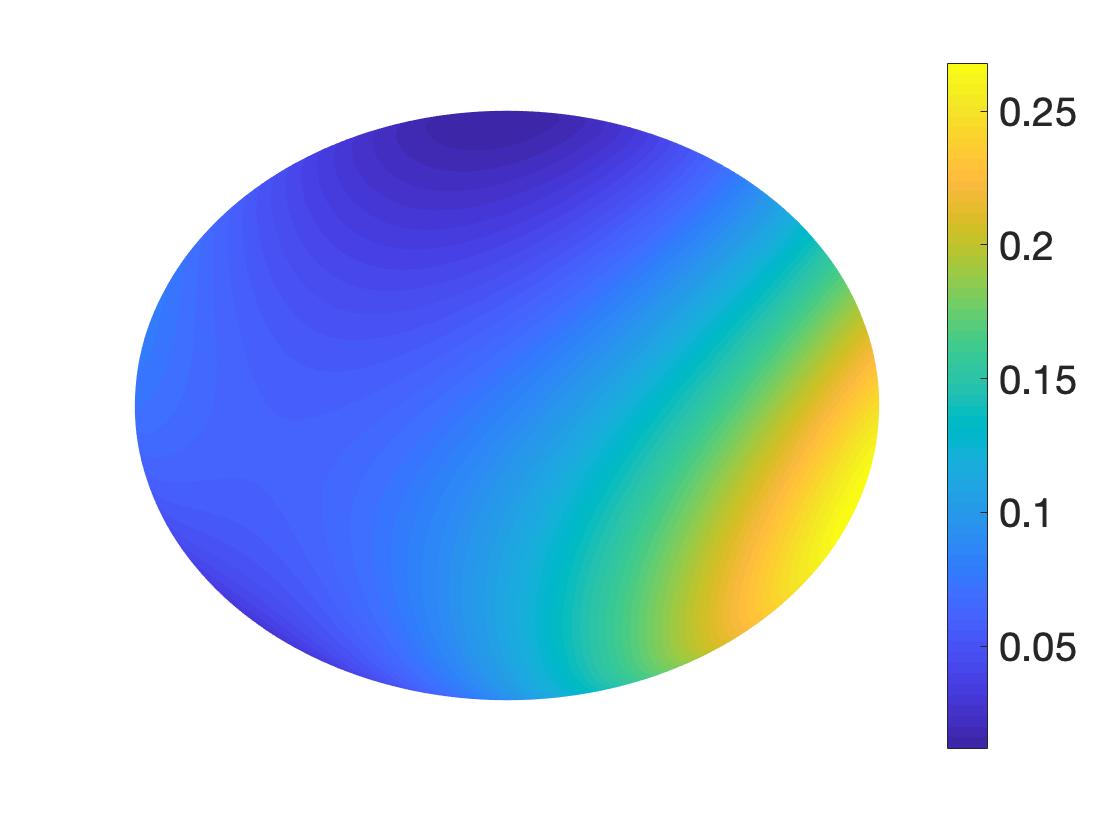}
	\caption{The distribution function on the sphere.}
	\label{fig:dist_noisy_im}
\end{figure}

\begin{figure}[ht]
	\begin{minipage}{.95\textwidth}
	\centering
	\includegraphics[width=.3\textwidth]{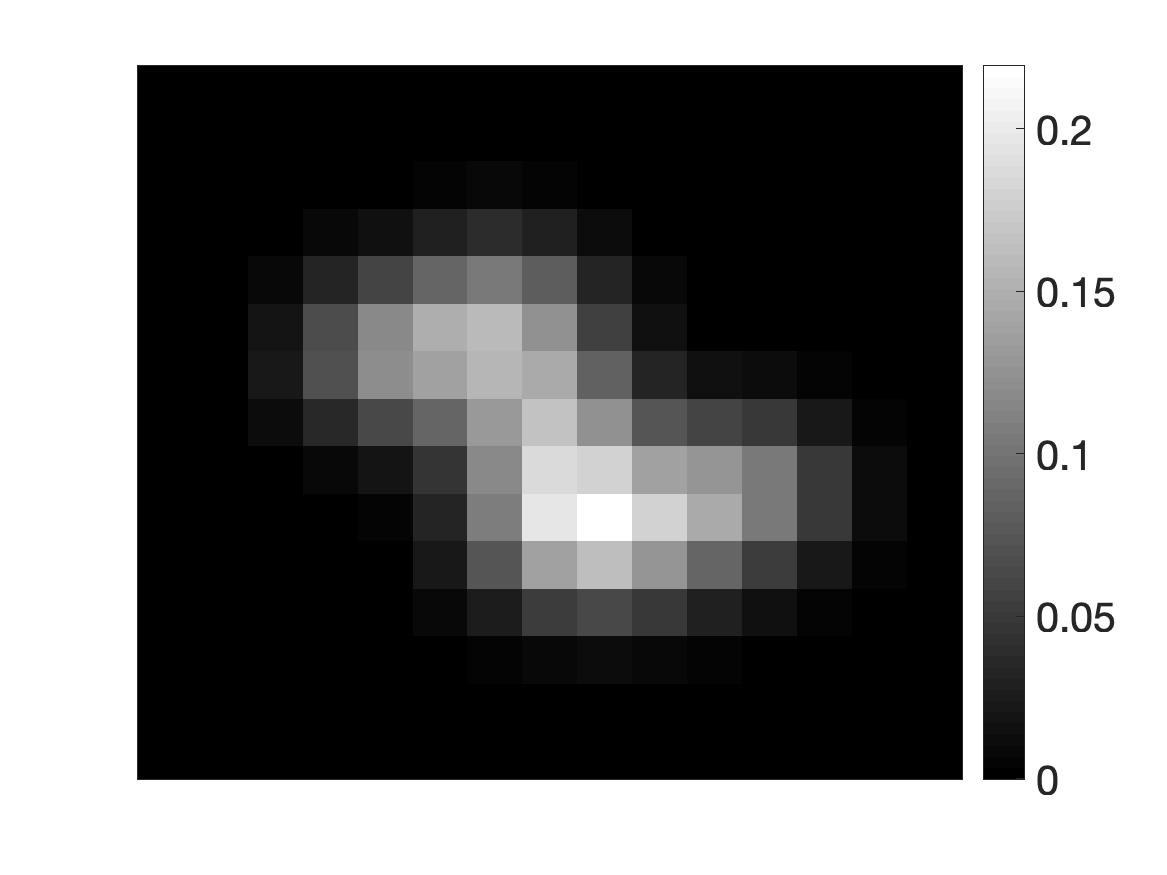}  \hspace{-9pt}
	\includegraphics[width=.3\textwidth]{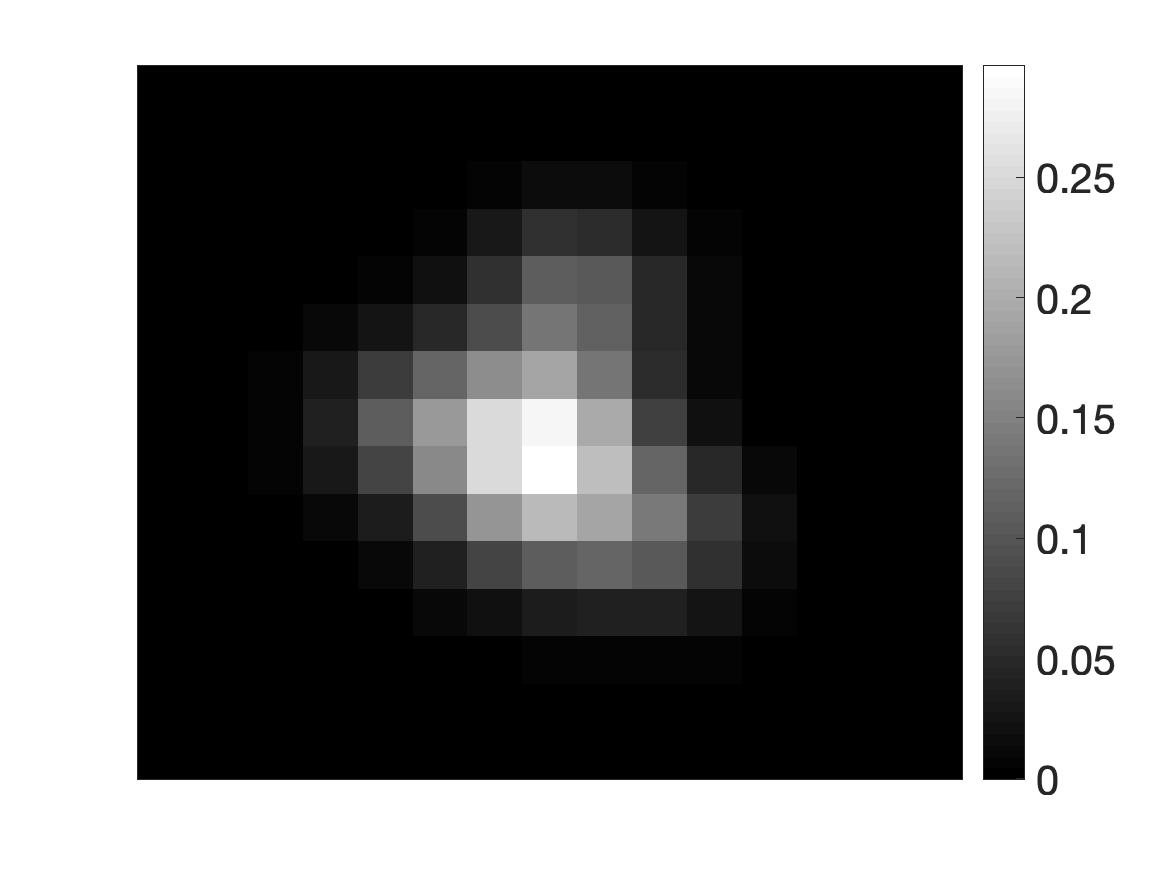}  \hspace{-9pt}
	\includegraphics[width=.3\textwidth]{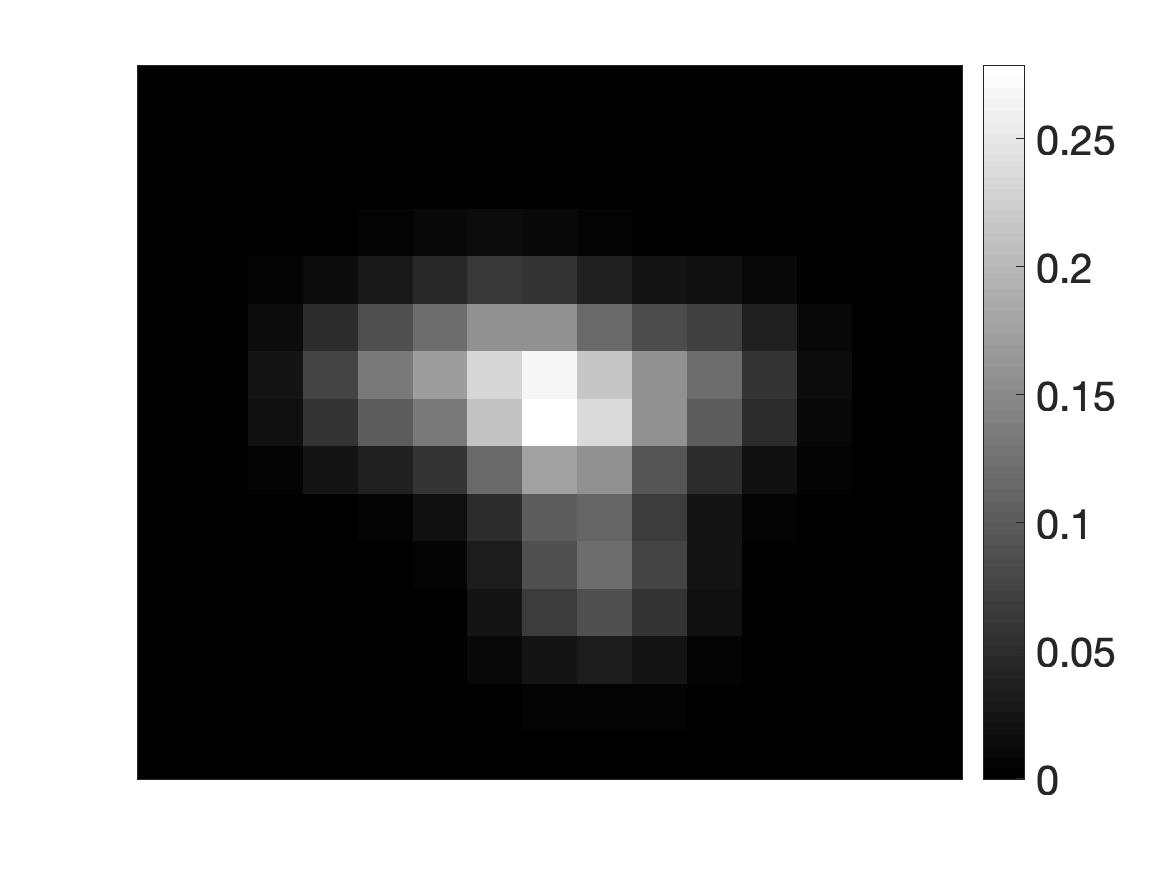} \\
	\includegraphics[width=.3\textwidth]{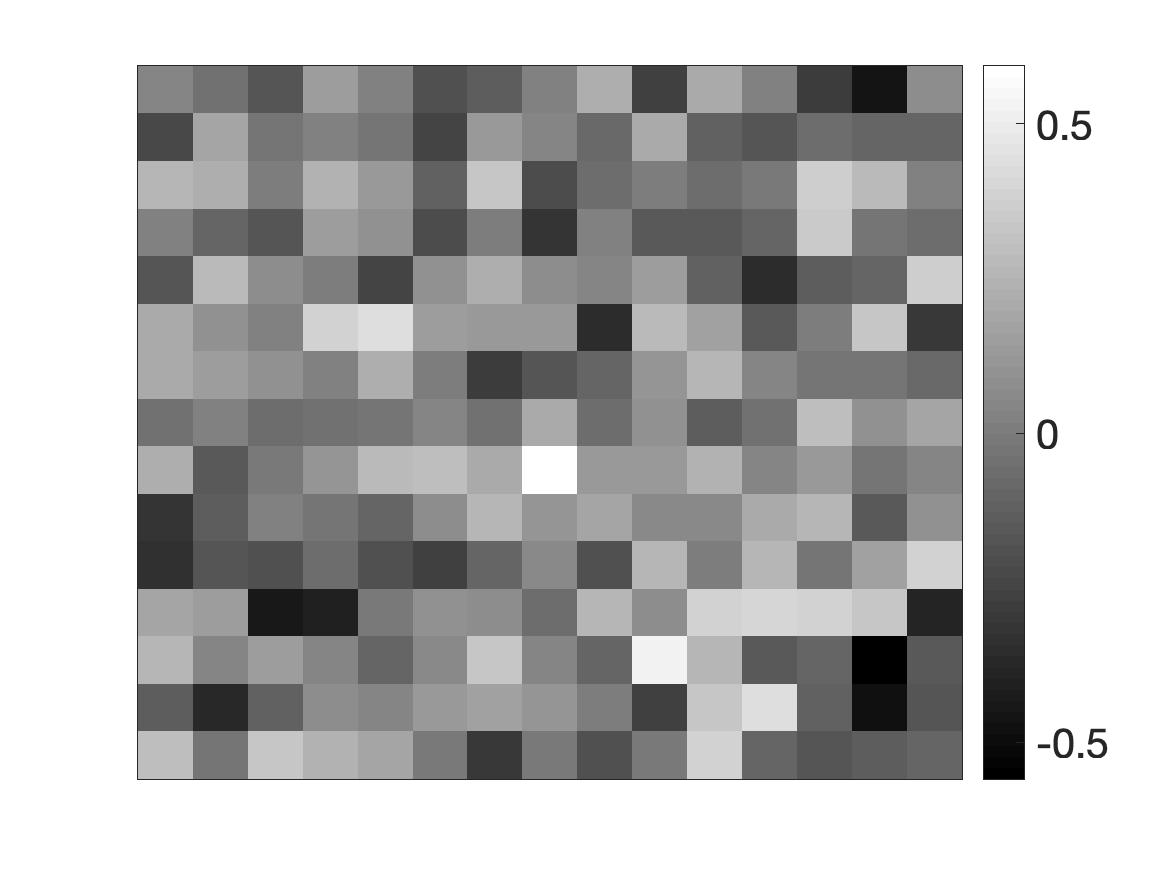}   \hspace{-9pt}
	\includegraphics[width=.3\textwidth]{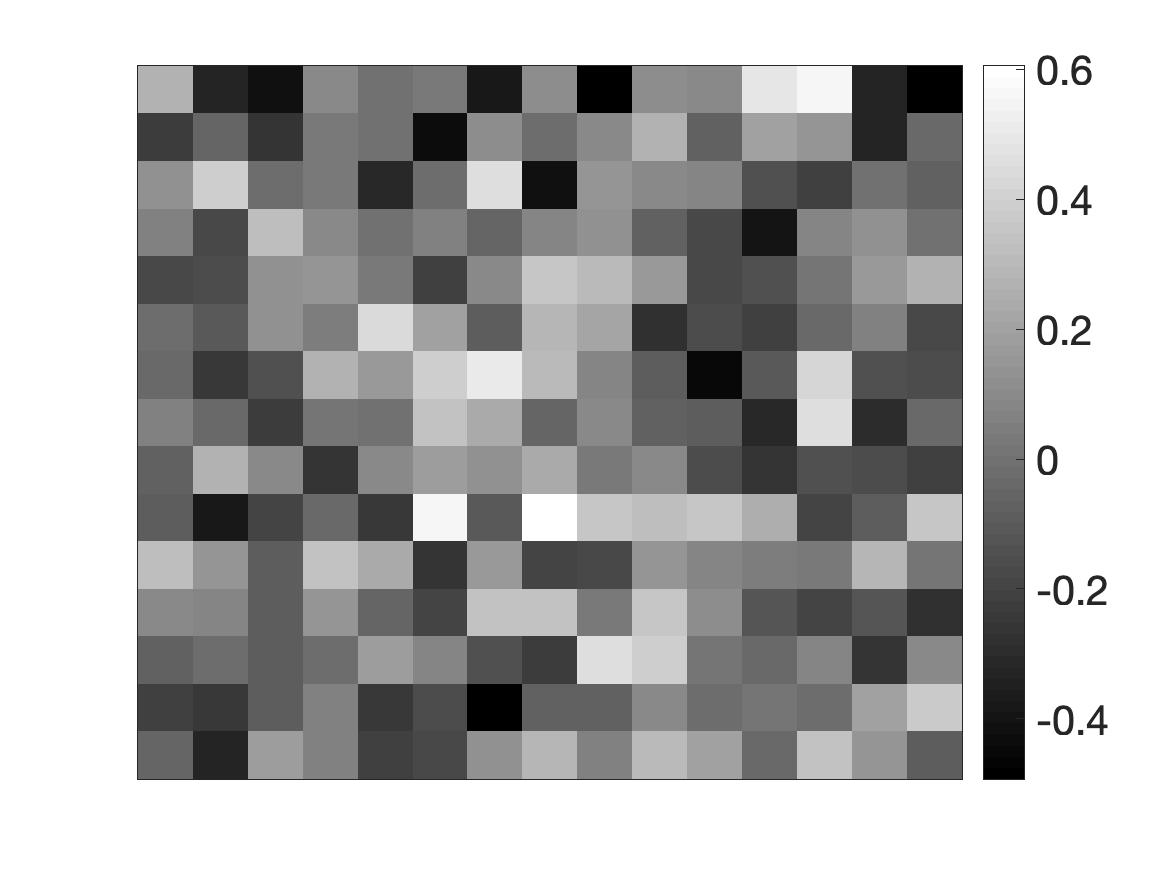}   \hspace{-9pt}
	\includegraphics[width=.3\textwidth]{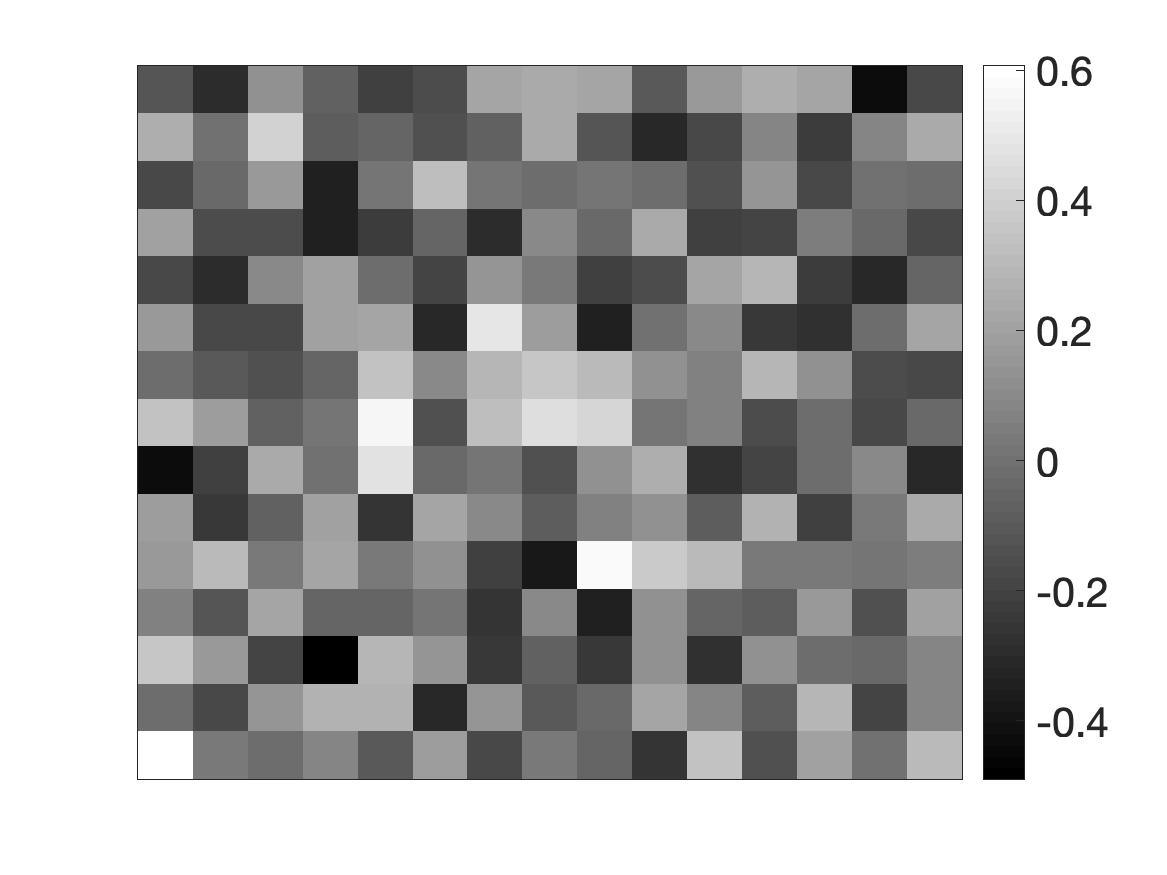} 
	\caption{Three projection images: in upper row  as clean and noisy images. The resulted SNR is about $1/3$.}
	\label{fig:noisy_im}
\end{minipage} 
\end{figure}

We expand the noisy images using a 2-D PSWF basis, as appears in~\eqref{eqn:image_expand}. Then, the coefficients and their double-products are averaged to estimate the first and second moments as in~\eqref{eqn:empirical_mom}. The reconstruction uses the empirical moments to estimate the volume and distribution. For the volume, our gradient-based least-squares algorithm targets its full expansion, which consists of $192$ unknowns. The unknown distribution includes $8$ unknowns spherical harmonics coefficients. We reached the result we present next very quickly, starting from a random initial guess. It took about $15$ iterations of trust-region; each iteration could use up to $30$ inner iterations to estimate the most accurate step size. The runtime of this example is less than $10$ minutes.

A visual demonstration of the estimated volume is provided in Figure~\ref{fig:recon_from_ims}. We present the estimation, side by side, to the original volume. As seen in the various pictures, the reconstruction, while not perfect, captures most features and the general shape of the structure. This encouraging result indicates that inverting the moments is possible also from noisy moments and that the mapping has some robustness to small perturbations.

\begin{figure}[ht]
	\centering
	\begin{center}
		\includegraphics[width=.4\linewidth]{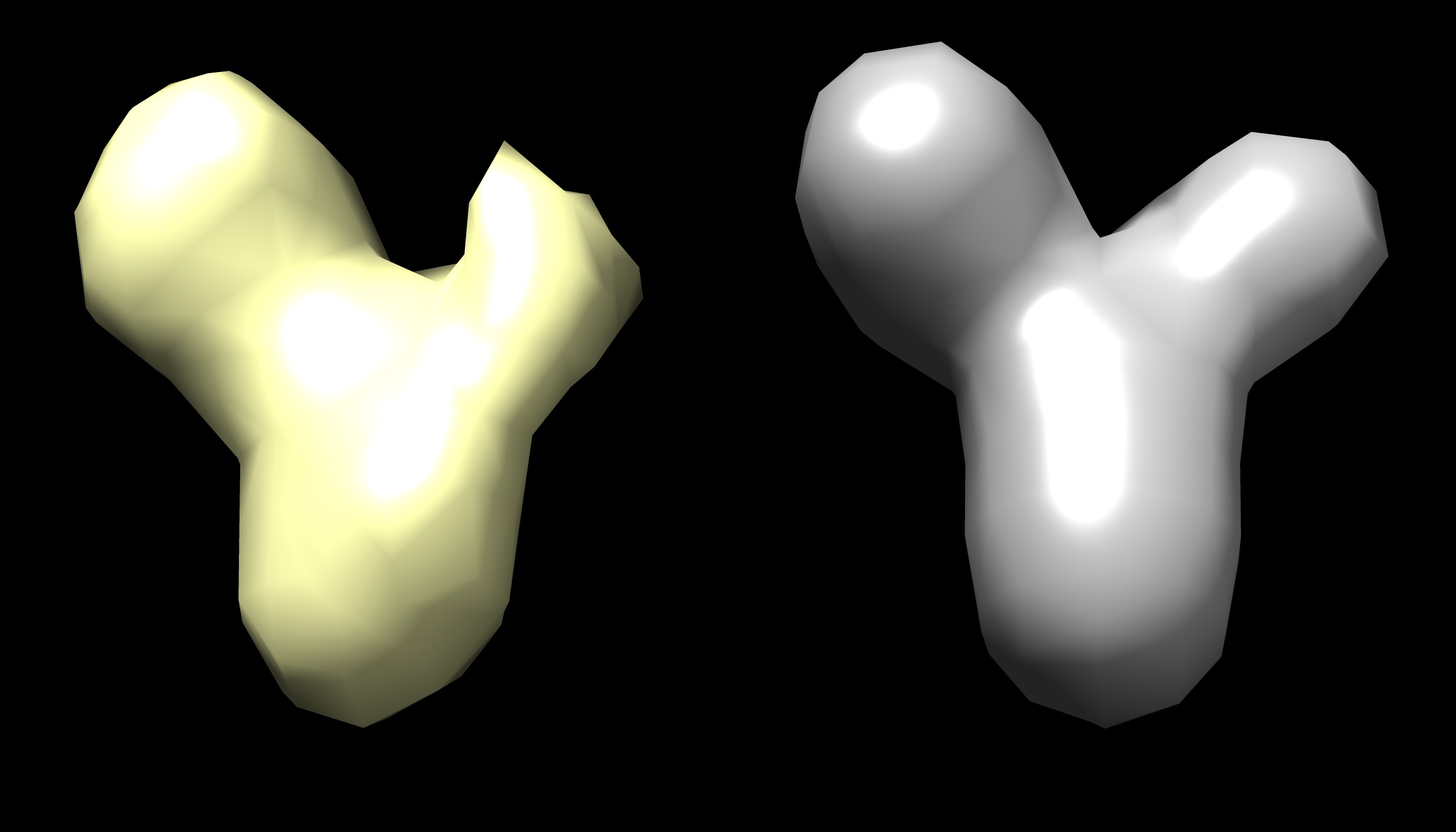}  \quad
		\includegraphics[width=.4\linewidth]{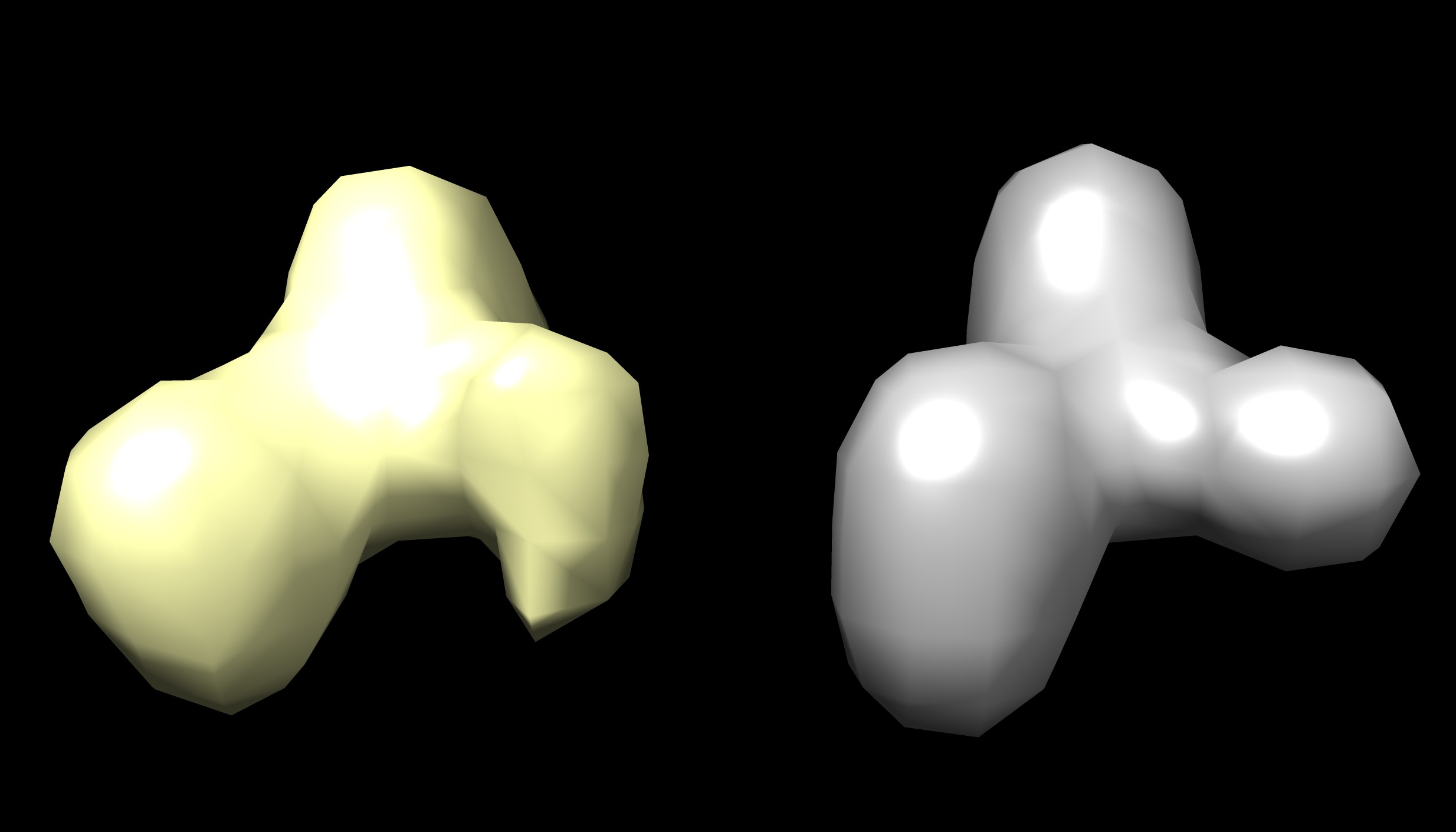}  \\ \vspace{5pt}
		\includegraphics[width=.4\linewidth]{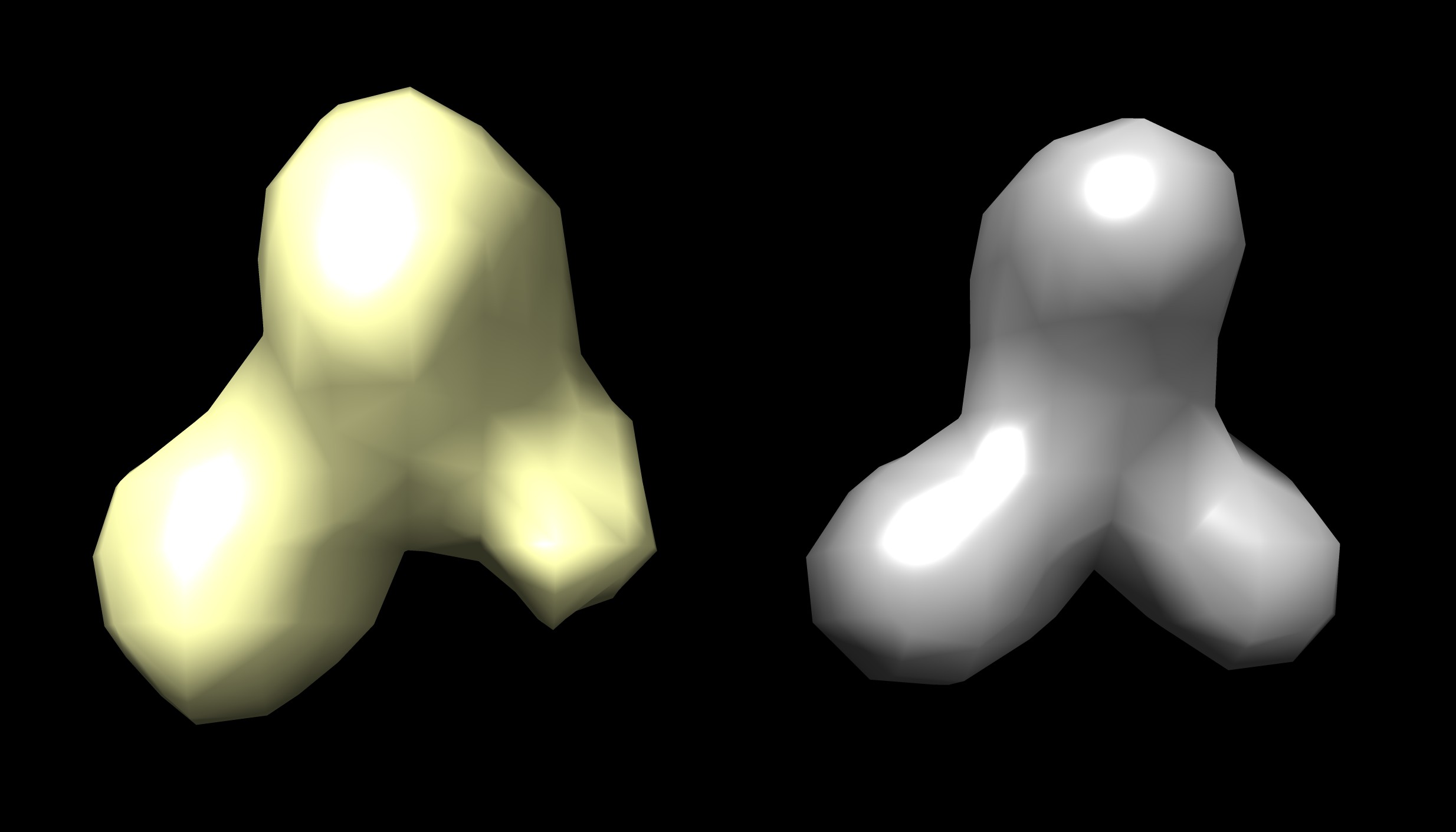} \quad
		\includegraphics[width=.4\linewidth]{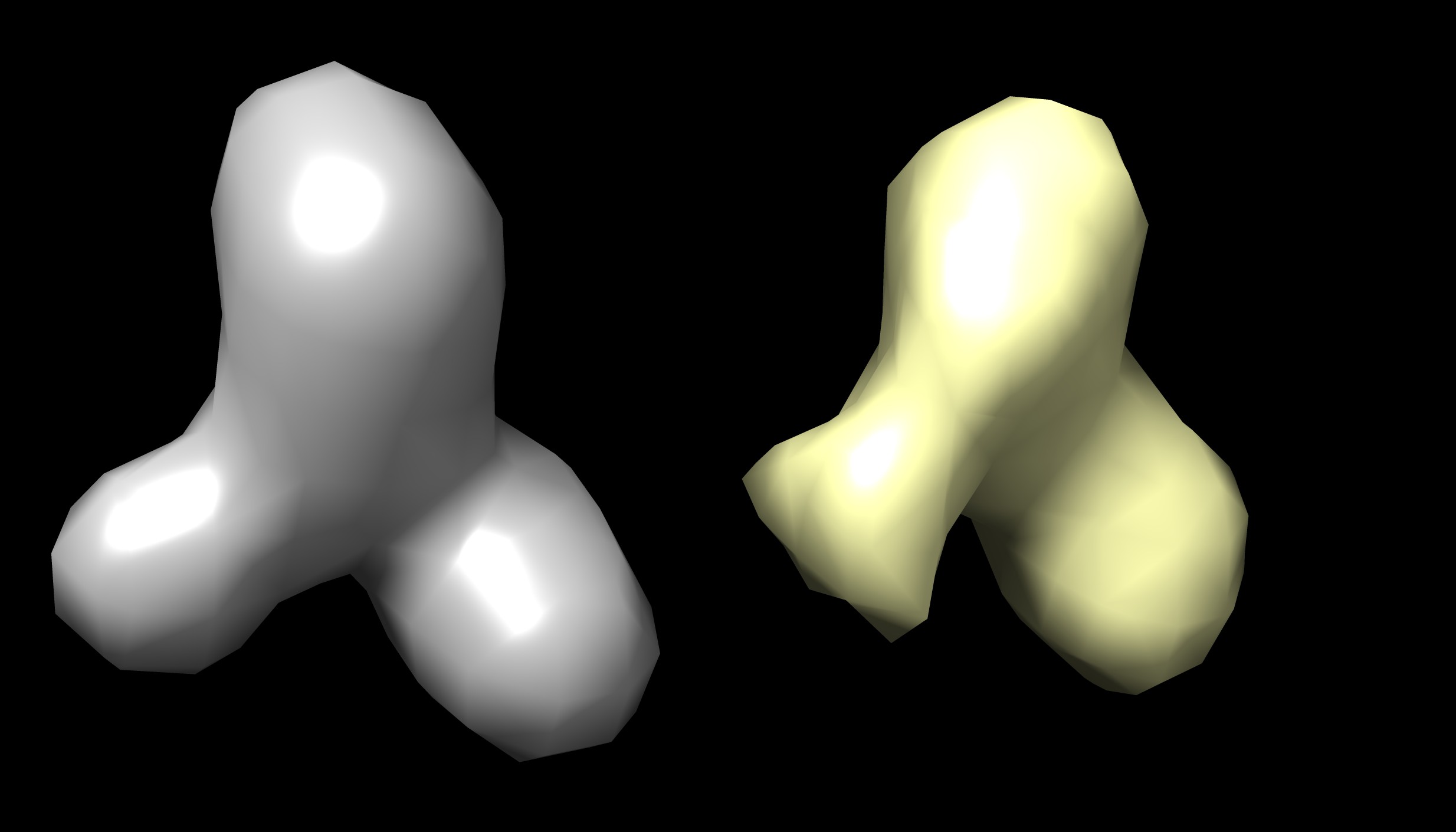} 
	\end{center}
	\caption{Reconstruction from moments of noisy images: an illustration taken from four different viewing angles. The estimation appears in yellow (left volume on the top left corner picture) and the original volume is in gray.} 
	\label{fig:recon_from_ims}
\end{figure}

\section{Discussion and Conclusion}

The method of moments offers an attractive approach for modeling volumes in cryo-EM. This statistical method completely bypasses the estimation of viewing directions by treating them directly as nuisance parameters. The assumption of a non-uniform distribution of viewing angles enables in many cases volume estimation using only the first and second moments of the data. This phenomenon opens the door for fast, single-pass reconstruction algorithms, based on inverting the map from the volume and distribution to the low-order statistics of the projection images.  

This paper extended Zvi Kam's original method of moments for cryo-EM to the setting of a non-uniform distribution of viewing directions.  We formulated the reconstruction problem using appropriate discretizations for the images, the volume, and the distribution. Then, we derived moment formulas using properties of the spherical harmonic functions and Wigner matrix entries.  Computational algebra was employed to analyze the resulting large-scale system of polynomial equations.  The analysis shows the seeming complication of an unknown, non-uniform distribution renders 3-D reconstruction \textit{easier} than in the uniform case, as now only first and second moments are required to determine a low-resolution expansion of the molecule, up to finitely many solutions. Intermediate cases were treated; remarkably, when the distribution is known and totally non-uniform over $\so{3}$, there is an efficient, provable algorithm to invert the first and second moments non-linearly.  Additionally, our work addressed several numerical and computational aspects of the method of moments.  An implementation of a trust-region method was presented and used to illustrate the advantages of our approach over Kam's classical approach by numerical experiments involving synthetic volumes. 

We regard our work as a definite, albeit initial step toward developing the method of moments for \textit{ab initio} modeling from experimental datasets.  Firstly, even in the synthetic cases considered here, further work on the optimization side is warranted.  Variations on our nonlinear cost function that incorporate a pre-conditioner, e.g., \eqref{eqn:LS1}, could be considered.  Secondly, other techniques for large-scale nonlinear least squares optimization should be tried, such as Levenberg-Marquardt \cite{lourakis2005levenberg} or Variable Projection \cite{chen2018regularized}, where in the latter one can exploit the linearity in the moments with respect to the distribution, by eliminating out the distribution. Thirdly, to get our method working on images, further effects, such as the CTF and imperfect centering of picked particles, should be incorporated into the moment formulas.  Fourthly, accurate covariance estimation in high dimensions requires eigenvalue shrinkage \cite{donoho2018optimal}, the theory for which may call for a modification in the non-uniform setting. 

To simplify our exposition, we have stuck to the asymmetric and homogeneous cases here, although both of these can be relaxed in the method of moments.  Specifically, as already noted in Kam's original paper~\cite{kam1980reconstruction}, point symmetries of molecules are reflected in the vanishing of certain expansion coefficients, see also~\cite{van1999pointgroup}. Therefore, MoM can be reformulated using fewer coefficients for symmetric molecules. This fact, alongside with further improvement of the representation of the distribution, may pave the way for recovery under practical cases of very restricted viewing angles, as reported in literature~\cite{baldwin2019non, glaeser2017opinion, naydenova2017measuring, tan2017addressing}. At the same time, heterogeneity, at least if it is finite and discrete, can be expressed using a mixture of volumes and a corresponding mixture of moments, see \cite{boumal2018heterogeneous, bandeira2017estimation}. In future work, computational algebra should be applied to these cases to check whether the first and second moments remain sufficient for unique recovery.

To conclude, we raise one further possibility, in some sense at odds with the message of this paper. In the non-uniform case, we have determined that the first and second moments are sufficient information-theoretically for volume recovery. Nonetheless, the resulting optimization landscape is potentially challenging, due to non-convexity or ill-conditioning. Thus, despite the increased \textit{statistical} cost of estimating the third moment, it seems worthwhile to ask what can be gained \textit{computationally} by reprising the third moment in MoM (or at least, using a carefully chosen slice of the third moment). Specifically, we would like to answer this question: can the third moment facilitate more efficient modeling at higher resolution?

\section*{Acknowledgments} The authors thank Nicolas Boumal, Peter B\"urgisser, Eitan Levin, Dilano Saldin and Yoel Shkolnisky for stimulating conversations, and the anonymous referees for their valuable comments.

This research was supported in parts by Award Number R01GM090200 from the NIGMS, FA9550-17-1-0291 from AFOSR, Simons Foundation Math$+$X Investigator Award, the Simons Collaboration on Algorithms and Geometry, the Moore Foundation Data-Driven Discovery Investigator Award, and NSF BIGDATA Award IIS-1837992. BL's research was supported by the European Research Council (ERC) under the European Union's Horizon 2020 research and innovation programme (grant agreement 723991 - CRYOMATH).

\bibliographystyle{plain}
\bibliography{NUcryoEM}

\appendix

\section{Prolate Spheroidal Wave Functions}   \label{app:pswf}
Here we describe key properties of the PSWFs, and propose a method for setting the expansion parameters $L$, $S(\ell)$, $Q$, and $T(q)$. We begin with the three-dimensional PSWFs, where we describe important properties established in the literature~\cite{slepian1964prolate,katz2019sampling,serkh2015prolates}, and outline our choice for setting $L$ and $S(\ell)$, accordingly. Then, we proceed with a short analogous description for the two-dimensional PSWFs (summarizing results of~\cite{slepian1964prolate}), and derive a method for choosing $Q$ and $T(q)$ by directly exploiting the fact that the images to be expanded are tomographic projections of a bandlimited and localized volume function (employing our previous representation for the volume function).

\subsection{Volume function representation with three-dimensional PSWFs}
Let $\Phi:\mathbb{R}^3 \rightarrow \mathbb{R}$ be a square integrable (volume) function on $\mathbb{R}^3$, representing the true underlying electric potential of the molecule, and denote by $\hat{\Phi}$ its three-dimensional Fourier transform. 
It is common practice to assume that $\Phi(x)$ is bandlimited (\ie, $\hat{\Phi}$ is restricted to a ball) while being localized in space. Functions satisfying this property are naturally represented by three-dimensional PSWFs, as detailed next.

We say that the function $\Phi(x)$ as \textit{$c$-bandlimited} if $\hat{\Phi}(\omega)$ vanishes outside a ball of radius $c$. That is, $\Phi$ is {$c$-bandlimited} if
\begin{equation} 
\Phi(x)= \left(\frac{1}{2\pi}\right)^{3} \int_{c\mathbf{B}} \hat{\Phi}(\omega)e^{\imath \omega x}d\omega, \quad x \in\mathbb{R}^{3},
\end{equation}
where $\mathbf{B}$ is the unit ball.
Among all $c$-bandlimited functions, the three-dimensional PSWFs on $\mathbf{B}$~\cite{slepian1964prolate} are the most energy concentrated in $\mathbf{B}$, while constituting an orthonormal system over ${\mathcal{L}^2(\mathbf{B})}$. Namely, they satisfy
\begin{align}
\begin{aligned}
\Psi_i = &\operatorname{argmin}_{\psi} {\left\Vert \psi \right\Vert_{\mathcal{L}^2(\mathbb{R}^3)}} \\
&\text{ subject to} \;\; {\left\Vert \psi \right\Vert_{\mathcal{L}^2(\mathbf{B})}} = 1, \;\; \left\langle \psi, \Psi_j\right\rangle_{\mathcal{L}^2(\mathbf{B})} = 0, \;\; \forall j<i,
\end{aligned}
\label{eqn:PSWFs 3-D optimal conc}
\end{align}
for $i=1,2,\ldots$, \ie, $\Psi_1$ is the most energy concentrated $c$-bandlimited function, $\Psi_2$ is the most energy concentrated $c$-bandlimited function orthogonal to $\Psi_1$, and so on. Three-dimensional PSWFs can be obtained as the solutions to the integral equation
\begin{equation}
\alpha\Psi(x)=\int_{\mathbf{B}}\Psi(\omega)e^{\imath c \omega x}d\omega ,\quad x\in\mathbf{B},\label{eq:Basic PSWF eq}
\end{equation}
where we denote the solutions (the PSWFs with bandlimit $c$) as $\Psi_{\ell,m,s}^c(x)$ and their corresponding eigenvalues as $\alpha_{\ell,m,s}^c$, where the enumeration over $i$ in~\eqref{eqn:PSWFs 3-D optimal conc} is replaced with an enumeration over the triplet $\ell,m,s$ described below, and the eigenvalues appear in non-increasing ordering with respect to the original enumerate $i$.
$\Psi_{\ell,m,s}^c(x)$ and $\alpha_{\ell,m,s}^c$ together form the eigenfunctions and eigenvalues of~\eqref{eq:Basic PSWF eq}, with $m\in\mathbb{Z}$, $\ell\in\mathbb{N}\cup\left\lbrace 0 \right\rbrace$, and $s \in\mathbb{N}$. Furthermore, the functions $\Psi_{\ell,m,s}^c(x)$ are orthogonal on
both $\mathbf{B}$ and $\mathbb{R}^{3}$ using the standard $\mathcal{L}^2$ inner products on $\mathbf{B}$ and $\mathbb{R}^{3}$, respectively, and are dense in both the
class of $\mathcal{L}^{2}(\mathbf{B)}$ functions and in the class of $c$-bandlimited
functions on $\mathbb{R}^{3}$.
In spherical coordinates, the functions $\Psi_{\ell,m,s}^c(x)$ agree with the form in the right-hand side of~\eqref{eqn:phi_hat}, and can be expressed as
\begin{equation}
\Psi_{\ell,m,s}^c(r,\theta,\varphi)=F_{\ell,s}^c(r)Y^m_\ell(\theta,\varphi), 
\end{equation}
where $Y^m_\ell(\theta,\varphi)$ are the spherical harmonics (see~\eqref{eqn:sph_har}).
Numerical evaluation of the three-dimensional PSWFs (in particular of the radial part $F_{\ell,s}^c$) was considered in~\cite{lederman2017numerical}.

From the properties of the three-dimensional PSWFs mentioned above, any volume function $\Phi(x)\in\mathcal{L}^2(\mathbb{R}^3)$ can expanded in $\mathbf{B}$ as
\begin{equation} \label{eq: Infinite PSWF expansion}
\Phi(x) = \sum_{\ell= 0}^{\infty} \sum_{m=-\ell}^{\ell} \sum_{s=1}^{\infty} \widetilde{A}_{\ell,m,s} \Psi_{\ell,m,s}^c(x), \quad x \in \mathbf{B},  \qquad \widetilde{A}_{\ell,m,s} = \int_{\mathcal{\mathbf{B}}} \Phi(x) \overline{\Psi_{\ell,m,s}^c(x)} dx,
\end{equation}
where $\overline{(\cdot)}$ denotes complex conjugation.
Next, we consider the truncation of the expansion in~\eqref{eq: Infinite PSWF expansion}, where it is convenient to bound the resulting truncation error in terms of the assumed spatial localization of $\Phi(x)$.
Towards this end, we say that the function $\Phi(x)$ is \textit{$\varepsilon$-concentrated} if 
\begin{equation}
\sqrt{\int_{x \notin \mathbf{B} } {\left| \Phi(x) \right|}^2 dx} \leq \varepsilon.
\end{equation}
Additionally, we define the normalized eigenvalues
\begin{equation}
\lambda^c_{\ell,m,s}=\left(\frac{c}{2\pi}\right)^3 \left\vert \alpha_{\ell,m,s}\right\vert^2,
\end{equation}
where we mention that $0\leq\lambda^c_{\ell,m,s}\leq 1$, $\lambda^c_{\ell,m,s} = \lambda^c_{\ell,0,s}$ for all triplets $(\ell,m,s)$, and $\lambda^c_{\ell,m,s}\underset{s\rightarrow \infty}{\longrightarrow} 0$ for every $\ell$. Now, we propose to set $S(\ell)$ according to
\begin{equation} \label{eq:PSWFs truncation rule}
S(\ell) = \max_{s\in\mathbb{N}}\left\{s:\;\lambda_{\ell,0,s}^{c} \geq \delta \right\},
\end{equation}
where $\delta\in (0,1)$ is some constant, and set $L$ to be the largest $\ell$ for which $S(\ell)$ is defined (\ie, such that the set $\left\{s:\;\lambda_{\ell,0,s}^{c} \geq \delta \right\}$ is non-empty). Correspondingly, the volume function resulting from the truncating the expansion in~\eqref{eq: Infinite PSWF expansion}, according to the chosen $S(\ell)$ and $L$, is
\begin{equation}
\phi(x) = \sum_{\ell= 0}^{L} \sum_{m=-\ell}^{\ell} \sum_{s=1}^{S(\ell)} \widetilde{A}_{\ell,m,s} \Psi_{\ell,m,s}^c(x). \label{eqn:PSWF truncated exp space}
\end{equation}
The following proposition bounds the error of approximating $\Phi(x)$ by $\phi(x)$.
\begin{proposition}
	Let $\Phi(x)$ be $c$-bandlimited with a unit $\mathcal{L}^2\left( \mathbf{B}\right)$ norm and assume it is $\varepsilon$-concentrated. Then,
	\begin{equation}
	{\left\Vert \Phi - \phi \right\Vert}_{\mathcal{L}^2\left( \mathbf{B}\right)}
	\leq \varepsilon\sqrt{\frac{\delta}{1-\delta}}. \label{eq:Total approx err 1}
	\end{equation}
\end{proposition}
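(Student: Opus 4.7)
The plan is to exploit two key facts about the three-dimensional PSWFs: first, that $\{\Psi_{\ell,m,s}^{c}\}$ forms an orthonormal basis of $\mathcal{L}^{2}(\mathbf{B})$, and second, that for $c$-bandlimited functions the same system is orthogonal on all of $\mathbb{R}^{3}$ with squared $\mathcal{L}^{2}(\mathbb{R}^{3})$-norms equal to $1/\lambda_{\ell,m,s}^{c}$. Granting the latter standard identity (it follows by rewriting the integral equation~\eqref{eq:Basic PSWF eq} as an eigenvalue equation for the compose-bandlimit-then-truncate operator, as in Slepian), the argument reduces to bookkeeping on a single sum.

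More concretely, I would proceed as follows. First, expand $\Phi \in \mathcal{L}^{2}(\mathbf{B})$ in PSWFs so that
\[
\|\Phi\|_{\mathcal{L}^{2}(\mathbf{B})}^{2} \,=\, \sum_{\ell,m,s} |\widetilde{A}_{\ell,m,s}|^{2} \,=\, 1.
\]
Next, since $\Phi$ is $c$-bandlimited, its unique PSWF expansion on $\mathbb{R}^{3}$ has the same coefficients $\widetilde{A}_{\ell,m,s}$ (restricting to $\mathbf{B}$ and using orthonormality there gives this). Using the $\mathcal{L}^{2}(\mathbb{R}^{3})$-norms $\|\Psi_{\ell,m,s}^{c}\|_{\mathcal{L}^{2}(\mathbb{R}^{3})}^{2} = 1/\lambda_{\ell,m,s}^{c}$, Parseval yields
\[
\int_{\mathbb{R}^{3}\setminus \mathbf{B}} |\Phi|^{2}\, dx \,=\, \sum_{\ell,m,s} \frac{|\widetilde{A}_{\ell,m,s}|^{2}}{\lambda_{\ell,m,s}^{c}} \,-\, \sum_{\ell,m,s} |\widetilde{A}_{\ell,m,s}|^{2} \,=\, \sum_{\ell,m,s} |\widetilde{A}_{\ell,m,s}|^{2} \,\frac{1 - \lambda_{\ell,m,s}^{c}}{\lambda_{\ell,m,s}^{c}}.
\]
The $\varepsilon$-concentration hypothesis bounds the left-hand side by $\varepsilon^{2}$.

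Finally, I would apply the truncation rule~\eqref{eq:PSWFs truncation rule}: the coefficients discarded by $\phi$ are precisely those with $\lambda_{\ell,m,s}^{c} < \delta$ (using that $\lambda_{\ell,m,s}^{c} = \lambda_{\ell,0,s}^{c}$). On that index set, $\tfrac{1-\lambda_{\ell,m,s}^{c}}{\lambda_{\ell,m,s}^{c}} > \tfrac{1-\delta}{\delta}$, so keeping only these terms in the sum above gives
\[
\varepsilon^{2} \,\geq\, \frac{1-\delta}{\delta}\sum_{\lambda_{\ell,m,s}^{c} < \delta}|\widetilde{A}_{\ell,m,s}|^{2} \,=\, \frac{1-\delta}{\delta}\,\|\Phi-\phi\|_{\mathcal{L}^{2}(\mathbf{B})}^{2},
\]
from which~\eqref{eq:Total approx err 1} follows by taking square roots.

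The only nontrivial ingredient is the dual normalization $\|\Psi_{\ell,m,s}^{c}\|_{\mathcal{L}^{2}(\mathbb{R}^{3})}^{2} = 1/\lambda_{\ell,m,s}^{c}$ relative to $\|\Psi_{\ell,m,s}^{c}\|_{\mathcal{L}^{2}(\mathbf{B})}^{2} = 1$; once this is in hand the proof is essentially a one-line tail-bound argument. Since this identity is standard in the PSWF literature (and implicit in the optimization characterization~\eqref{eqn:PSWFs 3-D optimal conc}), I would either cite it from~\cite{slepian1964prolate,lederman2017numerical} or briefly derive it from the integral equation, rather than reproving it here.
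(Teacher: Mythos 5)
Your argument is correct and, unlike the paper, it is self-contained: the paper's ``proof'' of this proposition is a one-line citation to Theorem~5 of~\cite{katz2019sampling} combined with the truncation rule~\eqref{eq:PSWFs truncation rule}, so you are essentially reconstructing the content of that cited theorem. The chain of steps checks out: Parseval on $\mathbf{B}$ gives $\sum|\widetilde{A}_{\ell,m,s}|^2=1$; the dual normalization $\|\Psi^c_{\ell,m,s}\|^2_{\mathcal{L}^2(\mathbb{R}^3)}=1/\lambda^c_{\ell,m,s}$ together with the fact that the $\mathbb{R}^3$-expansion of a $c$-bandlimited $\Phi$ carries the same coefficients $\widetilde{A}_{\ell,m,s}$ (both standard consequences of the integral equation~\eqref{eq:Basic PSWF eq} and self-adjointness of the truncate-then-bandlimit operator) yields $\int_{\mathbb{R}^3\setminus\mathbf{B}}|\Phi|^2=\sum|\widetilde{A}_{\ell,m,s}|^2\tfrac{1-\lambda}{\lambda}\le\varepsilon^2$; and since every term of this sum is nonnegative, restricting to the discarded indices --- which by the monotone decay of $\lambda^c_{\ell,0,s}$ in $s$ and the definitions of $S(\ell)$ and $L$ all satisfy $\lambda<\delta$, hence $\tfrac{1-\lambda}{\lambda}>\tfrac{1-\delta}{\delta}$ --- gives exactly the claimed bound. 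Two small points worth making explicit if you write this up: (i) the identification of the $\mathbb{R}^3$-coefficients with the $\mathbf{B}$-coefficients is the step that actually uses bandlimitedness of $\Phi$ (it follows from $\langle\Phi,\Psi\rangle_{\mathbb{R}^3}=\tfrac{1}{\lambda}\langle\Phi,\Psi\rangle_{\mathbf{B}}$ for bandlimited $\Phi$), so it deserves a sentence rather than being folded into ``the same coefficients''; and (ii) for the final inequality you only need that the discarded index set is \emph{contained} in $\{\lambda<\delta\}$, not that it equals it, so the ``precisely'' claim can be weakened without loss. The trade-off versus the paper's approach is clear: you gain a transparent, two-line tail bound at the cost of importing (or rederiving) the classical $1/\lambda$ normalization from the Slepian theory.
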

The proof follows immediately from Theorem 5 in~\cite{katz2019sampling} and from our choices of $S(\ell)$ and $L$.
It is evident that the approximation error in the right-hand side of~\eqref{eq:Total approx err 1} can be made arbitrarily small by taking $\delta$ sufficiently small. Furthermore, in the case that $\Phi(x)$ is localized in space, \ie, $\varepsilon \ll 1$, we can take $\delta$ to be large, possibly even close to $1$, and still get  approximation errors sufficiently small for our purposes. 

\subsubsection{Length of the expansion}
Clearly, the number of basis functions taking part in the expansion~\eqref{eqn:PSWF truncated exp space}, which is given explicitly by $\sum_{\ell=0}^L \sum_{m=-\ell}^\ell S(\ell)$, depends on the number of normalized eigenvalues $\lambda^c_{\ell,m,s}$ exceeding $\delta$. In this respect, the normalized eigenvalues $\lambda^c_{\ell,m,s}$ are known to admit the following three distinct regions of behavior (when sorted in descending order). The first is called the ``flat region'', where $\lambda^c_{\ell,m,s}$ take values very close to $1$, the second is called the ``transitional region'', where $\lambda^c_{\ell,m,s}$ shift rapidly from values close to $1$ to values close to $0$, and the third is called the ``decay region'', where $\lambda^c_{\ell,m,s}$ are very close to $0$ and exhibit a super-exponential decay rate. 
As for the number of basis functions chosen according to~\eqref{eq:PSWFs truncation rule}, the following holds~\cite{serkh2015prolates}:
\begin{align} \label{eq:Asymptotic number of terms}
\sum_{\ell=0}^L \sum_{m=-\ell}^\ell S(\ell) & = \left\vert \left\{(\ell,m,s):\;\lambda_{\ell,m,s}^{c} \geq \delta \right\} \right|  \nonumber \\
&=  \frac{c^3}{4.5\pi} + \frac{c^2}{2\pi^2} \log{(c)}\log{(\frac{1-\delta}{\delta})} + o(c^2\log{(c)}), 
\end{align}
where the first, second, and third terms on the right-hand side of~\eqref{eq:Asymptotic number of terms} correspond to the number of normalized eigenvalues $\lambda^c_{\ell,m,s}$ exceeding $\delta$ from the flat region, the transitional region, and the decay region of the eigenvalues, respectively. Clearly, the asymptotically dominant term is $\mathcal{O}(c^3)$, which corresponds to the number of terms in the expansion chosen from the flat region. Additionally, we need an extra $\mathcal{O}(c^2\log{(c)})$ terms if we take $\delta$ to be small (note that the second term in the right hand-side of~\eqref{eq:Asymptotic number of terms} is negative for $\delta>0.5$, meaning that asymptotically we need less than ${c^3}/{4.5\pi}$ terms for values of $\delta$ close to $1$). The remaining $o(c^2\log{(c)})$ terms from the decay region are negligible compared to the leading asymptotic terms.

\subsubsection{Fourier domain representation}
Up to this point, we have shown that three-dimensional PSWFs are naturally adapted for expanding a volume function $\Phi(x)$ which is bandlimited and localized in space, where we provided an appropriate error bound~\eqref{eq:Total approx err 1}. However, note that in~\eqref{eqn:phi_hat} we actually expand the Fourier transform of the molecular potential. We now connect our previous expansion of $\Phi(x)$ with the expansion of its Fourier transform, and show that in fact (and uniquely for PSWFs) the two coincide, in the sense that expanding a function in three-dimensional PSWFs is equivalent to expanding its Fourier transform in three-dimensional PSWFs (after an appropriate scaling and dilation). 
Let $\hat{\Psi}_{\ell,m,s}$ denote the three-dimensional Fourier transform of $\Psi_{\ell,m,s}$, then by~\eqref{eq:Basic PSWF eq} it is easy to verify that 
\begin{equation}
\hat{\Psi}_{\ell,m,s}(\omega) =  \frac{(2\pi)^3}{c^3 \alpha_{\ell,m,s}} \Psi_{\ell,m,s}(\frac{\omega}{c}) \cdot \mathbf{1}_{c\mathbf{B}}(\omega), \label{eqn:PSWF 3-D Fourier}
\end{equation}
where $\mathbf{1}_{c\mathbf{B}}(\omega)$ is the indicator function on $c\mathbf{B}$. It is evident that the Fourier transform of each three-dimensional PSWF is equal to itself up to a constant factor, a dilation by $c$, and a restriction to a ball of radius $c$. Consequently, by taking the Fourier transform of~\eqref{eqn:PSWF truncated exp space} we have
\begin{equation}
\hat{\phi}(\omega) = \sum_{\ell= 0}^{L} \sum_{m=-\ell}^{\ell} \sum_{s=0}^{S_\ell} {A}_{\ell,m,s} \Psi_{\ell,m,s}^c(\frac{\omega}{c}) /{c^{3/2}} ,
\qquad \omega\in c\mathbf{B}, 
\end{equation}
where 
\begin{equation}
{A}_{\ell,m,s} = \frac{8\pi^3}{c^{3/2}\alpha_{\ell,m,s}} \widetilde{A}_{\ell,m,s}. \label{eq: 3-D PSWF Fourier coeffs relation}
\end{equation}

We conclude this part as follows. Given a bandlimit $c$ (typically chosen as the Nyquist frequency corresponding to the projection images' resolution), we take the radial part $F_{\ell,m,s}(k)$ of~\eqref{eqn:phi_hat} as $F_{\ell,m,s}^c(k/c)/{c^{3/2}} \cdot \mathbf{1}_{c}(k)$, where $\mathbf{1}_{c}(k)$ is the indicator function on $[0,c]$, and $F_{\ell,m,s}^c(r)$ is the radial part of the three-dimensional PSWFs on $\mathbf{B}$ (the factor $1/{c^{3/2}}$ ensures that $F_{\ell,m,s}(k)$ are orthonormal over $[0,\infty)$ w.r.t the measure $k^2 dk$). Then, setting $S(\ell)$ according to~\eqref{eq:PSWFs truncation rule} for a given parameter $\delta$ allows for the controlled approximation error~\eqref{eq:Total approx err 1}.

\subsection{Projection image representation with two-dimensional PSWFs} \label{sec:appendix_2D_PSWF}
In the sequel, we are interested in providing a suitable representation for the projection images of the rotated copies of $\phi(x)$. By the Fourier slice theorem, the two-dimensional Fourier transforms of such projections are equal to slices from the three-dimensional Fourier transform of $\phi(x)$ (\ie, of $\hat{\phi}(\omega)$). Therefore, if $\phi(x)$ is $c$-bandlimted, then the projection images are bandlimited to a disk of radius $c$. Additionally, we expect the projection images to be localized in the unit disk if $\phi(x)$ is sufficiently localized in the unit ball. For such projection images, two-dimensional PSWFs are expected to provide a natural representation (see~\cite{landa2017approximation}).

We briefly summarize properties of the two-dimensional PSWFs which are used in our context. In essence, the properties of the two-dimensional PSWFs are analogous to those of the three-dimensional PSWFs when replacing the unit ball $\mathbf{B}$ with the unit disk $\mathbf{D}$.
Let $P:\mathbb{R}^2 \rightarrow \mathbb{R}$ be a square integrable function on $\mathbb{R}^2$, representing a tomographic projection of $\phi$.
We say that $P(x)$ as \textit{$c$-bandlimited} if its two-dimensional Fourier transform, denoted by $\hat{P}(\omega)$, vanishes outside a disk of radius $c$. That is, $P$ is {$c$-bandlimited} if
\begin{equation} 
P(x)= \left(\frac{1}{2\pi}\right)^{2} \int_{c\mathbf{D}} \hat{P}(\omega)e^{\imath \omega x}d\omega, \quad x \in\mathbb{R}^{2}.
\end{equation}
Among all $c$-bandlimited functions, the two-dimensional PSWFs on $\mathbf{D}$ are the most energy concentrated in $\mathbf{D}$, that is, they satisfy~\eqref{eqn:PSWFs 3-D optimal conc} when replacing $\mathbf{B}$ with $\mathbf{D}$, while constituting an orthonormal system over ${\mathcal{L}^2(\mathbf{D})}$. The two-dimensional PSWFs were derived and analyzed in~\cite{slepian1964prolate}, and were shown to be the solutions to the integral equation
\begin{equation}
\beta\psi(x)=\int_{\mathbf{D}}\psi(\omega)e^{\imath c \omega x}d\omega ,\quad x\in\mathbf{D}.\label{eq:Basic 2-D PSWF eq}
\end{equation}
We denote the PSWFs with bandlimit $c$ as $\psi_{q,t}^c(x)$, and their corresponding eigenvalues as $\beta_{q,t}^c$,
which together form the eigenfunctions and eigenvalues of~\eqref{eq:Basic 2-D PSWF eq}, with $q\in\mathbb{Z}$, and $t \in\mathbb{N}$. Furthermore, the functions $\psi_{q,t}^c(x)$ are orthogonal on
both $\mathbf{D}$ and $\mathbb{R}^{2}$ using the standard $\mathcal{L}^2$ inner products on $\mathbf{D}$ and $\mathbb{R}^{2}$, respectively, and are dense in both the
class of $\mathcal{L}^{2}(\mathbf{D)}$ functions and in the class of $c$-bandlimited
functions on $\mathbb{R}^{2}$.
In polar coordinates, the functions $\psi_{q,t}^c(x)$ agree with the form in the right-hand side of~\eqref{eqn:image_expand}, and can be expressed as
\begin{equation}
\psi_{q,t}^c(r,\varphi)=\frac{1}{\sqrt{2\pi}}f_{q,t}^c(r)e^{\imath q \varphi}, 
\end{equation}
where the eigenfunctions $\psi_{q,t}^c(x)$ are normalized to have an $\mathcal{L}^2(\mathbf{D})$ norm of $1$. 
Numerical evaluation of the two-dimensional PSWFs was considered in~\cite{shkolnisky2007prolate}.

From the properties of the two-dimensional PSWFs mentioned above, any function $P(x)\in\mathcal{L}^2(\mathbb{R}^2)$ can be expanded in $\mathbf{D}$ as
\begin{equation}
P(x) = \sum_{q= -\infty}^{\infty}\sum_{t=0}^{\infty} \widetilde{a}_{q,t} \psi_{q,t}^c(x), \quad x \in \mathbf{D}, \quad \qquad \widetilde{a}_{q,t} = \int_{\mathcal{\mathbf{D}}} P(x) \overline{\psi_{q,t}^c(x)} dx.
\end{equation}
Now, considering the truncated expansion
\begin{equation}
I(x) := \sum_{q= -Q}^{Q}\sum_{t=0}^{T(q)} \widetilde{a}_{q,t} \psi_{q,t}^c(x), \label{eqn:PSWFs 2-D truncated expansion}
\end{equation}
we are interested in controlling the error
\begin{equation}
\left\Vert P - I \right\Vert^2_{\mathcal{L}^2_\mathbf{D}} = \sum_{q=-Q}^Q \sum_{t > T(q)} \left\vert \widetilde{a}_{q,t} \right\vert^2 + \sum_{|q|>Q} \sum_{t = 0}^\infty \left\vert \widetilde{a}_{q,t} \right\vert^2. \label{eqn:2-D PSWF approx err}
\end{equation}
From~\eqref{eq:Basic 2-D PSWF eq}, the Fourier transform of $\psi_{m,k}$ can be expressed as
\begin{equation}
\hat{\psi}_{m,k}(\omega) = \frac{4\pi^2}{c^2 \beta_{m,k}} \psi_{m,k}(\frac{\omega}{c}) \cdot \mathbf{1}_{c\mathbf{D}}(\omega),
\end{equation}
where $\mathbf{1}_{c\mathbf{D}}(\omega)$ is the indicator function on $c\mathbf{D}$, which is analogous to the relation between the three-dimensional PSWFs $\Psi_{\ell,m,s}^c$ and their Fourier transforms $\hat{\Psi}_{\ell,m,s}^c$ in~\eqref{eqn:PSWF 3-D Fourier}. Continuing, taking the Fourier transform of~\eqref{eqn:PSWFs 2-D truncated expansion} gives
\begin{equation}
\hat{I}(\omega) = \sum_{q= -Q}^{Q}\sum_{t=0}^{T(q)} {a}_{q,t} \psi_{q,t}^c(\frac{\omega}{c})\sqrt{2\pi}/c, \qquad \omega\in c\mathbf{D} \label{eq:2-D Fourier PSWF expansion}
\end{equation}
where 
\begin{equation}
{a}_{q,t} = \frac{(2\pi)^{3/2}}{c \beta_{q,t}} \widetilde{a}_{q,t}. \label{eq:2-D PSWF Fourier coeffs relation}
\end{equation}
We will now relate 2D basis representation error to that of the 3D basis functions. Comparing the 2D expansion~\eqref{eq:2-D Fourier PSWF expansion} with the relation between 2-D and 3-D coefficients~\eqref{eqn:a_A_relation}, while employing~\eqref{eq:2-D PSWF Fourier coeffs relation} and~\eqref{eq: 3-D PSWF Fourier coeffs relation} we have
\begin{equation}
\widetilde{a}_{q,t} = \frac{c \beta_{q,t} }{(2\pi)^{3/2}} {a}_{q,t} = \sum_{\ell =\vert q \vert}^L \sum_{s=1}^{S(\ell)} \sum_{m=-\ell}^\ell   \widetilde{A}_{\ell,m,s}  \, U^{\ell}_{m,q}(R) \, \eta_{\ell,s}^{q, t}, 
\end{equation}
for $|q|\leq L$, where $\widetilde{a}_{q,t}=0$ for $|q|>L$, and
\begin{equation}
\eta_{\ell,s}^{q, t} = \frac{c \beta_{q,t} }{(2\pi)^{3/2}} \frac{8\pi^3}{c^{3/2}\alpha_{\ell,m,s}} \gamma_{\ell,s}^{q, t} = \frac{(2\pi)^{3/2} \beta_{q,t} }{\sqrt{c} \alpha_{\ell,m,s}} \gamma_{\ell,s}^{q, t},
\end{equation}
where $\gamma_{\ell,s}^{q, t}$ is from~\eqref{eqn:gamma_def}.
Using the Cauchy--Schwarz inequality, we can write
\begin{align}
|\widetilde{a}_{q,t}| &\leq \sum_{\ell =\vert q \vert}^L \sum_{s=1}^{S(\ell)} \sum_{m=-\ell}^\ell   | \widetilde{A}_{\ell,m,s}  \, U^{\ell}_{m,q}(R) \, \eta_{\ell,s}^{q, t}| \nonumber \\
&\leq \left( \sum_{\ell =\vert q \vert}^L \sum_{s=1}^{S(\ell)} \sum_{m=-\ell}^\ell \left\vert \widetilde{A}_{\ell,m,s} \right\vert^2 \right)^{1/2} \left(\sum_{\ell =\vert q \vert}^L \sum_{s=1}^{S(\ell)} \sum_{m=-\ell}^\ell \left\vert \, U^{\ell}_{m,q}(R) \, \eta_{\ell,s}^{q, t} \right\vert^2\right)^{1/2} \nonumber \\ 
&\leq \left\Vert \phi \right\Vert_{\mathcal{L}^2(\mathbf{B})} \cdot \left(\sum_{\ell =\vert q \vert}^L \sum_{s=1}^{S(\ell)} |\eta_{\ell,s}^{q,t}|^2\right)^{1/2}, \label{eqn:2-D PSWFs a_qt bound}
\end{align}
where we also used the fact that $U^{\ell}(R)$ is a unitary matrix. Finally, taking $Q=L$ and assuming w.l.o.g that $\left\Vert \phi \right\Vert_{\mathcal{L}^2(\mathbf{B})}=1$, we obtain from~\eqref{eqn:2-D PSWF approx err} and~\eqref{eqn:2-D PSWFs a_qt bound} that
\begin{equation}
\left\Vert P - I \right\Vert^2_{\mathcal{L}^2_\mathbf{D}} \leq \sum_{q=-L}^L \sum_{t > T(q)} \sum_{\ell =\vert q \vert}^L \sum_{s=1}^{S(\ell)} |\eta_{\ell,s}^{q,t}|^2.
\end{equation}
Given a prescribed accuracy $\epsilon$, for every $-L\leq q\leq L$ we choose $T(q)$ to be the smallest integer such that
\begin{equation} \label{eqn:epsilon_eq}
\sum_{t > T(q)} \sum_{\ell =\vert q \vert}^L \sum_{s=1}^{S(\ell)} |\eta_{\ell,s}^{q,t}|^2 \leq \frac{\epsilon}{2L+1},
\end{equation}
which results in
\begin{equation}
\left\Vert P - I \right\Vert^2_{\mathcal{L}^2_\mathbf{D}} \leq \epsilon,
\end{equation}
where $\eta_{\ell,s}^{q,t}$ are computed by evaluating $\gamma_{\ell,s}^{q,t}$ of~\eqref{eqn:gamma_def} via numerical integration (using Gauss-Legendre quadratures).
Note that the right-hand side of~\eqref{eqn:epsilon_eq} is determined by the decay rate of $\eta_{\ell,s}^{q,t}$ in $t$, which is dominated by the decay rate of the the eigenvalues of the two-dimensional PSWFs $\beta_{q,t}$. Those are known to admit a rapid decay in the form of a super-exponential decay rate following a certain transitional region (see~\cite{boulsane2018finite,serkh2015prolates}). Hence, if $T(q)$ is sufficiently large then~\eqref{eqn:epsilon_eq} can be satisfied for an arbitrarily small $\epsilon$ with a marginal increase in the number of required terms. Last, we mention that when provided with images sampled on a Cartesian grid, the coefficients $\widetilde{a}_{q,t}$ can be approximated accurately from the images by fast algorithms~\cite{landa2017approximation,landa2017steerable}.

\section{Linearizing polynomial maps with the Jacobian matrix} \label{sec:appendix_Jacobian}
In this section, we describe the linearization technique from computational algebraic geometry we used to obtain the uniqueness results in Tables~\ref{table:jac-kn-inp},~\ref{table:jac-unk-tot},~\ref{table:jac-ukn-inp} from Section~\ref{sec:algebraic geometry}.
The first paper to apply algebraic geometry techniques to cryo-EM was \cite{bandeira2017estimation}. Nevertheless, similar Jacobian tests have been used in other applications such as for testing rigidity in sensor network localization, see e.g.,~\cite{gortler2010characterizing} and testing whether a matrix can be completed into a low-rank matrix~\cite{singer2010uniqueness}. 

To state the method, we fix $\mathbb{C}^{N} =\mathbb{C}^{N'} \oplus \mathbb{C}^{N''}$, let $\pi'$ and $\pi''$ be projection onto the factors, and consider a \textit{polynomial map} $F=(F_1, \ldots, F_M): \mathbb{C}^{N} \rightarrow \mathbb{C}^{M}$  (that is, each coordinate function $F_i = F_i(x_1,\ldots,x_M)$ is a polynomial on $\mathbb{C}^{N}$).  While $F$ is generally a nonlinear map, its first derivative at $q \in \mathbb{C}^{N}$ is a linear map represented by the Jacobian matrix 
\begin{equation}
dF := \left( \, \frac{\partial F_{j}}{\partial x_{i}} \, \right)_{\substack{i=1,\ldots,M \\ j = 1, \ldots, N}}
\end{equation}
In addition, we define the \textit{fiber} in $q \in \mathbb{C}^{N}$ by 
\[ F_{q} := \{\widetilde{q} \in \mathbb{C}^{N} \mid F(\widetilde{q}) = F(q) \} \subset \mathbb{C}^{N}, \] 
and the \textit{projected fiber} by 
\[ \overline{\pi'(F_{q})} \subset \mathbb{C}^{N'}. \]
For $q' \in \mathbb{C}^{N'}$ and $q'' \in \mathbb{C}^{N''}$, define the \textit{specialized fiber} by 
$$ \left(F \vert_{\mathbb{C}^{N'} \oplus q''} \right)_{q'} = \{ \widetilde{q}' \in \mathbb{C}^{N'} \mid F(\widetilde{q}' \oplus q'') = F(q' \oplus q'') \} \subset \mathbb{C}^{N'}. $$ 

Because $F$ is described by polynomials, there is a tight relationship between the dimension of fibers of $F$ (as algebraic varieties) and the dimension of the kernels of $dF$ (as linear spaces). This is summarized by the Jacobian tests below.  Somewhat remarkably, the linear algebra tests are done at a \textit{single point} in the domain of $F$, but imply algebraic geometric statements for \textit{almost all} points in the domain of $F$.

\begin{theorem}\label{thm:jac}
Suppose it is known that, generically, the fiber, projected fiber and specialized fiber have dimensions $\geq d_1, d_2, d_3$, respectively (if we have no such knowledge, then $d_{1} = d_{2} = d_{3} = 0$).  Choose particular points $q_{0} \in \mathbb{C}^{N}$, $q'_{0} \in \mathbb{C}^{N'}$ and $q''_{0} \in \mathbb{C}^{N''}$.
\begin{itemize} \setlength\itemsep{0.4em}
\item[$-$] \textup{\underline{Vanilla Jacobian test:}} if $\textup{rank} \, dF(q_0) = N-d_1$, then generic fibers have dimension exactly $d_1$.
\item[$-$]  \textup{\underline{Projected Jacobian test:}} if $\dim \, \pi' (\textup{ker} \, dF(q_0))  = d_2$, then generic projected fibers have dimension exactly $d_2$.
\item[$-$]  \textup{\underline{Specialized Jacobian test:}} if $\textup{rank } \, d \left( F \vert_{\mathbb{C}^{N'} \oplus \,  q_0''} \right)(q_0')  = N' - d_{3}$, then generic specialized fibers have dimension exactly $d_3$.
\end{itemize}
\end{theorem}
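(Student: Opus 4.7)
The plan is to derive all three tests from two standard ingredients of algebraic geometry: the fiber dimension theorem for dominant morphisms between irreducible varieties, and upper semi-continuity of fiber dimension (equivalently, lower semi-continuity of matrix rank). The unifying idea is that for a polynomial map the generic fiber dimension is controlled by the generic rank of the Jacobian, and semi-continuity lets a \emph{one-point} rank observation upgrade to a bound that holds on a Zariski-open set.

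For the vanilla test, I would let $r := \max_{q \in \mathbb{C}^N} \textup{rank} \, dF(q)$. Since rank is lower semi-continuous, $r$ is attained on a dense Zariski-open set $U \subseteq \mathbb{C}^N$, and on $U$ the fiber is smooth with tangent space $\ker dF(q)$ of dimension $N-r$, giving generic fiber dimension exactly $N-r$. The hypothesis $\textup{rank} \, dF(q_0) = N - d_1$ forces $r \geq N - d_1$, hence generic fiber dimension $\leq d_1$. Combined with the a priori lower bound $d_1$ coming from what is already known generically, we conclude equality (and incidentally that $q_0 \in U$).

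For the projected test, the key reduction is to express the generic projected-fiber dimension in Jacobian terms. Introduce the auxiliary map $G := (F, \pi')\colon \mathbb{C}^N \to \mathbb{C}^M \times \mathbb{C}^{N'}$, let $Z := \overline{G(\mathbb{C}^N)}$ and $Y := \overline{F(\mathbb{C}^N)}$, and note that under the natural projection $Z \to Y$ the fiber over $F(q)$ is exactly $\overline{\pi'(F_q)}$. Applying the fiber dimension theorem to both $F$ and to $Z \to Y$, the generic value of $\dim \overline{\pi'(F_q)}$ equals $\dim Z - \dim Y$, which simplifies to the generic value of
\begin{equation*}
\dim \ker dF(q) - \dim\bigl( \ker dF(q) \cap \ker \pi' \bigr) = \dim \pi'(\ker dF(q)).
\end{equation*}
The remaining step is to show that $q \mapsto \dim \pi'(\ker dF(q))$ is upper semi-continuous on $\mathbb{C}^N$. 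This I would verify by a Grassmannian limit argument: along any degeneration, the limit of $\ker dF(q)$ is contained in $\ker dF(q_0)$, hence the limit of $\pi'(\ker dF(q))$ is contained in $\pi'(\ker dF(q_0))$, so dimension can only jump up in the limit. With upper semi-continuity in hand, the observation $\dim \pi'(\ker dF(q_0)) = d_2$ forces the generic value to be $\leq d_2$, and combining with the a priori lower bound yields equality.

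For the specialized test, the argument is immediate: the restriction $F\vert_{\mathbb{C}^{N'} \oplus q_0''}\colon \mathbb{C}^{N'} \to \mathbb{C}^M$ is itself a polynomial map, and applying the vanilla test to it gives the claim directly. The main obstacle I anticipate is the upper semi-continuity in the projected case, since it mixes two separately lower semi-continuous ranks (of $dF(q)$ and of $dF(q)\vert_{\mathbb{C}^{N''}}$) whose \emph{difference} governs the answer; handling this via subspace limits in the Grassmannian, rather than via direct rank arithmetic, is what keeps the argument clean. The genericity claims throughout must also be sharpened to ``Zariski-generic,'' which follows because each semi-continuity condition cuts out a Zariski-open set and finitely many such sets intersect in another Zariski-open set.
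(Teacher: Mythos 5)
The paper itself does not prove Theorem~\ref{thm:jac}; it quotes it as a standard linearization tool with citations, so your proposal has to be judged on its own. Your treatment of the vanilla test (generic rank of $dF$ equals $\dim \overline{F(\mathbb{C}^N)}$ in characteristic zero, fiber dimension theorem, lower semi-continuity of rank plus the a priori bound $\geq d_1$) is correct up to minor imprecision about smoothness of the whole fiber, and the specialized test is indeed just the vanilla test for the restricted map. Your reduction of the projected test via $G=(F,\pi')$, giving that the generic projected fiber dimension equals the generic value of $\operatorname{rank} dG(q)-\operatorname{rank} dF(q)=\dim\pi'(\ker dF(q))$, is also the right key lemma.

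The genuine gap is the claimed upper semi-continuity of $q\mapsto \dim\pi'(\ker dF(q))$, which is false, and the Grassmannian limit argument offered for it does not work. The containment ``limit of $\pi'(\ker dF(q_t))$ is contained in $\pi'(\ker dF(q_0))$'' fails because the preimages in $\ker dF(q_t)$ of a convergent sequence $\pi'(v_t)$ may blow up; equivalently, $\dim\pi'(V)$ is \emph{lower}, not upper, semi-continuous in $V$ on the Grassmannian, and the quantity in question is a difference of two lower semi-continuous ranks, which has no one-sided semi-continuity in general. Concretely, take $N=2$, $F(x,y)=xy$, $\pi'(x,y)=x$, and $q_0=(0,1)$: then $\ker dF(q_0)=\{0\}\times\mathbb{C}$, so $\dim\pi'(\ker dF(q_0))=0$, yet the generic projected fiber $\overline{\pi'(\{xy=c\})}=\mathbb{C}$ has dimension $1$; with $d_2=0$ the test ``passes'' at this particular point while the conclusion is false. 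So the one-point-to-generic step cannot be rescued for an arbitrary particular $q_0$: the projected test is only valid at points where $\operatorname{rank} dF$ and $\operatorname{rank} dG$ both attain their generic (maximal) values, a dense Zariski-open condition that holds with probability one for randomly drawn $q_0$ --- which is exactly how the paper uses the test (see its remark that the Jacobian ranks take generic values at random points). Replacing your semi-continuity claim by this genericity requirement on $q_0$ completes the argument; as written, the projected case is not proved, and the vanilla-style ``any particular point suffices'' logic genuinely does not transfer to it.
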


\begin{footnotesize}
\begin{table}[ht]\setlength{\tabcolsep}{8pt}
    \centering
    \caption{Vanilla, projected and specialized Jacobian tests: these show that a system of polynomial equations generically has only finitely solutions.  Notation: $F : \mathbb{C}^{N} \rightarrow \mathbb{C}^{M}$ is a polynomial map, $\mathbb{C}^{N} = \mathbb{C}^{N'} \oplus \mathbb{C}^{N''}$ where $\pi'$, $\pi''$ are orthogonal projections onto the factors, and $d_1, d_2, d_3$ are the dimension bounds in Theorem~\ref{thm:jac}.}
    \resizebox{\textwidth}{!}{
    \begin{tabular}{l c  c  c  c }
    \toprule
         & \textup{polynomial map}  & \textup{arbitrary choices} & \textup{linearization} & \textup{rank check} \\
    \midrule
    \textup{vanilla} & $\mathbb{C}^{N} \overset{F}{\longrightarrow} \mathbb{C}^{M}$ & $q \in \mathbb{C}^{N}$ & $\mathbb{C}^{N} \overset{dF(q)}{\longrightarrow} \mathbb{C}^{M}$ & $\textup{rank}(dF(q)) = N-d_1$  \\
    \midrule
    \textup{projected} & \begin{tabular}{@{}c@{}} $\mathbb{C}^{N'} \oplus \mathbb{C}^{N''} =$ \\ $ \mathbb{C}^{N} \overset{F}{\longrightarrow} \mathbb{C}^{M} $\end{tabular}  & $q \in \mathbb{C}^{N}$ & \begin{tabular}{@{}c@{}} $\mathbb{C}^{N'} \subset \mathbb{C}^{N}$ \\ $ \overset{dF(q)}{\longrightarrow} \mathbb{C}^{M} $\end{tabular}  & $\dim \left( \pi' (\textup{ker} \, dF(q)) \right) = d_2$ \\
    \midrule
    \textup{specialized} & \begin{tabular}{@{}c@{}} $\mathbb{C}^{N'} \oplus \mathbb{C}^{N''} =$ \\ $ \mathbb{C}^{N} \overset{F}{\longrightarrow} \mathbb{C}^{M} $\end{tabular}  & \begin{tabular}{@{}c@{}} $q' \in \mathbb{C}^{N'}$  \\ $ q'' \in \mathbb{C}^{N''} $\end{tabular}  & \begin{tabular}{@{}c@{}} $\mathbb{C}^{N'} \oplus q''$ \\ $ \overset{d F (q' \oplus q'')}{\longrightarrow} \mathbb{C}^{M} $\end{tabular}  & \begin{footnotesize}\begin{tabular}{@{}c@{}} $\textup{rank } d \left( F \vert_{\mathbb{C}^{N'} \oplus \,  q''} \right)(q')$ \\ $ = N' - d_{3} $\end{tabular} \end{footnotesize} \\
    \bottomrule
    \end{tabular} }
    \label{tab:dimensions}
\end{table}
\end{footnotesize}

Several technical remarks are in order.  Firstly, in Theorem~\ref{thm:jac}, the fiber, projected fiber and specialized fiber are \textit{affine algebraic varieties} and hence a \textit{dimension} is defined for each of their \textit{irreducible components} according to \cite{CLO-book1}. The meaning of the theorem is that 
each component has dimension exactly $d_1, d_2, d_3$, respectively.
 Crucially, affine algebraic varieties have finitely many components.  Thus the theorem implies ``finitely many solutions'' up to symmetries, if the symmetries give $d_1, d_2, d_3$-dimensional ambiguities, respectively. 
 Secondly, ``generic'' in Theorem~\ref{thm:jac} is with respect to the \textit{Zariski topology}.  Concretely, there exists some polynomial $G$ on $\mathbb{C}^N$ such that for all $q \in \mathbb{C}^{N}$ with $G(q) \neq 0$ the implications in the theorem hold.  In particular, any property that holds generically holds on a Lebesgue full measure subset of points.
Thirdly, the Jacobian ranks in Theorem~\ref{thm:jac} take on generic values, as each minor of the relevant matrix is a polynomial in $q, q', q''$.  

Theorem~\ref{thm:jac} states rigorous conclusions if the Jacobian rank tests are \textit{passed}.
On the other hand, if the tests \textit{fail} for $q_0, q_0', q_0''$, and $q_0, q_0', q_0''$ were drawn randomly from \textit{any} continuous distribution on $\mathbb{F}^{N}$,
then by genericity of the Jacobian ranks, with probability $1$, the generic fibers, projected fibers, or specialized fibers of $F$ have dimension \textit{strictly} more than $d_1$, $d_2$, or $d_3$.

We applied the specialized test in subsection~\ref{sec:uni2} with $d_1=0$, the vanilla and projected tests in subsection~\ref{sec:uni3} with $d_1=d_2=3$ and the vanilla test in subsection~\ref{sec:uni4} with $d_1=3$.  The settings of $3$ reflect the fact, in the latter two subsections, that the fibers are $\textup{SO}(3)$-sets and we are interested in solutions modulo global rotation.   The bounds may be seen as instances of the orbit-stabilizer theorem, see \cite[Proposition~4.11]{bandeira2017estimation}.  
When the Jacobian rank tests were passed, this meant that, generically, there are only finitely many solutions up to global ambiguities.  

In practice, we ran the Jacobian tests in floating-point arithmetic and used SVD for robust rank estimation. Namely, we 
looked at multiplicative gaps between consecutive singular values, and regarded any gap exceeding a predefined threshold ($10^6$) as evidence that all
lower singular values should be regarded as zero.
While these computations fall short of a fully rigorous mathematical proof due to the possibility of rounding errors in floating-point arithmetic, it was typically evident which singular values ought to be counted as zero or non-zero. 

\bigskip
\bigskip

\end{document}